\documentclass[11pt]{article} 
\usepackage{amssymb}
\usepackage{amsthm,amsmath,amssymb}
\usepackage{amsthm,amsmath}

\font\cmrfootnote=cmr10 scaled 750
\font\cmrone=cmr10 scaled \magstep1
\font\cmrtwo=cmr10 scaled \magstep2
\font\cmrthree=cmr10 scaled \magstep3
\font\cmrfour=cmr10 scaled \magstep4
\font\cmrfive=cmr10 scaled \magstep5

\font\cmrscfootnote=cmr7 scaled 750
\font\cmrscone=cmr7 scaled \magstep1
\font\cmrsctwo=cmr7 scaled \magstep2
\font\cmrscthree=cmr7 scaled \magstep3
\font\cmrscfour=cmr7 scaled \magstep4
\font\cmrscfive=cmr7 scaled \magstep5

\font\cmrscscfootnote=cmr5 scaled 750
\font\cmrscscone=cmr5 scaled \magstep1
\font\cmrscsctwo=cmr5 scaled \magstep2
\font\cmrscscthree=cmr5 scaled \magstep3
\font\cmrscscfour=cmr5 scaled \magstep4
\font\cmrscscfive=cmr5 scaled \magstep5

\font\mitfootnote=cmmi10 scaled 750
\font\mitone=cmmi10 scaled \magstep1
\font\mittwo=cmmi10 scaled \magstep2
\font\mitthree=cmmi10 scaled \magstep3
\font\mitfour=cmmi10 scaled \magstep4
\font\mitfive=cmmi10 scaled \magstep5

\font\mitscfootnote=cmmi7 scaled 750
\font\mitscone=cmmi7 scaled \magstep1
\font\mitsctwo=cmmi7 scaled \magstep2
\font\mitscthree=cmmi7 scaled \magstep3
\font\mitscfour=cmmi7 scaled \magstep4
\font\mitscfive=cmmi7 scaled \magstep5

\font\mitscscfootnote=cmmi5 scaled 750
\font\mitscscone=cmmi5 scaled \magstep1
\font\mitscsctwo=cmmi5 scaled \magstep2
\font\mitscscthree=cmmi5 scaled \magstep3
\font\mitscscfour=cmmi5 scaled \magstep4
\font\mitscscfive=cmmi5 scaled \magstep5

\font\cmsyfootnote=cmsy10 scaled 750
\font\cmsyone=cmsy10 scaled \magstep1
\font\cmsytwo=cmsy10 scaled \magstep2
\font\cmsythree=cmsy10 scaled \magstep3
\font\cmsyfour=cmsy10 scaled \magstep4
\font\cmsyfive=cmsy10 scaled \magstep5

\font\cmsyscfootnote=cmsy7 scaled 750
\font\cmsyscone=cmsy7 scaled \magstep1
\font\cmsysctwo=cmsy7 scaled \magstep2
\font\cmsyscthree=cmsy7 scaled \magstep3
\font\cmsyscfour=cmsy7 scaled \magstep4
\font\cmsyscfive=cmsy7 scaled \magstep5

\font\cmsyscscfootnote=cmsy5 scaled 750
\font\cmsyscscone=cmsy5 scaled \magstep1
\font\cmsyscsctwo=cmsy5 scaled \magstep2
\font\cmsyscscthree=cmsy5 scaled \magstep3
\font\cmsyscscfour=cmsy5 scaled \magstep4
\font\cmsyscscfive=cmsy5 scaled \magstep5

\font\cmexfootnote=cmex10 scaled 750
\font\cmexone=cmex10 scaled \magstep1
\font\cmextwo=cmex10 scaled \magstep2
\font\cmexthree=cmex10 scaled \magstep3
\font\cmexfour=cmex10 scaled \magstep4
\font\cmexfive=cmex10 scaled \magstep5

\font\cmexscfootnote=cmex10 scaled 750
\font\cmexscone=cmex10 scaled \magstep1
\font\cmexsctwo=cmex10 scaled \magstep2
\font\cmexscthree=cmex10 scaled \magstep3
\font\cmexscfour=cmex10 scaled \magstep4
\font\cmexscfive=cmex10 scaled \magstep5

\font\cmexscscfootnote=cmex7 scaled 750
\font\cmexscscone=cmex7 scaled \magstephalf       
\font\cmexscsctwo=cmex7 scaled \magstep1
\font\cmexscscthree=cmex7 scaled \magstep2
\font\cmexscscfour=cmex7 scaled \magstep3
\font\cmexscscfive=cmex7 scaled \magstep4

\def\mathfootnote{\textfont0=\cmrfootnote \textfont1=\mitfootnote \textfont2=\cmsyfootnote \textfont3=\cmexfootnote
        \scriptfont0=\cmrscfootnote \scriptscriptfont0=\cmrscscfootnote \scriptfont1=\mitscfootnote \scriptscriptfont1=\mitscscfootnote
        \scriptfont2=\cmsyscfootnote \scriptscriptfont2=\cmsyscscfootnote \scriptfont3=\cmexscfootnote \scriptscriptfont3=\cmexscscfootnote}

\def\mathone{\textfont0=\cmrone \textfont1=\mitone \textfont2=\cmsyone \textfont3=\cmexone
        \scriptfont0=\cmrscone \scriptscriptfont0=\cmrscscone \scriptfont1=\mitscone \scriptscriptfont1=\mitscscone
        \scriptfont2=\cmsyscone \scriptscriptfont2=\cmsyscscone \scriptfont3=\cmexscone \scriptscriptfont3=\cmexscscone}
\def\mathtwo{\textfont0=\cmrtwo \textfont1=\mittwo \textfont2=\cmsytwo \textfont3=\cmextwo
        \scriptfont0=\cmrsctwo \scriptscriptfont0=\cmrscsctwo \scriptfont1=\mitsctwo \scriptscriptfont1=\mitscsctwo
        \scriptfont2=\cmsysctwo \scriptscriptfont2=\cmsyscsctwo \scriptfont3=\cmexsctwo \scriptscriptfont3=\cmexscsctwo}
\def\maththree{\textfont0=\cmrthree \textfont1=\mitthree \textfont2=\cmsythree \textfont3=\cmexthree
        \scriptfont0=\cmrscthree \scriptscriptfont0=\cmrscscthree \scriptfont1=\mitscthree \scriptscriptfont1=\mitscscthree
   \scriptfont2=\cmsyscthree \scriptscriptfont2=\cmsyscscthree \scriptfont3=\cmexscthree \scriptscriptfont3=\cmexscscthree}
\def\mathfour{\textfont0=\cmrfour \textfont1=\mitfour \textfont2=\cmsyfour \textfont3=\cmexfour
        \scriptfont0=\cmrscfour \scriptscriptfont0=\cmrscscfour \scriptfont1=\mitscfour \scriptscriptfont1=\mitscscfour
        \scriptfont2=\cmsyscfour \scriptscriptfont2=\cmsyscscfour \scriptfont3=\cmexscfour \scriptscriptfont3=\cmexscscfour}
\def\mathfive{\textfont0=\cmrfive \textfont1=\mitfive \textfont2=\cmsyfive \textfont3=\cmexfive
        \scriptfont0=\cmrscfive \scriptscriptfont0=\cmrscscfive \scriptfont1=\mitscfive \scriptscriptfont1=\mitscscfive
        \scriptfont2=\cmsyscfive \scriptscriptfont2=\cmsyscscfive \scriptfont3=\cmexscfive \scriptscriptfont3=\cmexscscfive}

\def\1m{\mathone}
\def\2m{\mathtwo}
\def\3m{\maththree}
\def\4m{\mathfour}
\def\5m{\mathfive}

\font\rmfoot=cmr8

\textheight=230mm
\textwidth=158mm
\oddsidemargin=-0mm
\evensidemargin=-35mm
 \hoffset.15cm
 \voffset-0.5cm

\textheight=230mm
\textwidth=158mm
\topmargin=-10mm
\oddsidemargin=-0mm
\evensidemargin=-35mm
 \hoffset.32cm
 \voffset-0.5cm

\usepackage{amscd, amssymb}
\usepackage{hyperref}
\usepackage{comment}
\usepackage{enumitem}

\newtheorem{theorem}{Theorem}[section]
\newtheorem{lemma}[theorem]{Lemma}
\newtheorem{prop}[theorem]{Proposition}
\newtheorem{corollary}[theorem]{Corollary}
\newtheorem{cor}[theorem]{Corollary}
\newtheorem{conjecture}[theorem]{{Conjecture}}

\newtheorem{claim}[theorem]{{Claim}}

\theoremstyle{remark}
\newtheorem{remark}[theorem]{Remark}

\theoremstyle{definition}

\newtheorem{definition}[theorem]{{Definition}}

\newtheorem{exer}[theorem]{Exercise}

\def\bclaim{\begin{claim}}
\def\eclaim{\end{claim}}
\def\bdefin{\begin{definition}}
\def\edefin{\end{definition}}
\def\bcor{\begin{corollary}}
\def\ecor{\end{corollary}}
\def\bthm{\begin{theorem}}
\def\ethm{\end{theorem}}
\def\bconj{\begin{conjecture}}
\def\econj{\end{conjecture}}
\def\blem{\begin{lemma}}
\def\elem{\end{lemma}}
\def\blemma{\begin{lemma}}
\def\elemma{\end{lemma}}
\def\bprop{\begin{prop}}
\def\eprop{\end{prop}}
\def\bremark{\begin{remark}}
\def\eremark{\end{remark}}

\newcommand{\RR}{\mathbb R}

\newcommand{\NN}{\mathbb N}
\newcommand{\ZZ}{\mathbb Z}

\newcommand{\TT}{\mathbb T}

\hyphenation{Wel-schin-ger}
\hyphenation{pa-ram-e-ter-ized}
\DeclareMathOperator{\id}{Id}

\newcommand{\ra}{\right\rangle}

\def\max{{\operatorname{max}}}

\def\q{\quad}
\def\qq{\qquad}

\def\bpf{\begin{proof}}
\def\epf{\end{proof}}
\def\beq{\begin{equation}}
\def\eeq{\end{equation}}
\def\beqno{\begin{equation*}}
\def\eeqno{\end{equation*}}
\def\eaeq{\end{aligned}}
\def\baeq{\begin{aligned}}

\def\del{\partial}

\def\h#1{\hbox{#1}}

\def\wt{\widetilde}
\def\lb{\label}

\def\th{\theta}

\def\del{\partial}

\def\ra{\rightarrow}
\def\eps{\epsilon}
\def\del{\partial}

\def\w{\wedge}

\def\beq{\begin{equation}}
\def\eeq{\end{equation}}
\def\bi#1{\bibitem{#1}}

\def\h#1{\hbox{#1}}

\def\calM{{\mathcal M}}

\def\calX{{\mathcal X}}
\def\calA{{\mathcal A}}
\def\calY{{\mathcal Y}}
\def\calB{{\mathcal B}}

\def\gr{{\operatorname{gr}}}

\def\supp{\h{\rm supp}}




















\def\bs{\bigskip}
\def\MA{\operatorname{MA}}

\def\be{\beta}
\def\per{\operatorname{per}}

\def\strop{\operatorname{strop}}

\def\tsper{\per_{\strop}}
\def\supp{\operatorname{supp}}

\def\mathoverr#1#2{\buildrel #1 \over #2}
\def\Ent{\h{\rm Ent}}
\def\pstar{p^\star}
\def\ul#1{\underline{#1}}
\def\Cvx{\operatorname{Cvx}}
\def\bu{$\bullet\,\,$}

\def\MAeq{Monge--Amp\`ere equation }
\def\MAeqs{Monge--Amp\`ere equations }

\font\itnotsosml=cmti7

\def\thhit#1{\hbox{${\hbox{#1}}^{\,{\hbox{\itnotsosml th}}}$}}



\begin{document}
\title{On large deviation principles and the Monge--Amp\`ere equation 
(following Berman, Hultgren)}
\author{Yanir A. Rubinstein}
\date{8 June 2022}

\maketitle

\centerline{\it Dedicated to Steve Zelditch on the occasion of his 
6\thhit{8} birthday  }

\begin{abstract}
This is mostly an exposition, aimed to be accessible to geometers,
analysts, and probabilists, of a fundamental recent theorem of R. Berman
with recent developments by J. Hultgren,
that asserts that the second boundary value problem 
for the real Monge--Amp\`ere equation admits a
probabilistic interpretation, in terms of many particle limit 
of permanental point processes satisfying a large deviation principle with a rate function given explicitly using optimal transport.
An alternative proof of a step in the Berman--Hultgren Theorem
is presented allowing to to deal with all ``tempratures" simultaneously 
instead of first reducing to the zero-temperature case.

\end{abstract}

\section{Introduction}

The purpose of this exposition
is to present one particularly beautiful connection between the 
\MAeq and probability, specifically, a large deviation principle,
discovered by Berman. 
Since 
the original work by Berman is still unpublished \cite{Berm},
and moreover deals with the more technically involved
case where the gradient image is a polytope (that arises from 
toric varieties), it
seemed more pedagogical to give an exposition that
concentrates on subsequent work of Hultgren \cite{Hult} 
that elaborates Berman's ideas in the case the gradient image
has no boundary (that arises from Abelian varieties) as 
many of the key ideas are present already in the latter setting. 

It is worth pointing out that while the lack of boundary is a simplification,
Hultgren beautifully deals with a different set of technicalities that
arises in the Abelian setting that is absent from Berman's toric 
setting: theta function analysis. 

We completely skip the connection
to Abelian varieties in this exposition as our goal was to present
strictly the Monge--Amp\`ere to LDP connection, stripping away the
underlying geometry.
Our exposition culminates in Theorem \ref{hultthm}, due to Hultgren.

We take the opportunity to present an alternative proof 
to Theorem \ref{hultthm} that deals with all ``tempratures" simultaneously instead
of first reducing to the zero-temperature case as in the work of Berman and Hultgren
(cf. \cite[Remark 24, p. 59]{HultThesis}).
Basically this amounts to replacing an application of the 
G\"artner--Ellis theorem with a direct computation (we still use
G\"artner--Ellis theorem in several other places).
The proof we present culminates in \S\ref{FinDimWasSubSec}
and is  self-contained in the sense that we present essentially all the basic 
prerequisites from large deviations theory and optimal transport. 
 
The family of \MAeqs Berman originally considers actually corresponds to
and is inspired by the Ricci continuity method introduced by the author
in 2008 \cite{R08} in connection with the Ricci flow and the search for 
K\"ahler--Einstein metrics.
The idea there, explained in detail in the survey \cite[\S6]{R}, is to extend
Aubin's continuity method originally defined for parameter values $t\in[0,1]$
all the way `back' in time to $t\in(-\infty,1]$. This is motivated
by the Ricci flow \cite[\S3]{R08},\cite[\S6]{R} and is exploited heavily
in subsequent work on existence of singular K\"ahler--Einstein metrics where the standard continuity
method of Aubin cannot be readily used, but where the asymptotic analysis
of the limit $t\ra-\infty$ allows to bypass the difficulty in getting the continuity method `started' \cite[\S9]{JMR}. Berman \cite{Berm} discovered a physics interpretation for this analytical gadget where the temperature corresponds 
precisely to $-1/t$, and so the limit $t\ra-\infty$ becomes for him 
a `zero-temperature limit'. Making this connection to physics proved extremely fruitful
as it led Berman to several observations, including the LDP result we describe in this survey.

\bigskip
\noindent
{\bf Goal of present work.} 
The purpose of these lectures is to give a detailed exposition of
some of Berman's ideas \cite{Berm} in the setting of Hultgren's work \cite{Hult}
hopefully with some simplification (in particular the alternative proof
mentioned above). 
We give  some additional background in probability, hopefully to
allow the dissemination of this beautiful piece of mathematics to
a wider audience, given that the necessary background from probability
might not be standard for most students in geometric analysis.
We learned the little probability that we were able to present here from reading
Berman and Hultgren \cite{Berm,Hult} as well as using
the classic reference of Dembo--Zeitouni \cite{DemboZ} and 
the more recent textbook of Rassoul-Agha--Sepp\"al\"ainen \cite{ras}
where thorough, and probably more accurate, presentations of the results in 
\S\ref{LDPSec}--\S\ref{LDPwithoutSec} can be found. For the results on optimal
transport our main reference is Ambrosio--Gigli \cite{AmbGig}.

\bigskip
\noindent
{\bf Organization.} 
We start by giving some anecdotes from the history
of relations between \MAeqs and probability in \S\ref{MAprobSec}. 
This by no means is even an attempt at an exhaustive historical
account. Rather, it is for the sake of placing the idea of Berman in 
a broader historical context. Section \ref{LDPSec} serves as
a gentle crash (oxymoron alert) course on large deviation principles (LDP).
Section \ref{MomentSec} discusses moment generating functions and
the associated LDP with rate function coming from the Legendre transform
of the function: this is the G\"artner--Ellis Theorem.
Section \ref{GESec} completes the proof of the G\"artner--Ellis Theorem.
Section \ref{LDPwithoutSec} discusses a general criterion for the existence
of an LDP without using a moment generating function.
Section \ref{OTSec} briefly reviews the fundamentals of optimal transportation,
and computes, following Berman \cite{Berm}, the Legendre transform of Wasserstein distance as well
as identifies the candidate rate function for a family of Monge--Amp\`ere
equations \eqref{maeq} related to the Ricci continuity method \cite{R08,R}.
Section \ref{MomentMASec} presents the proof of Berman--Hultgren's Theorem
\ref{hultthm}, showing an LDP for a sequence of empirical measures
arising from theta functions on Abelian varieties (although we do not
go into any of the underlying complex geometry, which is beautifully
presented in Hultgren's work and in fact is one of the novelties
of his work \cite{Hult}). The rate function is related to optimal
transport, and the whole construction is intimately related to solutions
of the ``master equation" \eqref{maeq}. 
Most of Section \ref{MomentMASec} is devoted to 
our approach described above to the proof of Theorem
\ref{hultthm}, culminating in  \S\ref{FinDimWasSubSec},
and in \S\ref{AltProofSubSec} we present the original proof
of Berman and Hultgren, and briefly compare the two approaches.

\section{Monge--Amp\`ere and probability}
\label{MAprobSec}

The Laplace and Poisson equations have myriad 
probabilistic connections and interpretations, e.g., through Brownian motion, eigenfunctions, nodal sets \cite{Chung, KS96, Zelditch}.
Being the higher-dimensional analogue of these equations, one
would expect similar, albeit more complicated, relations between
the (homogeneous or non-homogeneous) \MAeq and probability.

Perhaps Gaveau was the first to pioneer such relations, when he 
discovered that the solution to the complex \MAeq can be
expressed as the value function of a stochastic
optimal control problem and found a semi-group that
can be studied in relation to a parabolic version of 
the \MAeq  \cite{Gav1,Gav2,Gav3}.
This generalized the classical probabilistic representation of the
solution of the Laplace equation in one complex dimension.
Another fundamental relation was discovered by Krylov
who proved $C^{1,1}$ a priori estimates for the real Monge--Amp\`ere
equation, among other results
 \cite{Kry89,Kry87,Kry89b,Kry90}.
We refer to Delarue \cite{Delarue} for excellent lecture notes 
that survey, expand, and give a pedagogical point of view on both
Gaveau's and Krylov's achievements
(and also cover the complex case for the latter).
Another type of relation between Monge--Amp\`ere and probability
arises in the theory of optimal transportation
(see, e.g., Villani \cite{Villani,Villani:oldandnew})
where one seeks
a map pushing forward one probability measure into another. 
Indeed, the optimal (cost-minimizing) 
map can be expressed as the gradient of a convex function and the
push-forward equation becomes a real Monge--Amp\`ere equation.

Another classical connection appears through the theory of Markov semigroups,
which in turn are closely related to the heat kernel.
In this context hypercontractivity plays a central role and
leads to (logarithmic and regular) Sobolev inequalities. 
This goes back to work of Gross \cite{Gross}. For our anecdotal storytelling 
we mention that Bakry and Bakry--Ledoux \cite{Bakry,BakryLedoux} 
showed how to use these
ideas to establish Sobolev and diameter estimates in the presence
of positive Ricci curvature. This was then applied in the setting
of a degenerate complex \MAeq 
 \cite[Proposition 6.2]{JMR} to by-pass standard ``Riemannian" proofs that 
do not readily apply in the degenerate setting.

A spectacular relation between \MAeqs and probability was discovered
by Zelditch who together with collaborators studied several instances where large deviation principles (LDP) make their appearance in complex geometry
\cite{ZeitZeld1,Zeld2,FengZeld}. In particular, Song--Zelditch found
a large deviation principle that underlies the canonical 
Bergman approximation scheme 
\cite{Don,PS}
for 
the Cauchy problem for the homogeneous real \MAeq
(this equation governs initial value geodesics in the space of K\"ahler metrics 
with toric symmetry) \cite{SongZelditch}.

Berman subsequently discovered that an LDP holds
also in the quite distinct setting of the second 
boundary value problem for non-homogeneous real \MAeq \cite{Berm}
that appears naturally in the setting of toric K\"ahler manifolds
as well as optimal transport. Since the gradient image in this
setting is a polytope, there are issues with the corners and the boundary
that render the computations more involved.
For that reason, the subsequent follow-up work of Hultgren is more
appropriate for our exposition, as in the setting of Abelian varieties
that he studies the gradient image is a torus, while the main
features of Berman's work are still present. The sections of the 
line bundles over Abelian varieties, theta functions, 
are more complicated to represent than the simple
toric monomials that appear in the case of toric
varieties, but that is not a steep price to pay for
the lack of boundary.
Although outside the scope of these notes, we mention that
Berman also discovered an LDP in a sort of 
complex version of his aforementioned toric result in the case $\beta>0$,
where the toric variety is replaced by a polarized complex manifold and the role of the permanental point process is played by a determinantal point process
\cite{BermanCMP}.

\section{Large deviation principles}
\label{LDPSec}

We will be interested in asymptotic behavior, or more precisely, the asymptotic concentration, of a sequence of probability spaces
$$
(\calX,\calA,\mu_n),
$$
indexed by $n\in\NN$.
Here,
$(\calX,\calA)$ is
a measure space, i.e., $\calX$ is a set, also called the sample space, consisting
of all possible outcomes, and $\calA$ is a $\sigma$-algebra (the collection of measurable sets in $\calX$), and
$\{\mu_n\}$ are probability measures on $(\calX,\calA)$, i.e., functions $\mu_n:\calA\ra[0, \infty ]$ satisfying $\mu_n (\emptyset) = 0$ and $\mu_n (\cup_i A_i)=\sum_i \mu_n ( A_i)$ whenever $A_k\cap A_l=\emptyset$ for all $k,l$, and with total mass $\mu_n (\calX ) = 1$. The last property is what makes a general measure a probability measure.
Often, it is customary to omit $\calA $ from the notation and refer to the triple 
$(\calX,\calA,\mu_n)$ simply by
the notation $\mu_n
\in
 P(\calX)$,  where $P(\calX)$ denotes the space of probability measures on $(\calX,\calA)$.

\begin{definition}
\label{LDPDef}
We say that 
$
\{(\calX,\calA,\mu_n)\}_n
$
(or, for brevity, sometimes just $\{\mu_n
\}_n) $ satisfies a large deviation principle with normalization  $r_n\nearrow\infty$ (and denote this statement by LDP$(\mu_n, r_n)$) if
 there exists a lower semicontinuous function $I:\calX \ra [0, \infty ] $ such that
$$
\liminf_{n\ra\infty}\frac1{r_n} \log \mu_n (O)\ge -\inf_O I, \qq \forall\; O \h{\ \ open in \ } \calX,  
$$
and
$$
\limsup_{n\ra\infty}\frac1{r_n} \log \mu_n (C)\le -\inf_C I, \qq \forall\; C \h{\ \ closed in \ } \calX  .
$$\end{definition}

Under mild assumptions on $\calX$, 
there is actually no ambiguity in the {\it rate function} $I$ \cite[Theorem 2.13]{ras}.
Here is where the stipulation that the rate function be lower semicontinuous is relevant.

\begin{lemma}
\label{ILemma}
Suppose $\calX  $ is a regular topological space. Then,
$$
I(x)=\sup\Big\{-\liminf\frac1{r_n}\log \mu_n(O): O\ni x, O \h{\ \ is an open set in  } \calX\Big\}.
$$
\end{lemma}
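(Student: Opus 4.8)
The plan is to establish the two inequalities separately. Write $J(x)$ for the right-hand side, $J(x) := \sup\{-\liminf_{n\to\infty}\frac1{r_n}\log\mu_n(O) : O \text{ open in } \calX,\ O \ni x\}$; the claim is that $I(x) = J(x)$, where $I$ is the (lower semicontinuous) rate function supplied by the LDP hypothesis.

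The inequality $J(x) \le I(x)$ uses only the lower (open-set) bound in Definition \ref{LDPDef} and needs no topological hypothesis: if $O$ is open with $x \in O$, then $\inf_O I \le I(x)$, so $\liminf_{n\to\infty}\frac1{r_n}\log\mu_n(O) \ge -\inf_O I \ge -I(x)$, that is, $-\liminf_{n\to\infty}\frac1{r_n}\log\mu_n(O) \le I(x)$; taking the supremum over all such $O$ gives $J(x) \le I(x)$.

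For the reverse inequality $I(x) \le J(x)$ I would argue by contradiction. Suppose $J(x) < I(x)$ and choose $\alpha \in \RR$ with $J(x) < \alpha < I(x)$ (possible regardless of whether $I(x)$ is finite). Since $I$ is lower semicontinuous, the set $U := \{y \in \calX : I(y) > \alpha\}$ is open and contains $x$, and $\inf_U I \ge \alpha$. This is where regularity of $\calX$ enters: in a regular space every open neighborhood of a point contains a closed neighborhood of that point, so there is an open set $O$ with $x \in O \subseteq \overline O \subseteq U$. Now $\overline O$ is closed, so the upper (closed-set) bound of the LDP gives $\limsup_{n\to\infty}\frac1{r_n}\log\mu_n(\overline O) \le -\inf_{\overline O} I \le -\inf_U I \le -\alpha$; since $\mu_n(O) \le \mu_n(\overline O)$ and $\liminf \le \limsup$, this forces $\liminf_{n\to\infty}\frac1{r_n}\log\mu_n(O) \le -\alpha$, hence $-\liminf_{n\to\infty}\frac1{r_n}\log\mu_n(O) \ge \alpha$. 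But $O$ is one of the open neighborhoods of $x$ over which the supremum defining $J(x)$ ranges, so $J(x) \ge \alpha$, contradicting $\alpha > J(x)$. Therefore $I(x) \le J(x)$, which combined with the previous paragraph proves the lemma.

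The only genuinely delicate point is the passage just described: lower semicontinuity alone produces an \emph{open} neighborhood $U$ of $x$ on which $I > \alpha$, but to invoke the closed-set half of the LDP one needs a set inside $U$ whose closure is still contained in $U$, and that is exactly what regularity provides; lower semicontinuity is what makes $U$ open in the first place (and, conceptually, what prevents the right-hand side from computing merely the lower semicontinuous envelope of $I$). I would also remark that the argument for $I(x) \le J(x)$ refers to $I$ only through the LDP bounds, so it simultaneously establishes the uniqueness of the rate function mentioned immediately before the lemma.
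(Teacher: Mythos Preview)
Your proof is correct and follows essentially the same approach as the paper's: both split into the two inequalities, obtain $J\le I$ directly from the open-set LDP bound, and for $I\le J$ use lower semicontinuity to produce the open set $\{I>\alpha\}$ and regularity to sit a closed neighborhood of $x$ inside it before invoking the closed-set LDP bound. The only cosmetic differences are notation (the paper calls the rate function $F$ and the right-hand side $I$) and that the paper phrases the second half as a direct argument (``fix $c<F(x)$ and show $c\le I(x)$'') rather than by contradiction.
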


\begin{remark} {\rm
\label{}
For a sort of converse see Proposition \ref{ldpprop}. 
} \end{remark}

\begin{remark} {\rm
\label{}
The definition of a regular topological space will
be given in the proof shortly. 
} \end{remark}

\begin{proof}
Define $I$ by the above formula.
Suppose that LDP$(\mu_n, r_n)$ holds with rate function $F$. 
By Definition \ref{LDPDef}, whenever $O \h{\ \ is an open set in  } \calX$ with  $O\ni x$,
$$
F (x)\ge \inf_O F\ge 
-\liminf\frac1{r_n}\log \mu_n(O).
$$
Taking the supremum over all such $O$ does not change the left
hand side, while the right hand side becomes $I(x)$. Thus,
$F\ge I$.

Conversely, fix $x $ and let $c$ be such that $c<F(x)$. It suffices to show that 
$c\le I(x) $. Indeed, the assumption on $\calX $ means that any point can be separated from any closed set not containing it by means of disjoint open sets. Thus, we can separate $x $ from the closed set $\{F\le c\}\not\ni x$, i.e., choose open $G\ni x$ with
$\overline {G}\cap \{F\le c\}=\emptyset$, i.e., 
$\overline {G}\subset \{F> c\}$. 
Now,
since $F$ is lower semicontinuous its inf over any closed set is attained.
In particular, $\inf_{\overline {G}} F\ge c$. (Note that we could not otherwise conclude
$c \le \inf_{\{F> c\}} F$!)
Thus, $$
c
\le \inf_{\overline {G}} F\le -\limsup\frac1{r_n}\log 
\mu_n (\overline {G})\le -\liminf\frac1{r_n}\log \mu_n ({G}) \le I (x),
$$
so $F(x)\le I(x)$, concluding the proof.
\end{proof}

The fact that the function $I $ is nonnegative is crucial: it means probability of events (an event is an element of $\calA$) is exponentially decaying in general,
with {\it rate} 
$$
\h{
`$r_n\times$ infimum of $I$ over the closure of the event'.
}
$$ 
Of particular interest are therefore the zeros of the rate function, i.e., the events $I^{-1}(0)$ (when this set is non-empty; it is always non-empty if
the rate function is {\it good} \cite[p. 4]{DemboZ}, i.e., $I$ has
compact sub-level sets in $\calX$).
The significance of zeros is nicely captured in terms of random variables, which we now turn to discuss.
\begin{remark} {\rm
\label{zeroremark}
Note that by setting $O = C =\calX  $ it follows that $\inf I=0$.
} \end{remark}

\bs
\centerline{$\star$\hglue1cm$\star$\hglue1cm$\star$}
\bs

Probability measures and random variables can often be interchanged in the discussion, and by abuse of terminology this will sometimes be the case. Let us briefly discuss the terminology involved.
Let $(\calX,\calA,\mu)$ be a probability space and let $(\calY,\calB)$ be a measure space (a typical example is $\RR$ with $\calB$
being the usual Borel sets).
A  {\it random variable } with values in $\calY $ is a measurable 
function $X:\calX \ra  \calY$ (here, measurable takes into account both $\calA $ and $\calB $).
To such an $X$ one may associate a probability space
$
(\calY,\calB,\nu)
$
defined by $$\nu (B): =\mu(\{x\in\calX\,:\,X(x)\in B\}), \h{\ \ for $B\in\calB$}
$$
(the previous formula is often written, with some abuse,
as $\nu (B): =\mu\{X\in B\}$). 
In other words, $\nu (B):=\mu(X^{-1}(B))$, or,
$$
\nu = X_{\#}\mu.
$$
One also refers to $\nu$ as the {\it law  } of $X$.
Note that often when discussing the random variable $X$ the ``background space" 
$(\calX,\calA,\mu)$ is completely auxiliary/irrelevant since one is completely focused on $(\calY,\calB,\nu)$.
Thus, often one does not distinguish between $X$ and 
 the resulting or pushed-forward measure $\nu$.

\bs
\centerline{$\star$\hglue1cm$\star$\hglue1cm$\star$}
\bs

When the sequence of probability spaces
$$
(\calX,\calA,\mu_n)
$$
happen to
correspond to the laws of a sequence of random variables $X_n$ (on some auxiliary probability space $(\widetilde\calX,\widetilde\calA,\widetilde\mu)$) with values in \hbox{$\calX $}, the zeros of the rate function have the following meaning. We say that
$x\in \hbox{$\calX $ } $ is a limiting value of
$\{X_n\}$ with probability $c$ if  
$$
\lim_n\widetilde\mu (\{|X_n-x|<\eps\})
=
\lim_n\mu_n(B_\eps(x))=
c,
$$ for all $\eps>0$ small. 
 
\begin{lemma}
\label{Lemma}
Consider a sequence of random variables $X_n$ whose laws $\mu_n$ satisfy
LDP$(\mu_n, r_n)$.
Suppose that $x\in \hbox{$\calX $ } $ is a limiting value of
$\{X_n\}$ with probability $c>0$ (independent of $n$). Then $x\in I^{-1}(0)$.
In fact, $x\in I^{-1}(0)$ whenever $\frac1{r_n}\log\mu_n (\{|X_n-x|<\eps\})\ra 0$.
\end{lemma}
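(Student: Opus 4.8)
The plan is first to note that the second assertion implies the first: if $x$ is a limiting value of $\{X_n\}$ with probability $c>0$, then $\mu_n(B_\eps(x))\to c$ for every small $\eps>0$, so $\log\mu_n(B_\eps(x))\to\log c\in\RR$, and since $r_n\nearrow\infty$ this forces $\frac1{r_n}\log\mu_n(B_\eps(x))\to0$. Hence it suffices to prove: if $\frac1{r_n}\log\mu_n(B_\eps(x))\to0$ for all small $\eps>0$, then $I(x)=0$. Since a rate function takes values in $[0,\infty]$ we already have $I(x)\ge0$, so everything comes down to proving $I(x)\le0$.

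The key realization is that the LDP \emph{lower} bound is useless here: applied to the open ball $O=B_\eps(x)$ it only yields $\inf_{B_\eps(x)}I\ge-\liminf\frac1{r_n}\log\mu_n(B_\eps(x))=0$, which is vacuous. One must instead feed the hypothesis into the \emph{upper} bound. So the first step is to apply the closed-set estimate of Definition \ref{LDPDef} to $C=\overline{B_\eps(x)}\supseteq B_\eps(x)$ and combine it with the monotonicity of $\mu_n$:
$$
0=\lim_{n\to\infty}\frac1{r_n}\log\mu_n(B_\eps(x))\le\limsup_{n\to\infty}\frac1{r_n}\log\mu_n(\overline{B_\eps(x)})\le-\inf_{\overline{B_\eps(x)}}I ,
$$
which (together with $I\ge0$) gives $\inf_{\overline{B_\eps(x)}}I=0$ for every small $\eps>0$. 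The second step is to let $\eps\to0$: choosing for each $m\in\NN$ a point $y_m\in\overline{B_{1/m}(x)}$ with $I(y_m)<1/m$ produces a sequence $y_m\to x$ with $I(y_m)\to0$, and lower semicontinuity of $I$ — part of the definition of a rate function — gives $I(x)\le\liminf_m I(y_m)=0$. Hence $I(x)=0$.

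There is no serious obstacle; the only points to be careful about are (i) resisting the temptation to invoke the lower bound, which carries no information in this direction, and (ii) that one cannot read off $I(x)=0$ directly from $\inf_{\overline{B_\eps(x)}}I=0$ without using lower semicontinuity in the limit $\eps\to0$. When $\calX$ is regular the last step can instead be replaced by an appeal to Lemma \ref{ILemma}: any open $O\ni x$ contains some $B_\eps(x)$, so $\frac1{r_n}\log\mu_n(O)$ is squeezed between $0$ (from $\mu_n(O)\le1$) and $\frac1{r_n}\log\mu_n(B_\eps(x))\to0$, whence $-\liminf\frac1{r_n}\log\mu_n(O)=0$ for all such $O$ and the supremum defining $I(x)$ equals $0$.
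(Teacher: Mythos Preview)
Your proof is correct and follows essentially the same approach as the paper: apply the LDP upper bound to the closed ball $\overline{B_\eps(x)}$ to obtain $\inf_{\overline{B_\eps(x)}}I\le0$, then let $\eps\to0$ and invoke lower semicontinuity together with $I\ge0$. Your write-up is simply more detailed---you make explicit the reduction of the first assertion to the second and spell out the squeezing/limiting steps that the paper compresses into a single displayed line---and you add the (optional) alternative ending via Lemma~\ref{ILemma}, but the core argument is identical.
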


In other words, $x\in I^{-1}(0)$ whenever 
$x\in \hbox{$\calX $ } $ is a ``limiting value of
$X_n$ with probability decaying slower than $e^{-Cr_n}$ (for some $C>0$)."

\begin{proof}
One has
$$
 \inf_{\overline {B_\eps(x)}} I
\le -\limsup\frac1{r_n}\log \mu_n(X_n\in \overline {B_\eps(x)})
= -\limsup\frac1{r_n}\log \widetilde\mu_n(X_n^{-1}(\overline {B_\eps(x)}))
=0.$$
Letting $\epsilon \ra  0$ and using $I\ge0$ and lower semicontinuity guarantees $I(x)=0$ since $I\ge0$.
\end{proof}

One particular situation of practical interest is when $I^{-1}(0)$ is a singleton.
In that case a sort of converse of the previous statement holds.

\begin{corollary}
\label{weakconvcor}
Suppose that the sequence of probability spaces
$
(\calX,\calA,\mu_n)
$ satisfies
LDP$(\mu_n, r_n)$ with a good rate function $I$ (i.e., $I$ has
compact sub-level sets in $\calX$) and that $I^{-1}(0)=\{x\}
\subset\hbox{$\calX $}$. Then $\mu_n\ra \delta_x$ weakly.

\end{corollary}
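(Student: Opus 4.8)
The plan is to deduce weak convergence $\mu_n\to\delta_x$ by checking the two halves of the portmanteau-type characterization that suffice here: an upper bound on $\limsup_n\mu_n(C)$ for closed $C$ not containing $x$, and a lower bound showing $\mu_n$ eventually puts almost all its mass near $x$. First I would fix a closed set $C$ with $x\notin C$. Since $I^{-1}(0)=\{x\}$, every point of $C$ has $I>0$, so $\inf_C I>0$; this is precisely where goodness of $I$ is used, because compactness of sublevel sets turns the pointwise positivity of $I$ on $C$ into a uniform positive lower bound (if $\inf_C I=0$ then a minimizing sequence in $C\cap\{I\le 1\}$ would subconverge to a point of $C$ where $I=0$, contradicting $I^{-1}(0)=\{x\}$ and closedness of $C$). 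Call this value $\delta:=\inf_C I>0$. By the upper bound in Definition \ref{LDPDef},
$$
\limsup_{n\to\infty}\frac{1}{r_n}\log\mu_n(C)\le -\inf_C I=-\delta<0,
$$
hence $\mu_n(C)\to 0$ as $n\to\infty$, since $r_n\nearrow\infty$.

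Next I would extract the convergence statement. Given any open neighborhood $O$ of $x$, apply the previous paragraph to the closed set $C=\calX\setminus O$ (if this set is empty there is nothing to prove), obtaining $\mu_n(\calX\setminus O)\to 0$, i.e. $\mu_n(O)\to 1$. To conclude $\mu_n\to\delta_x$ weakly it then remains to translate "$\mu_n(O)\to 1$ for every open $O\ni x$" into convergence of integrals of bounded continuous test functions $f$: write $\int f\,d\mu_n-f(x)=\int (f-f(x))\,d\mu_n$, split the integral over $O_\eta:=\{y:|f(y)-f(x)|<\eta\}$ (open by continuity of $f$, and containing $x$) and its complement, bound the first piece by $\eta$ and the second by $2\|f\|_\infty\,\mu_n(\calX\setminus O_\eta)\to 0$, then let $\eta\to 0$. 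Alternatively, and perhaps more cleanly for a regular space, one may simply invoke Remark \ref{zeroremark} together with Lemma \ref{Lemma}'s circle of ideas, but the direct test-function argument is shortest.

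The only real obstacle is the first step — upgrading $I|_C>0$ pointwise to $\inf_C I>0$ — and it is exactly goodness that removes it; without compact sublevel sets the statement can genuinely fail (the mass could escape along a sequence on which $I$ decays to $0$). Everything else is routine: the LDP upper bound does the work, and $r_n\to\infty$ converts the logarithmic-scale negativity into honest decay of $\mu_n(C)$ to zero. I would also note in passing that the lower bound in Definition \ref{LDPDef} is not needed here — $\mu_n(O)\to 1$ follows purely from the upper bound applied to the complement — which is worth remarking since it explains why no hypothesis beyond goodness and $I^{-1}(0)=\{x\}$ is required.
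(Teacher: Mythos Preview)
Your proposal is correct and follows essentially the same approach as the paper: both use the LDP upper bound together with goodness of $I$ to show $\mu_n(C)\to 0$ for every closed $C$ not containing $x$, and then convert this into weak convergence via a portmanteau-type argument. The only cosmetic difference is in the final step, where the paper uses the layer-cake representation $\int f\,d\mu_n=\int_a^b\mu_n\{f>t\}\,dt$ while you split the test-function integral over $O_\eta=\{|f-f(x)|<\eta\}$ and its complement; both are standard and equally valid.
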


\begin{proof}
 First, note that:

\begin{claim}
\label{limsupclaim}
For all closed $C\subset \calX$,
$$
\limsup\mu_n (C)\le \delta_x (C).
$$
\end{claim}

\begin{proof}
Indeed, this is trivial when $x\in C$ since then  the right-hand side equals 1. Otherwise,
$$
\limsup_{n\ra\infty}\frac1{r_n} \log \mu_n (C)\le -\inf_C I=:-\eps<0,
$$
since $I$ is good 
and so the infimum is attained, and by assumption it cannot be zero as $x$ is the only zero of the nonnegative function $I$. Thus, for $n$ large
$$
\frac1{r_n} \log \mu_n (C)\le -\inf_C I=-\eps,
$$
so $\mu_n (C)< e^{-r_n\eps}$ and recalling
that $r_n\nearrow\infty$ (Definition \ref{LDPDef}), 
$\limsup\mu_n (C)=0=\delta_x(C)$ as desired.
\end{proof}
The proof now follows from the so-called portmanteau theorem, but we give the quick proof for completeness. Note that Claim \ref{limsupclaim} implies 
$$
\liminf\mu_n (O)\ge \delta_x (O),
$$
for all open $O\subset \calX$ (by looking at $C= \hbox{$\calX $ } \setminus  O$).
To show the convergence is equivalent to showing
$$
\lim \int_{\calX} f\mu_n=\int_{\calX} f\delta_x,
$$
for all bounded continuous functions $f$.  Indeed, if $a\le f\le b$,
\begin{equation*}
\begin{aligned}
\label{}
\liminf \int_{\calX} f\mu_n
&=
\liminf \int_a^b \mu_n\{f>t\}dt
\cr
&\ge
 \int_a^b \liminf\mu_n\{f>t\}dt
\cr
&\ge
 \int_a^b \delta_x\{f>t\}dt
\cr
&=\int_{\calX} f\delta_x.
\end{aligned}
\end{equation*}
Repeating the above computation for $-f$ gives
$\limsup \int_{\calX} f\mu_n\le \int_{\calX} f\delta_x$,
so Corollary \ref{weakconvcor}  follows.
\end{proof}

\section{Moment generating functions}
\label{MomentSec}

The logarithm of the moment generating function of the sequence of probability spaces
$
(\calX,\calA,\mu_n)
$ with normalization $\{r_n\}$ is defined by
$$
p (\theta): =\lim \frac1{r_n}\log \int^{}_{ \calX  } e^{r_n\langle \theta , x \rangle }\mu_n (x),
$$
assuming the limit exists and is finite, for each $\theta \in \hbox{$\calX^* $}$. 
(For example, if $ \hbox{$\calX $ } = \RR^n $ then $ \hbox{$\calX^* $ } = \RR^n$, and 
if $ \hbox{$\calX $ } = P(\RR^d) $ then $ \hbox{$\calX^* $ } = C^0(\RR^d)$.)

A generating function encodes a lot of information, as discovered by G\"artner, and rediscovered by Ellis.
The following theorem holds for rather general  \hbox{$\calX $ } but we will prove it for $ \hbox{$\calX $ } = \RR^n $ although the proof essentially works verbatim for any locally convex topological vector space  \hbox{$\calX $ } under the assumption of exponential tightness of the sequence of measures
(defined in \S\ref{ExpTightSubSec}).  
Being a novice in the field, the author will follow  
Berman and Zelditch and refer to the next theorem as
the G\"artner--Ellis Theorem, although a more accurate attribution
of credit is given in the historical notes in Dembo--Zeitouni to which
the reader is warmly referred to for much more accurate statements of 
all the results on LDPs that we discuss in these notes \cite[\S2.3]{DemboZ}.

In general, exponential tightness is a necessary assumption (which is automatic for  $\calX = \hbox{$\RR^n$}  $ (see   Lemma \ref{exptighlemma}) and $\calX = P(X)$ for $X$ a compact manifold). In the following we denote by $f^\star$ the 
Legendre dual of $f$ \cite[p. 104]{Rock},
\begin{equation}
\label{LegDefEq}
f^\star(y):=\sup_x[\langle x,y\rangle -f(x)].
\end{equation}
(We will later use this same definition more generally
for functions on abstract spaces where the pairing will be 
taken to be the natural one in each setting.)

\begin{theorem}
\label{GEThm}
Suppose that the moment generating function $p$ of the sequence of probability spaces
$
(\calX,\calA,\mu_n)
$ with normalization $\{r_n\}$ is well defined and Gateaux differentiable. Then
LDP$(\mu_n, r_n)$ with rate function $p^\star$.
\end{theorem}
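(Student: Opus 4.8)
The plan is to establish the two inequalities in Definition \ref{LDPDef} for the function $I = p^\star$, after first recording that $p$ is convex (as a limit of logs of integrals of exponentials — by Hölder's inequality each $\frac1{r_n}\log\int e^{r_n\langle\theta,x\rangle}\mu_n$ is convex in $\theta$) and that $p(0)=0$, so that $p^\star\ge 0$ and $p^\star$ is lower semicontinuous as a supremum of affine functions. Since we work on $\calX=\RR^n$, I would invoke Lemma \ref{exptighlemma} to get exponential tightness of $\{\mu_n\}$ for free; this is what lets us reduce the upper bound on all closed sets to the upper bound on compact sets.

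For the \emph{upper bound}, I would first prove it for compact sets. Given a compact $C$ and $\delta>0$, for each $x\in C$ the Chebyshev/Markov bound gives, for any $\theta$, $\frac1{r_n}\log\mu_n(B_\delta(x)) \le \langle\theta,x\rangle + \delta|\theta| - \frac1{r_n}\log\int e^{r_n\langle\theta,y-x+x\rangle}$... more cleanly: $\mu_n(\overline{B_\delta(x)})\le e^{-r_n(\langle\theta,x\rangle-\delta|\theta|)}\int e^{r_n\langle\theta,y\rangle}\mu_n(y)$, so $\limsup\frac1{r_n}\log\mu_n(\overline{B_\delta(x)})\le -\langle\theta,x\rangle+\delta|\theta|+p(\theta)$; optimizing over $\theta$ and letting $\delta\to0$ after a covering argument yields $\limsup\frac1{r_n}\log\mu_n(C)\le -\inf_C p^{\star\star}$, and here Gateaux differentiability of $p$ (hence $p = p^{\star\star}$, the Fenchel--Moreau theorem for the closed convex function $p$) upgrades this to $-\inf_C p$... wait — actually the clean statement is $\le -\inf_{x\in C}\sup_\theta[\langle\theta,x\rangle-p(\theta)] = -\inf_C p^\star$, with differentiability not even needed for the upper bound. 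Then exponential tightness promotes this from compact $C$ to all closed $C$ in the standard way: write $C\subset (C\cap K_M)\cup K_M^c$, use $\limsup\frac1{r_n}\log(a_n+b_n)=\max(\limsup\frac1{r_n}\log a_n,\limsup\frac1{r_n}\log b_n)$, and send $M\to\infty$.

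For the \emph{lower bound} — this is where Gateaux differentiability is essential and where I expect the main work to lie. The goal is: for open $O$ and $x\in O$, $\liminf\frac1{r_n}\log\mu_n(O)\ge -p^\star(x)$. The standard device is a change of measure ("tilting"): it suffices to treat points $x$ of the form $x=\nabla p(\theta)$ for some $\theta$ where $p$ is differentiable, since for such $x$ one has $p^\star(x)=\langle\theta,x\rangle-p(\theta)$. Define the tilted measures $\tilde\mu_n(A) := \frac{1}{Z_n}\int_A e^{r_n\langle\theta,y\rangle}\mu_n(y)$ with $Z_n=\int e^{r_n\langle\theta,y\rangle}\mu_n$; then $\frac1{r_n}\log Z_n\to p(\theta)$. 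One shows $\tilde\mu_n$ concentrates near $x=\nabla p(\theta)$ (this is a weak-law step, proved via the same G\"artner--Ellis upper bound applied to $\tilde\mu_n$, whose moment generating function is $p(\theta+\cdot)-p(\theta)$, a function whose Legendre dual vanishes only at $\nabla p(\theta)$ — using differentiability of $p$ at $\theta$). Then for any small ball $B_\delta(x)\subset O$, $\mu_n(B_\delta(x)) = Z_n\int_{B_\delta(x)} e^{-r_n\langle\theta,y\rangle}\tilde\mu_n(y) \ge Z_n e^{-r_n(\langle\theta,x\rangle+\delta|\theta|)}\tilde\mu_n(B_\delta(x))$, giving $\liminf\frac1{r_n}\log\mu_n(O)\ge p(\theta)-\langle\theta,x\rangle-\delta|\theta| = -p^\star(x)-\delta|\theta|$, and $\delta\to0$ finishes. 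The remaining point is that Gateaux differentiability of $p$ on all of $\RR^n$ forces the "exposed points" $\{\nabla p(\theta):\theta\}$ to be dense enough in the domain of $p^\star$ that a limiting/approximation argument covers every $x$ with $p^\star(x)<\infty$ (if $p^\star(x)=\infty$ there is nothing to prove); this density/approximation step, together with the weak law for the tilted measures, is the technical heart, and I would lean on the corresponding statement in Dembo--Zeitouni \cite[\S2.3]{DemboZ} for the details.

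The step I expect to be the genuine obstacle is the lower bound's reduction to exposed points and the accompanying concentration of the tilted measures; the upper bound, given exponential tightness (Lemma \ref{exptighlemma}) and convexity of $p$, is essentially a Chebyshev estimate plus a compactness covering, and requires no differentiability at all.
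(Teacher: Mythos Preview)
Your proposal follows essentially the same route as the paper: a Chebyshev-type bound plus a finite covering for the upper bound on compact sets, exponential tightness (Lemma~\ref{exptighlemma}) to pass from compact to closed sets, and the tilting/change-of-measure argument for the lower bound, with the concentration of the tilted measures established by applying the already-proved upper bound to them (the paper's Lemma~\ref{ldpnulemma} and Corollary~\ref{nuncor}). The only cosmetic difference is that the paper covers by ``slab'' neighborhoods $B_x^{\delta,y(x)}=\{z:|\langle z-x,y(x)\rangle|<\delta\}$ rather than metric balls, which lets one avoid the stray $|\theta|$ factor in your Chebyshev step; but this is immaterial. You are in fact more careful than the paper on one point: the paper simply writes $\sigma:=(\nabla p)^{-1}(z)$ for the near-minimizer $z$ of $p^\star$ on $O$ without commenting on whether such $\sigma$ exists, whereas you correctly flag the exposed-points issue and the approximation argument needed to cover every $z$ with $p^\star(z)<\infty$.
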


Before proving the  G\"artner--Ellis Theorem let us prove two famous corollaries thereof.

\subsection{Cram\'er's Theorem}
\label{}

Let $\{X_i\}_{i = 1}^n $ be independent, identically distributed, random variables (i.i.d.r.v.) on $\wt\calX$ with values in \hbox{$\RR$}. This means that the law of $X_i $  is equal to some $\mu\in P(\RR)$ regardless of $i $. The {\it sample mean } is by definition the random variable with values in \hbox{$\RR$},
$$
S_n:=\sum X_i/n.
$$
This is the ``probability" notation. Recalling that a random variable is really a function leads to a more precise notation. The random variable $S_n$ is the measurable function $S_n:\wt\calX^n\ra\RR$
$$
(x_1,\ldots,x_n)\mapsto[X_1(x_1)+\ldots X_n(x_n)]/n.
$$
This is really the composition
$$
(x_1,\ldots,x_n)\mapsto(X_1(x_1),\ldots,X_n(x_n))\mapsto[X_1(x_1)+\ldots X_n(x_n)]/n,
$$
and so the law of $S_n $  is the push forward of the law of the  \hbox{$\RR^n$}-valued random variable 
$$X_1\otimes\cdots\otimes X_n:=(X_1,\ldots,X_n)$$ under the ``mean map"
$$
s_n:(k_1,\ldots,k_n)\mapsto \sum k_i/n.
$$ 
The law of $X_1\otimes\cdots\otimes X_n$ is $\mu\otimes\cdots\otimes \mu$.
Thus, the law of $S_n $ is $\mu_n: =(s_n)_{\#}\mu^{\otimes n}$.

\begin{corollary}
\label{CramerCor}
Suppose that $\mu=fdx$ with $f\in C^0(\RR)$ and with compact support.
LDP$(\mu_n, n)$ with rate function $p^\star*$.
\end{corollary}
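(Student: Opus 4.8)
The plan is to reduce directly to the Gärtner--Ellis Theorem (Theorem \ref{GEThm}) applied with $\calX=\RR$ and $r_n=n$; the only work is to identify the moment generating function $p$ of the sequence $\{\mu_n\}$ and to check that it is everywhere finite and Gateaux differentiable.

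First I would compute $p$. By the description preceding the corollary, $\mu_n=(s_n)_{\#}\mu^{\otimes n}$ with $s_n(x_1,\ldots,x_n)=\sum x_i/n$, so by the push-forward (change of variables) formula together with the product structure of $\mu^{\otimes n}$,
$$
\int_\RR e^{n\theta x}\,d\mu_n(x)=\int_{\RR^n}e^{\theta(x_1+\cdots+x_n)}\,d\mu^{\otimes n}=\Big(\int_\RR e^{\theta x}\,d\mu(x)\Big)^{n}.
$$
Hence $\frac1n\log\int_\RR e^{n\theta x}\,d\mu_n(x)=\log\int_\RR e^{\theta x}\,d\mu(x)=:\Lambda(\theta)$ for every $n$, so the limit defining $p$ exists trivially and $p(\theta)=\Lambda(\theta)$, the logarithmic moment generating function of the single law $\mu$.

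Next I would verify the hypotheses of Theorem \ref{GEThm}. Let $\supp f\subseteq[-M,M]$ with $M>0$. Since $f\ge 0$ is continuous with $\int_\RR f\,dx=1$, the integral $\int_\RR e^{\theta x}f(x)\,dx$ lies in $(0,\infty)$ for every $\theta\in\RR$, so $p=\Lambda$ is real-valued on all of $\RR$. Moreover $x\mapsto e^{\theta x}f(x)$ and $x\mapsto x e^{\theta x}f(x)$ are bounded uniformly in $\theta$ over any compact set of parameters $\theta$ (because $x$ ranges over the fixed compact set $[-M,M]$), so differentiation under the integral sign is legitimate and $p$ is in fact $C^\infty$, in particular Gateaux differentiable. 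Also, since every $\mu_n$ is supported in $[-M,M]$, the sequence is exponentially tight for free, so there is no difficulty invoking the theorem in the stated form. Theorem \ref{GEThm} then yields LDP$(\mu_n,n)$ with rate function $p^\star$, which is exactly the claim.

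I do not expect a genuine obstacle here: the compactness of $\supp f$ makes both the finiteness of $p$ and the exponential tightness automatic, and the one computation of substance --- that the $n$-th moment generating function of $\mu_n$ is the $n$-th power of that of $\mu$ --- is just the i.i.d.\ structure unwound in the paragraph before the corollary. The only point needing a line of care is the interchange of $\tfrac{d}{d\theta}$ with $\int_\RR$ used to establish differentiability of $p$; with the compact-support hypothesis this is routine dominated convergence.
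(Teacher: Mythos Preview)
Your proof is correct and follows essentially the same route as the paper: compute the moment generating function of $\mu_n$ via the push-forward and i.i.d.\ product structure to obtain $p(\theta)=\log\int_\RR e^{\theta x}\,d\mu(x)$, observe that the compact-support hypothesis makes $p$ finite and $C^1$, and invoke Theorem~\ref{GEThm}. You are in fact more explicit than the paper about why differentiation under the integral is justified and about exponential tightness, but the argument is the same.
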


\begin{proof}
Set $\theta:=(\theta_1,\ldots ,\theta_n)\in \hbox{$\RR^n$}$.
The moment generating function is
\begin{equation}
\begin{aligned}
\label{}
p (\theta) &=\lim\frac 1n\log\int^{}_{\RR}e^{n x\theta}\mu_n(x)  
\cr
&=
\lim\frac 1n\log
\int^{}_{ \RR^n}e^{n\langle s_n(a_1,\ldots,a_n)\theta}\mu^{\otimes n}(a_1,\ldots,a_n)  
\cr
&=
\lim\frac 1n\log
\int^{}_{ \RR }e^{a_1\theta} \mu(a_1)\cdots
\int^{}_{ \RR }e^{a_n\theta} \mu(a_n)
\cr
&=\lim\frac 1n\log
\Big(\int^{}_{\RR }e^{a\theta} \mu(a)^n\Big)^n
\cr
&=\log
\int^{}_{ \RR }e^{a\theta} \mu(a)
\end{aligned}
\end{equation}
This is $C^1$ because of the assumption on $\mu $ so we are done by Theorem \ref{GEThm}. 
\end{proof}

\subsection{Sanov's theorem}
\label{}

The projection via the mean map gives rather crude information. 
Set $X^n:=X\otimes\cdots\otimes X$.
Another sequence of measures that can be obtained from the $n$-fold product via
the ``empirical" map $\delta^n:X^n\ra P(X)$,
\begin{equation*}
\begin{aligned}
\label{}
\delta^n (x_1,\ldots , x_n): =\frac1{n}\sum_{i = 1}^n\delta_{x_i}.
\end{aligned}
\end{equation*}
The measures
\begin{equation}
\begin{aligned}
\label{}
\Gamma_n:=(\delta_n)_{\#}\mu^{\otimes n}\in P(X),
\end{aligned}
\end{equation}
all live on the same space and therefore can be studied via a large deviation principle, if the associated  moment generating function exists.
Note that $\Gamma_n$ is the law of the random variable $\delta^n:(X^n
,\mu^{\otimes n})\ra P(X),$ i.e., of the  {\it random measure } $\delta^n$
(where the randomness is determined by $\mu^{\otimes n}$, i.e., by sampling $n$ points in $X$ independently, each according to $\mu$).

Define the entropy functional $\Ent:P(X)\times P(X)\ra \RR$,
\begin{equation}
\begin{aligned}
\label{Enteq}
\Ent(\mu,\nu):=\int_X \log\frac{\nu}{\mu}\nu,
\end{aligned}
\end{equation}
whenever $\nu$ is absolutely continuous with respect to $\mu$,
and $\infty$ otherwise.

\begin{corollary}
\label{SanovCor}
Suppose that $\mu=fdx$ with $f\in C^0(X)$ and with compact support.
LDP$(\Gamma_n, n)$ with rate function $\Ent(\mu,\,\cdot\,)$.
\end{corollary}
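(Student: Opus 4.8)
The plan is to deduce this from the G\"artner--Ellis Theorem (Theorem \ref{GEThm}), applied with ambient space $P(X)$ (whose dual is $C^0(X)$, with pairing $\langle\theta,\nu\rangle=\int_X\theta\,d\nu$), and then to identify the resulting rate function $p^\star$ with the relative entropy $\Ent(\mu,\,\cdot\,)$ via the classical variational characterization of entropy. The structure mirrors the proof of Corollary \ref{CramerCor}: compute $p$, verify the hypotheses of Theorem \ref{GEThm}, and then compute the Legendre transform.

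First I would compute the moment generating function. Since $\Gamma_n=(\delta^n)_\#\mu^{\otimes n}$ and $\langle\theta,\delta^n(x_1,\dots,x_n)\rangle=\frac1n\sum_{i=1}^n\theta(x_i)$, the change of variables along $\delta^n$ gives, for every $\theta\in C^0(X)$,
$$
\int_{P(X)}e^{n\langle\theta,\nu\rangle}\,\Gamma_n(\nu)=\int_{X^n}e^{\sum_{i=1}^n\theta(x_i)}\,\mu^{\otimes n}(x_1,\dots,x_n)=\Big(\int_X e^{\theta}\,d\mu\Big)^{n},
$$
so that $p(\theta)=\log\int_X e^{\theta}\,d\mu$, with no genuine limit needed. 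Because $\mu=f\,dx$ has compact support and $\theta$ is continuous, $0<\int_X e^{\theta}\,d\mu<\infty$, so $p$ is finite everywhere; its directional derivative at $\theta$ along $\eta\in C^0(X)$ equals $\int_X\eta\,e^{\theta}\,d\mu\big/\int_X e^{\theta}\,d\mu$, which is linear and continuous in $\eta$, so $p$ is Gateaux differentiable. Moreover $\mu$ is supported on some compact $K$, hence every $\Gamma_n$ is supported on the weak-$*$ compact set $P(K)\subset P(X)$; so $\{\Gamma_n\}$ is (trivially) exponentially tight and the version of Theorem \ref{GEThm} valid for locally convex spaces applies (run inside the space of signed measures, restricted to the affine slice of mass-one measures), yielding LDP$(\Gamma_n,n)$ with rate function $p^\star$.

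It then remains to identify $p^\star$, i.e., to prove
$$
p^\star(\nu)=\sup_{\theta\in C^0(X)}\Big[\int_X\theta\,d\nu-\log\int_X e^{\theta}\,d\mu\Big]=\Ent(\mu,\nu).
$$
For $p^\star(\nu)\le\Ent(\mu,\nu)$ I would use Jensen's inequality: if $\nu\ll\mu$ with density $g=d\nu/d\mu$, then for any $\theta$,
$$
\int_X e^{\theta}\,d\mu\ \ge\ \int_{\{g>0\}}e^{\theta-\log g}\,d\nu\ \ge\ \exp\!\Big(\int_X(\theta-\log g)\,d\nu\Big),
$$
and taking logarithms and rearranging gives $\int_X\theta\,d\nu-\log\int_X e^{\theta}\,d\mu\le\int_X\log g\,d\nu=\Ent(\mu,\nu)$ by \eqref{Enteq}; if $\nu\not\ll\mu$, choosing a compact $\mu$-null set carrying positive $\nu$-mass and a continuous bump on a small neighborhood of it shows the supremum is $+\infty=\Ent(\mu,\nu)$. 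For the reverse inequality (only $\Ent(\mu,\nu)<\infty$ is at issue) one plugs in $\theta$ close to $\log g$: using the truncations $\theta_N=\min(\log g,N)$ one has $\int\theta_N\,d\nu-\log\int e^{\theta_N}\,d\mu\to\int\log g\,d\nu-\log\int g\,d\mu=\Ent(\mu,\nu)$ by monotone and dominated convergence, and each bounded measurable $\theta_N$ may be approximated by continuous functions so that both $\int\theta\,d\nu$ and $\int e^{\theta}\,d\mu$ converge. This gives $p^\star=\Ent(\mu,\,\cdot\,)$ and completes the proof.

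I expect the computation of $p$ and the invocation of Theorem \ref{GEThm} to be essentially automatic, so the main obstacle is the identification $p^\star=\Ent(\mu,\,\cdot\,)$ --- specifically the approximation step in the lower bound, where one must simultaneously control $\int\theta\,d\nu$ and the normalizing integral $\int e^{\theta}\,d\mu$ while replacing the merely measurable function $\log g$ by a continuous one. A secondary point needing care is the passage from the finite-dimensional G\"artner--Ellis Theorem to $P(X)$, which is exactly why exponential tightness --- here free, from the compact support of $\mu$ --- is invoked.
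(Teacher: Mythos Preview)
Your proposal is correct and follows essentially the same route as the paper: compute the moment generating function by pushing through the empirical map to get $p(\theta)=\log\int_X e^{\theta}\mu$, check differentiability, invoke G\"artner--Ellis, and identify $p^\star=\Ent(\mu,\,\cdot\,)$ via the variational formula for entropy. The paper packages the last step as a separate lemma (Lemma~\ref{Entlemma}) and is somewhat informal there, plugging in $\theta=\log(\nu/\mu)$ directly without addressing whether this lies in $C^0(X)$; your truncation-and-approximation argument is a more honest treatment of exactly the point you flagged as the main obstacle.
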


\begin{proof}
Now  \hbox{$\calX = P(X)$. } 
Let $\theta\in C^0_b(X)=\calX^*$.
The moment generating function is
\begin{equation}
\begin{aligned}
\label{}
p (\theta) &=\lim\frac 1n
\log\int^{}_{P(X)}e^{n \langle \theta,\nu \rangle }\Gamma_n(\nu)  
\cr
&=
\lim\frac 1n
\log\int^{}_{P(X)}e^{n \langle \theta,\nu \rangle }(\delta_n)_{\#}\mu^{\otimes n}
\cr
&=
\lim\frac 1n
\log\int^{}_{X^n}e^{n \langle \theta,\delta^n(x_1,\ldots,x_n)\rangle }
\mu(x_1)\otimes\cdots\mu(x_n)
\cr
&=
\lim\frac 1n
\log\int^{}_{X^n}e^{n \langle \theta,\frac1n\sum_{i = 1}^n\delta_{x_i}\rangle }
\mu(x_1)\otimes\cdots\mu(x_n)
\cr
&=
\lim\frac 1n
\log\int^{}_{X^n}e^{\sum_{i = 1}^n\theta(x_i) }
\mu(x_1)\otimes\cdots\mu(x_n)
\cr
&=
\lim\frac 1n
\log\Big(\int^{}_{X}e^{\theta}\mu\Big)^n
\cr
&=
\log\int^{}_{X}e^{\theta}\mu.
\end{aligned}
\end{equation}
This is $C^1$ because of the assumption on $\mu $ so we are done by Theorem \ref{GEThm}
and Lemma \ref{Entlemma} below. 
\end{proof}

Define $I: C^0(X)\ra  \RR$,
\begin{equation}
\begin{aligned}
\label{}
I_\mu(\theta):=
\log\int_{X}e^{\theta}\mu.
\end{aligned}
\end{equation}
Recall the definition of the Legendre transform \eqref{LegDefEq},
where in the following lemma the pairing is taken to be the usual
``integration pairing''' between functions and measures.

\begin{lemma}
\label{Entlemma}
The Legendre transform of $\Ent(\mu,\,\cdot\,)$ is $I_\mu$ 
and vice versa.
\end{lemma}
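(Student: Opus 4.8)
The plan is to prove both directions of the duality by direct computation, exploiting the fact that the entropy functional and $I_\mu$ are explicit. Write $\nu = e^\phi \mu$ whenever $\nu \ll \mu$ (so $\phi = \log(\nu/\mu)$ is the density's logarithm), and recall that a probability measure $\nu$ must satisfy $\int_X e^\phi \mu = 1$. The Legendre transform of $I_\mu$ at a point $\nu \in P(X)$ is $I_\mu^\star(\nu) = \sup_{\theta \in C^0(X)} \left[\langle \theta, \nu\rangle - \log \int_X e^\theta \mu\right]$, and I want to show this equals $\Ent(\mu,\nu)$; conversely $\Ent(\mu,\cdot)^\star(\theta) = \sup_{\nu \in P(X)}\left[\langle\theta,\nu\rangle - \Ent(\mu,\nu)\right]$ should equal $I_\mu(\theta)$.

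First I would do the computation $\Ent(\mu,\cdot)^\star = I_\mu$, since it is the cleaner of the two. Fix $\theta \in C^0(X)$. For any $\nu \in P(X)$ with $\nu \ll \mu$, writing $d\nu = e^\phi d\mu$, the quantity to maximize is $\int_X \theta\, e^\phi \mu - \int_X \phi\, e^\phi \mu = \int_X (\theta - \phi) e^\phi \mu$, subject to $\int_X e^\phi \mu = 1$. This is a constrained optimization over $\phi$; the natural guess from the pointwise Euler--Lagrange condition is $\phi = \theta - I_\mu(\theta)$, i.e. $e^\phi = e^\theta / \int_X e^\theta\mu$, which indeed normalizes to a probability measure. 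Substituting back gives the value $I_\mu(\theta)$. To turn this into a rigorous upper bound I would use the Gibbs variational inequality / the nonnegativity of relative entropy: for any admissible $\phi$,
\begin{equation*}
\int_X (\theta - \phi) e^\phi \mu - I_\mu(\theta) = -\int_X \log\frac{e^\phi \mu}{e^{\theta - I_\mu(\theta)}\mu}\, e^\phi \mu = -\Ent\!\left(e^{\theta - I_\mu(\theta)}\mu,\ e^\phi\mu\right) \le 0,
\end{equation*}
with equality exactly when $e^\phi\mu = e^{\theta - I_\mu(\theta)}\mu$. This uses only Jensen's inequality (or the elementary inequality $a\log a \ge a - 1$) to see $\Ent \ge 0$, which should be recalled or quickly proved inline. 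The case $\nu \not\ll \mu$ contributes $-\infty$ and is irrelevant to the supremum.

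For the reverse direction, $I_\mu^\star = \Ent(\mu,\cdot)$, I would proceed symmetrically. Fix $\nu \in P(X)$. If $\nu \not\ll \mu$, one must check $\sup_\theta [\langle\theta,\nu\rangle - I_\mu(\theta)] = +\infty$: choose $\theta$ to be large on a set carrying positive $\nu$-mass but arbitrarily small $\mu$-mass (here continuity of $\theta$ forces a small mollification argument, but since we only need the supremum to diverge this is routine). If $\nu \ll \mu$ with $d\nu = e^\phi d\mu$, then for any $\theta$,
\begin{equation*}
\langle \theta,\nu\rangle - I_\mu(\theta) = \int_X \theta\, e^\phi\mu - \log\int_X e^\theta\mu \le \int_X \phi\, e^\phi\mu = \Ent(\mu,\nu),
\end{equation*}
where the inequality is again the Gibbs bound (equivalently: $\int \theta e^\phi\mu - \int\phi e^\phi\mu \le \log\int e^{\theta-\phi}e^\phi\mu = \log\int e^\theta\mu$ by Jensen applied to the probability measure $e^\phi\mu$), with equality in the limit $\theta \to \phi$. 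Taking $\theta = \phi$ itself (if $\phi \in C^0(X)$) gives equality on the nose; in general one approximates $\phi$ in $L^1(e^\phi\mu)$ by bounded continuous functions and passes to the limit.

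The main obstacle I anticipate is not any single inequality — both directions reduce to the same Gibbs/Jensen estimate — but the attainment and approximation issues: in the $I_\mu^\star = \Ent$ direction the optimal $\theta$ is $\phi = \log(d\nu/d\mu)$, which need not be continuous or even bounded, so one has to argue that the supremum over $C^0(X)$ (or $C^0_b(X)$) still reaches $\Ent(\mu,\nu)$ by truncation and mollification, and separately that it diverges when $\nu \not\ll \mu$. Handling the $+\infty$ cases and these density-of-test-functions arguments carefully is where the real work lies; the algebra is otherwise immediate once the Gibbs inequality $\int fg\,\mu - \log\int e^f\mu \le \int g\log g\,\mu$ (for $g \ge 0$, $\int g\mu = 1$) is in hand.
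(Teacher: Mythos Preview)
Your proof is correct and follows essentially the same route as the paper's: both directions reduce to the Jensen/Gibbs inequality $\Ent(\mu,\nu)+I_\mu(\theta)\ge\langle\theta,\nu\rangle$, with equality at $\nu=e^\theta\mu/\int e^\theta\mu$ (respectively $\theta=\log(\nu/\mu)$). You are in fact more careful than the paper about the attainment issues---the paper simply plugs in $\theta=\log(\nu/\mu)$ without discussing whether this lies in $C^0(X)$ or what happens when $\nu\not\ll\mu$, whereas you flag the need for truncation/mollification and the divergence in the singular case.
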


\begin{proof}
First, $I_\mu$ is convex on $C(X)$ since it is a moment generating function (see Lemma  \ref{mgfcvxlemma}). Alternatively, the arguments in the proof of Lemma \ref{mgfcvxlemma} show convexity. We will show that Legendre transform  of
$I_\mu$ is $\Ent(\mu,\,\cdot\,)$ which therefore will imply that the latter is also convex. We claim that
\begin{equation}
\begin{aligned}
\label{entIeq}
\Ent(\mu,\nu)+I_\mu(\theta)\ge\langle \theta,\nu\rangle. 
\end{aligned}
\end{equation}

Indeed,
\begin{equation*}
\begin{aligned}
\label{}
I_\mu(\theta)-\langle \theta,\nu\rangle
&=
\log\int_{X}e^{\theta}\mu-\langle \theta,\nu\rangle
\cr
&=
\log\int_{X}e^{\theta}\frac\mu\nu \nu-\langle \theta,\nu\rangle
\cr
&\ge
\int_{X}\log\Big(e^{\theta}\frac\mu\nu\Big) \nu-\langle \theta,\nu\rangle
\cr
&=
\int_{X}\Big(\theta +\log\frac\mu\nu\Big) \nu-\langle \theta,\nu\rangle
=-\Ent(\mu,\nu),
\end{aligned}
\end{equation*}
with equality if and only if $\nu=e^\th\mu/\int e^\th\mu$ (so that $\nu\in P(X)$). Thus, 
$\Ent(\mu,\nu)\ge I_\mu^\star(\nu):=\sup_{\th}[\langle \theta,\nu \rangle- 
I_\mu(\th)]$. On the other hand, putting $\th=\log\frac\nu\mu$,
$$
\Big\langle \log\frac\nu\mu,\nu \Big\rangle- 
I_\mu\Big(\log\frac\nu\mu\Big)
=
\Ent(\mu,\nu)-0,
$$
so
$\Ent(\mu,\nu)\le \sup_{\th}[\langle \theta,\nu \rangle- 
I_\mu(\th)]$. Thus,
$\Ent(\mu,\nu)= I_\mu^\star(\nu)$ and so in particular from
the general property of the Legendre transform \eqref{LegDefEq}
(see also \cite[p. 104]{Rock}),
\begin{equation}
\label{LegBasicProp}
f(x)+f^\star(y)\ge\langle x,y\rangle,
\end{equation}
it follows that 
\eqref{entIeq} holds, as claimed.
Equation \eqref{entIeq} gives,
$$
\sup_{\nu}\big[\langle\th ,\nu \rangle- \Ent(\mu,\nu)\big]
\le 
I_\mu(\th),
$$
and now putting $\nu=e^\th\mu/\int e^\th\mu$  we see equality is attained,
so $I_\mu$ is the Legendre transform of $\Ent(\mu,\,\cdot\,)$, concluding
the proof.
\end{proof}

\subsection{Properties of the moment generating function}

\begin{lemma}
\label{mgfcvxlemma}
The moment generating function is convex.
\end{lemma}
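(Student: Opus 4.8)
The plan is to prove that the moment generating function
$$
p(\theta)=\lim_{n\to\infty}\frac{1}{r_n}\log\int_{\calX}e^{r_n\langle\theta,x\rangle}\,\mu_n(x)
$$
is convex in $\theta$ by a direct application of H\"older's inequality. First I would fix $\theta_0,\theta_1\in\calX^*$ and $\lambda\in[0,1]$, set $\theta_\lambda=(1-\lambda)\theta_0+\lambda\theta_1$, and consider the integral $\int_{\calX}e^{r_n\langle\theta_\lambda,x\rangle}\,\mu_n(x)$. Since $\langle\cdot,x\rangle$ is linear, the integrand factors as $\bigl(e^{r_n\langle\theta_0,x\rangle}\bigr)^{1-\lambda}\bigl(e^{r_n\langle\theta_1,x\rangle}\bigr)^{\lambda}$. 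Applying H\"older's inequality with conjugate exponents $1/(1-\lambda)$ and $1/\lambda$ (the degenerate cases $\lambda=0,1$ being trivial) against the probability measure $\mu_n$ gives
$$
\int_{\calX}e^{r_n\langle\theta_\lambda,x\rangle}\,\mu_n(x)\le\Bigl(\int_{\calX}e^{r_n\langle\theta_0,x\rangle}\,\mu_n(x)\Bigr)^{1-\lambda}\Bigl(\int_{\calX}e^{r_n\langle\theta_1,x\rangle}\,\mu_n(x)\Bigr)^{\lambda}.
$$

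Next I would take $\frac{1}{r_n}\log$ of both sides; since $\log$ is monotone and $r_n>0$ this preserves the inequality and, using $\log(ab)=\log a+\log b$ and pulling the exponents out, yields
$$
\frac{1}{r_n}\log\int_{\calX}e^{r_n\langle\theta_\lambda,x\rangle}\,\mu_n(x)\le(1-\lambda)\,\frac{1}{r_n}\log\int_{\calX}e^{r_n\langle\theta_0,x\rangle}\,\mu_n(x)+\lambda\,\frac{1}{r_n}\log\int_{\calX}e^{r_n\langle\theta_1,x\rangle}\,\mu_n(x).
$$
Finally I would pass to the limit $n\to\infty$. By hypothesis the limit defining $p$ exists (and is finite) at $\theta_0$ and $\theta_1$, and the displayed inequality shows the left side is bounded above, so taking $\limsup$ on the left and $\lim$ on the right gives $p(\theta_\lambda)\le(1-\lambda)p(\theta_0)+\lambda p(\theta_1)$, which is exactly convexity. (If one only wants convexity on the domain where $p$ is assumed well-defined, this suffices; one may remark that the same inequality shows the effective domain of $p$ is convex.)

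The argument is essentially routine and I do not expect a genuine obstacle; the only mild point of care is the bookkeeping about whether the limit on the left-hand side is known to exist a priori or only exists because of the bound just derived. Since the theorem is stated in the setting where $p$ is assumed well-defined on all of $\calX^*$ (that is the standing hypothesis under which the G\"artner--Ellis Theorem is invoked), I would simply pass to $\lim$ throughout; in a more careful formulation one would replace $\lim$ by $\limsup$ on the left and note a posteriori that it is in fact a limit. No other subtlety arises, and the proof is complete in a few lines.
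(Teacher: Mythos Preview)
Your proof is correct and takes essentially the same approach as the paper: both argue via H\"older's inequality applied to the integrand $e^{r_n\langle\theta,x\rangle}$. The only difference is cosmetic---the paper uses the Cauchy--Schwarz case ($\lambda=1/2$) to obtain midpoint convexity of each pre-limit function and then invokes that pointwise limits of convex functions are convex, whereas you apply H\"older with general conjugate exponents $1/(1-\lambda),\,1/\lambda$ to get full convexity directly; your version is marginally cleaner for that reason.
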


\begin{proof}
The pointwise limit of a sequence of  convex functions is convex (one way to think about it is in terms of the epigraphs--- and clearly the limits of convex sets is a convex sets, and the limits of epigraphs is moreover an epigraph). Thus, it suffices to show that 
$$
\frac1{r_n}\log \int^{}_{ \hbox{$\calX $ } } e^{r_n\langle \theta , x \rangle }\mu_n (x)
$$
is a convex function. Indeed, since $||fg||_{L^1(\mu)}\le ||f||_{L^2(\mu)}||g||_{L^2(\mu)}$,
\begin{equation*}
\begin{aligned}
\label{}
\frac1{r_n}\log \int^{}_{ \hbox{$\calX $ } } e^{r_n\langle (\theta_1+\theta_2)/2 , x \rangle }\mu_n (x)
&=
\frac1{r_n}\log \int^{}_{ \hbox{$\calX $ } } 
\sqrt{e^{r_n\langle \theta_1 , x \rangle }}
\sqrt{e^{r_n\langle \theta_2 , x \rangle }}
\mu_n (x)
\cr
&\le 
\frac1{r_n}\log 
\Bigg(
\sqrt{\int^{}_{ \hbox{$\calX $ } } e^{r_n\langle \theta_1 , x \rangle }\mu_n (x)}
\sqrt{\int^{}_{ \hbox{$\calX $ } } e^{r_n\langle \theta_2 , x \rangle }\mu_n (x)}
\Bigg)
\cr
&= 
\frac12
\Bigg(
\frac1{r_n}\log 
{\int^{}_{ \hbox{$\calX $ } } e^{r_n\langle \theta_1 , x \rangle }\mu_n (x)}
+
\frac1{r_n}\log 
{\int^{}_{ \hbox{$\calX $ } } e^{r_n\langle \theta_2 , x \rangle }\mu_n (x)}
\Bigg),
\end{aligned}
\end{equation*}
as desired.
\end{proof}

\begin{lemma}
\label{}
$p^\star$ is convex and nonnegative.
\end{lemma}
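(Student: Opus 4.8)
The plan is to prove both assertions — that $p^\star$ is convex and that it is nonnegative — by appealing only to the definition \eqref{LegDefEq} of the Legendre transform and to elementary properties established earlier. Convexity is a completely general fact: the Legendre transform of \emph{any} function is convex, since it is a pointwise supremum of affine functions of the variable $y$. Nonnegativity will come from the normalization $p(0)=0$, which in turn follows from the fact that each $\mu_n$ is a probability measure.

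For convexity, I would argue as follows. For each fixed $x$, the map $y\mapsto \langle x,y\rangle - p(x)$ is affine in $y$, hence convex. Since $p^\star(y)=\sup_x[\langle x,y\rangle - p(x)]$ is the pointwise supremum over $x$ of this family of convex (indeed affine) functions, and since the pointwise supremum of an arbitrary family of convex functions is convex, $p^\star$ is convex. One can spell this out directly: given $y_1,y_2$ and $t\in[0,1]$, for any $x$ we have $\langle x, ty_1+(1-t)y_2\rangle - p(x) = t(\langle x,y_1\rangle - p(x)) + (1-t)(\langle x,y_2\rangle - p(x)) \le t\,p^\star(y_1) + (1-t)\,p^\star(y_2)$, and taking the supremum over $x$ gives $p^\star(ty_1+(1-t)y_2)\le t\,p^\star(y_1)+(1-t)\,p^\star(y_2)$. (Alternatively, one could invoke Lemma \ref{mgfcvxlemma}, which says $p$ itself is convex, but this is not needed for convexity of $p^\star$.)

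For nonnegativity, the key observation is that $p(0)=0$: indeed $p(0)=\lim \frac{1}{r_n}\log\int_{\calX} e^{0}\,\mu_n = \lim \frac{1}{r_n}\log \mu_n(\calX) = \lim\frac{1}{r_n}\log 1 = 0$, using that $\mu_n$ is a probability measure. Therefore, taking $x=0$ in the supremum defining $p^\star(y)$,
\[
p^\star(y)=\sup_x[\langle x,y\rangle - p(x)]\ge \langle 0,y\rangle - p(0) = 0
\]
for every $y$, so $p^\star\ge 0$.

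I do not anticipate any real obstacle here: both claims are soft and follow from the definitions, and the only substantive input is the normalization $p(0)=0$ coming from $\mu_n$ being a probability measure. The one point worth stating carefully is the hypothesis under which this makes sense — namely that $p$ is well defined (the defining limit exists and is finite) on the relevant dual space, so that $p(0)=0$ is a legitimate value to feed into the supremum; this is exactly the standing assumption in this section.
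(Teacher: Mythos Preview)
Your proof is correct and follows essentially the same approach as the paper: convexity of $p^\star$ because it is a supremum of affine functions, and nonnegativity by plugging $\theta=0$ (your $x=0$) into the defining supremum and using $p(0)=0$. You simply spell out a few more details (why $p(0)=0$, the explicit convexity inequality) than the paper does.
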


\begin{proof}
By definition
$$
 \pstar (x): = \sup_{\theta }[\langle x,\theta  \rangle - p (\theta )]
$$
is a supremum of affine functions, hence it is convex. Plugging in $\theta = 0$ and using that  $p(0)=0$ it follows that $\pstar (x)\ge 0$.
\end{proof}

The reader that compares the statement of Theorem \ref{GEThm} to that in some books might note that one does not really need to assume the moment generating function is differentiable and certain weaker assumptions are enough. One of them though is automatic from convexity: 

\begin{lemma}
\label{}
If there exists a small ball $B$ about the origin on which $p< \infty 
$
then $p> - \infty 
$ everywhere.
\end{lemma}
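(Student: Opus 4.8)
The plan is to exploit convexity of $p$ together with the hypothesis that $p$ is bounded (equal to $-\infty$ being excluded) on a neighborhood of the origin. Recall from Lemma \ref{mgfcvxlemma} that $p$ is convex wherever it is defined, and note that $p(0)=0$ is finite by the normalization $\mu_n(\calX)=1$. The key point is that a convex function on $\RR^n$ (or on a locally convex space) which is finite on an open set cannot take the value $-\infty$ anywhere without being identically $-\infty$ on the interior of its effective domain; so the content is really to propagate finiteness of $p$ from the ball $B$ to all of $\calX^*$.

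First I would fix an arbitrary $\theta\in\calX^*$ and seek to bound $p(\theta)$ from below. Choose $\eps>0$ small enough that $-\eps\theta\in B$ (this uses that $B$ is a ball about the origin and that scalar multiples of $\theta$ eventually enter any such ball). Set $t:=\eps/(1+\eps)\in(0,1)$, so that $0 = t\,(-\theta) + (1-t)\bigl(\tfrac{t}{1-t}\theta\bigr)$; a cleaner choice is to write the origin as a convex combination of $-\eps\theta$ and $\theta$: explicitly $0 = \frac{1}{1+\eps}(-\eps\theta) + \frac{\eps}{1+\eps}\theta$. Applying convexity of $p$ to this combination gives
\begin{equation*}
0 = p(0) \le \frac{1}{1+\eps}\,p(-\eps\theta) + \frac{\eps}{1+\eps}\,p(\theta).
\end{equation*}
Since $-\eps\theta\in B$, the hypothesis gives $p(-\eps\theta)<\infty$ (that is the meaning of $p<\infty$ on $B$), so rearranging yields
\begin{equation*}
p(\theta) \ge -\frac{1}{\eps}\,p(-\eps\theta) > -\infty,
\end{equation*}
which is exactly the desired conclusion.

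The only delicate point is the tacit use of the definition of $p$ as a genuine limit rather than a limsup/liminf: convexity of each finite-$n$ approximant $\frac{1}{r_n}\log\int e^{r_n\langle\theta,x\rangle}\mu_n$ (Lemma \ref{mgfcvxlemma}) passes to the limit, but one should be slightly careful that the three relevant values $p(0)$, $p(-\eps\theta)$, $p(\theta)$ are all being interpreted consistently (as extended-real limits), which they are by the standing convention that $p$ is ``well defined'' where we evaluate it and $p(0)=0$ always. I expect no real obstacle here; the one-line convexity estimate above is the whole proof, and the ``hard part'' is merely choosing the convex combination so that the known-finite value carries a positive coefficient while $p(\theta)$ also appears with a positive coefficient, which the combination $0=\frac{1}{1+\eps}(-\eps\theta)+\frac{\eps}{1+\eps}\theta$ arranges.
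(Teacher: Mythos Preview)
Your proof is correct and is essentially identical to the paper's: both write $0=\frac{1}{1+\eps}(-\eps\theta)+\frac{\eps}{1+\eps}\theta$ (the paper uses the letter $C$ in place of your $\eps$), apply convexity of $p$ together with $p(0)=0$, and rearrange to bound $p(\theta)$ below by $-\frac{1}{\eps}p(-\eps\theta)$. Your additional remarks about consistency of the extended-real limit are harmless and do not change the argument.
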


\begin{proof}
This is a general fact about convex functions that can be proved as follows:
$$
p (0)\le \frac1{1+C} p (-C\theta ) + \frac C{1+C} p (\theta ),
$$ so
$$
 \frac C{1+C} p (\theta )
\ge 
p (0) - \frac1{1+C} p (-C\theta ). 
$$
Now, choose $C >0$ small enough so that $C\theta \in B$ and note $p(0)=0$.
\end{proof}

\section{Proof of the G\"artner--Ellis Theorem  }
\lb{GESec}

The goal of this section 
is to give a proof of  Theorem \ref{GEThm}.
The proof of the upper bound (the one about close sets) is a little easier 
and so we will go over it at first. There are two main steps: first, prove the upper bound for compact sets; second, show that compact sets capture the general case.

\subsection{The upper bound for compact sets}
\lb{uppbdsubsec}
The upper bound for compact sets is sometimes called the weak upper bound LDP. Let $C\subset \hbox{$\calX $ }  $ then be a compact set. We claim that
$$
\limsup_{n\ra\infty}\frac1{r_n} \log \mu_n (C)\le -\inf_C  \pstar .
$$

Fix $\delta>0$.
For each $x\in \hbox{$\calX $ } $ 
let $y(x)\in  \hbox{$\calX^* $}$
satisfy
$\langle x,y(x)\rangle-p(y(x))>\pstar(x)-\delta$ 
and let $B_x^{\delta,y(x)}$ be a neighborhood of 
$x\in \hbox{$\calX $ } $ defined as follows
$$
B_x^{\delta,y(x)}:=\{z\in \hbox{$\calX $ } \,:\, |\langle z ,y(x) \rangle -
\langle x,y(x) \rangle  | < \delta\}.
$$
Finitely many 
neighborhoods of affine subspaces
$B_{x_1}^{\delta,y_1(x_1)},\ldots, B_{x_m}^{\delta,y_m(x_m)}$ cover $C$ by 
compactness.
Observe that asymptotically we reduce the calculations for the ``left-hand side" to one ball:
\begin{equation*}
\begin{aligned}
\label{}
\limsup_{n\ra\infty}\frac1{r_n} \log \mu_n (C)
&\le 
\limsup_{n\ra\infty}\frac1{r_n} \log \sum_{i = 1}^m\mu_n (B_{x_i}^{\delta,y_i(x_i)})
\cr
&= 
\limsup_{n\ra\infty}\frac1{r_n} \log\big( 
m\sup_{i\in\{1,\ldots m\}}\mu_n (B_{x_i}^{\delta,y_i(x_i)})
\big)\cr
&= 
\limsup_{n\ra\infty}\bigg[\frac1{r_n} \log 
m + \frac1{r_n}\log\sup_{i\in\{1,\ldots m\}}\mu_n (B_{x_i}^{\delta,y_i(x_i)})
\bigg]\cr
&= 
\limsup_{n\ra\infty}\frac1{r_n}\log\sup_{i\in\{1,\ldots m\}}
\mu_n (B_{x_i}^{\delta,y_i(x_i)}).
\end{aligned}
\end{equation*}

Now,
\begin{equation*}
\begin{aligned}
\label{}
\mu_n (B_{x_i}^{\delta,y_i(x_i)})
&=
\int^{}_{B_{x_i}^{\delta,y_i(x_i)}}\mu_n(z) 
\cr
&=
\int^{}_{B_{x_i}^{\delta,y_i(x_i)}}
e^{r_n\langle z,y_i(x_i) \rangle} 
e^{-r_n\langle z,y_i(x_i) \rangle}
\mu_n 
\cr
&\le
e^{\delta-r_n\langle x_i,y_i(x_i) \rangle}
\int^{}_{B_{x_i}^{\delta,y_i(x_i)}}
e^{r_n\langle z,y_i(x_i) \rangle }
\mu_n 
\cr
&\le
e^{\delta-r_n\langle x_i,y_i(x_i) \rangle}
\int^{}_{\calX}
e^{r_n\langle z,y_i(x_i) \rangle }
\mu_n
\end{aligned}
\end{equation*}
(integration in the $z$ variable).
Taking log and the limit,
\begin{equation*}
\begin{aligned}
\label{}
\limsup\frac1{r_n}\log\mu_n (B_{x_i}^{\delta,y_i(x_i)})
&\le -\langle x_i,y_i(x_i) \rangle+\delta+p(y_i(x_i))
\le -\pstar(x_i)+2\delta\le -\inf_C\pstar+2\delta.
\end{aligned}
\end{equation*}
Letting $\delta$ tend to zero completes the argument.

\subsection{Exponential tightness}
\label{ExpTightSubSec}

\begin{definition}
\label{}
A sequence of probability measures $\{\mu_n\} \subset P(\calX)$ is exponentially tight with normalization $r_n$ is for each $b\in(0, \infty 
) $ there exists a compact set $K_b\subset  \hbox{$\calX $ } $ such that 
$$
\limsup\frac1{r_n}\log \mu_n ( \hbox{$\calX $ } \setminus K_b)\le - b.
$$
\end{definition}

\begin{remark} {\rm
\label{}
The point of course is that $K_b$ is independent of $n $.
} \end{remark}

\begin{remark} {\rm
\label{}
Note, of course, that exponential tightness is automatic if $\calX$ is compact (or if there is a compact set containing the support of all the $\mu_n$)!
In particular, note that $\calX=P(B)$ (for some compact (finite-dimensional) 
manifold $B$) is a compact set by Prokhorov's theorem
\cite[p. 43]{Villani:oldandnew},\cite[Theorem 1.3]{AmbGig}: Let \hbox{$\calX $ } be a Polish space and
$P\subset P(\calX)$; then $P$ is pre-compact for the week topology if and only if for every $\epsilon>0$ there is a compact set $K_\eps\subset \hbox{$\calX $ } $ such that $\mu( \hbox{$\calX $ } \setminus K_\eps) \le \epsilon$ for all $\mu\in P$.
} \end{remark}

\begin{lemma}
\label{}
Suppose that the large deviation upper bound inequality holds
for $(\mu_n,r_n)$ for all compact sets. Suppose also that sequence of probability measures $\{\mu_n\} \subset P(\calX)$ is exponentially tight with normalization $r_n$. Then the large deviation upper bound inequality holds.
\end{lemma}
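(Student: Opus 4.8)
The plan is to run the standard \emph{compact-exhaustion} argument: a closed set differs from a suitable compact set only by a tail whose mass, by exponential tightness, decays faster than $e^{-br_n}$ for every $b$, and hence cannot affect the $\limsup$. Throughout, let $I$ denote the lower semicontinuous rate function for which the weak upper bound (the one for compact sets) is assumed to hold, and let $C\subset\calX$ be an arbitrary closed set.

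First I would fix a number $b\in(0,\infty)$ and invoke exponential tightness to obtain a compact set $K_b\subset\calX$ with $\limsup\frac1{r_n}\log\mu_n(\calX\setminus K_b)\le -b$. Writing $C=(C\cap K_b)\cup(C\setminus K_b)$ gives the crude bound $\mu_n(C)\le \mu_n(C\cap K_b)+\mu_n(\calX\setminus K_b)$. The next step is the elementary inequality already used in \S\ref{uppbdsubsec}: for nonnegative sequences $\{a_n\},\{c_n\}$,
$$
\limsup_{n\ra\infty}\frac1{r_n}\log(a_n+c_n)\le
\max\Big\{\limsup_{n\ra\infty}\frac1{r_n}\log a_n,\ \limsup_{n\ra\infty}\frac1{r_n}\log c_n\Big\},
$$
which follows from $\log(a_n+c_n)\le\log 2+\max\{\log a_n,\log c_n\}$ together with $\frac{\log 2}{r_n}\to 0$ (recall $r_n\nearrow\infty$). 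Applying this to the above decomposition yields
$$
\limsup_{n\ra\infty}\frac1{r_n}\log\mu_n(C)\le
\max\Big\{\limsup_{n\ra\infty}\frac1{r_n}\log\mu_n(C\cap K_b),\ -b\Big\}.
$$

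Now $C\cap K_b$ is a closed subset of the compact set $K_b$, hence compact, so the hypothesis (the weak upper bound) applies and gives $\limsup\frac1{r_n}\log\mu_n(C\cap K_b)\le -\inf_{C\cap K_b}I\le -\inf_C I$. Combining, $\limsup\frac1{r_n}\log\mu_n(C)\le\max\{-\inf_C I,-b\}$. Since $b\in(0,\infty)$ was arbitrary, letting $b\ra\infty$ produces $\limsup\frac1{r_n}\log\mu_n(C)\le -\inf_C I$, which is exactly the large deviation upper bound for the arbitrary closed set $C$.

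I do not expect a real obstacle here. The only points needing a little care are the validity of the ``$\limsup$ of a sum'' inequality — this is precisely where $r_n\ra\infty$ is used — and the degenerate case $\inf_C I=+\infty$ (e.g.\ $C=\emptyset$), in which $\max\{-\infty,-b\}=-b\ra-\infty$ still closes the argument. Everything else is bookkeeping.
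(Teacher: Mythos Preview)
Your proof is correct and follows essentially the same approach as the paper: the identical decomposition $\mu_n(C)\le\mu_n(C\cap K_b)+\mu_n(\calX\setminus K_b)$, the same $\limsup$-of-a-sum inequality, and the same application of the weak upper bound to the compact set $C\cap K_b$. The only cosmetic difference is in the final step: the paper picks a single $b>\inf_C I$ (arguing the infimum is finite), whereas you let $b\to\infty$, which is arguably cleaner since it handles the degenerate case $\inf_C I=+\infty$ without a separate remark.
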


\begin{proof}
Then equality hold for all compact sets by assumption. So consider a close set $F$ that is not necessarily compact. Of course, $\mu_n (F) \le \mu_n (F\cap K_b) + \mu_n ( \hbox{$\calX $ } \setminus K_b) $. This is a very coarse inequality (since 
$\calX \setminus K_b$ is a large set!), but it does the job since 
$\mu_n(  \hbox{$\calX $ } \setminus K_b)$ is uniformly small and $F\cap K_b$ is compact (so we can apply to large deviation upper bound to it):
\begin{equation*}
\begin{aligned}
\label{}
\limsup\frac1{r_n}\log \mu_n (F)
&\le
\limsup\frac1{r_n}\log [\mu_n (F\cap K_b) + \mu_n ( \hbox{$\calX $ } \setminus K_b)] 
\cr
&\le
\max\{-b,\limsup\frac1{r_n}\log \mu_n (F\cap K_b)\}
\cr
&\le
\max\{-b,-\inf_{F\cap K_b} I\}
\cr
&\le
\max\{-b,-\inf_{F} I\},
\cr
\end{aligned}
\end{equation*}
and by choosing $b>\inf_F I$ (recall $F$ is closed and $I$ lsc and so the infimum is attained and $I>- \infty 
$ by assumption) we get
$\limsup\frac1{r_n}\log \mu_n (F)
\le
-\inf_{F} I
$.
\end{proof}

Thus, to complete the proof of the upper bound it remains to show:
\begin{lemma}
\label{exptighlemma}
The 
sequence of probability measures $\{\mu_n\} \subset P(\calX)$ is exponentially tight with normalization $r_n$.
\end{lemma}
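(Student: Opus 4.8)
The plan is to take $K_b$ to be a large closed cube about the origin and bound the mass of its complement by a union bound over coordinate half-spaces, estimating each half-space by the exponential Markov (Chebyshev) inequality already used in \S\ref{uppbdsubsec}. Here $\calX=\calX^*=\RR^n$; write $e_1,\dots,e_n$ for the standard basis and $x_j=\langle x,e_j\rangle$ for the coordinates.

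Fix $b\in(0,\infty)$ and, for a parameter $M>0$ to be chosen, set $K_b:=[-M,M]^n$, a compact subset of $\RR^n$. Since $\RR^n\setminus K_b\subset\bigcup_{j=1}^n\big(H_j^+\cup H_j^-\big)$ with $H_j^+:=\{x:x_j>M\}$ and $H_j^-:=\{x:x_j<-M\}$, subadditivity gives $\mu_n(\RR^n\setminus K_b)\le\sum_{j=1}^n\big(\mu_n(H_j^+)+\mu_n(H_j^-)\big)$. On $H_j^+$ one has $1\le e^{r_n(x_j-M)}$, so
$$
\mu_n(H_j^+)=\int_{H_j^+}\mu_n(x)\le e^{-r_nM}\int_{\RR^n}e^{r_n\langle e_j,x\rangle}\mu_n(x),
$$
and taking $\tfrac1{r_n}\log$ and letting $n\to\infty$ yields $\limsup_n\tfrac1{r_n}\log\mu_n(H_j^+)\le -M+p(e_j)$; the same argument with $-e_j$ in place of $e_j$ gives $\limsup_n\tfrac1{r_n}\log\mu_n(H_j^-)\le -M+p(-e_j)$.

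Combining the $2n$ estimates, and using that $\tfrac1{r_n}\log(2n)\to0$ together with the elementary fact that the $\limsup$ of a finite maximum of sequences equals the maximum of the individual $\limsup$s, we obtain
$$
\limsup_n\frac1{r_n}\log\mu_n(\RR^n\setminus K_b)\le -M+\max_{1\le j\le n}\max\{p(e_j),\,p(-e_j)\}.
$$
The finitely many numbers $p(\pm e_j)$ are finite by the standing hypothesis that $p$ is well defined (hence finite) on all of $\calX^*$, so setting $M:=b+\max_{1\le j\le n}\max\{p(e_j),p(-e_j)\}$ makes the right-hand side $\le -b$, which is exactly exponential tightness with normalization $r_n$. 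There is no real obstacle in this argument: the hypothesis that $p$ is everywhere finite does all the work, and the only minor points to check are the interchange of $\limsup$ with the finite sum (equivalently, bounding the sum of $2n$ terms by $2n$ times their maximum and noting $r_n^{-1}\log(2n)\to0$) and the behavior of $\limsup$ under finite maxima, both routine.
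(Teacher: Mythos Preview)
Your proof is correct and follows essentially the same approach as the paper's own proof: bound the complement of a large cube by a union of coordinate half-spaces, apply the exponential Chebyshev inequality with $\theta=\pm e_j$ to each half-space, and use finiteness of $p$ to choose the cube large enough. Your write-up is in fact more explicit and careful than the paper's (which leaves the choice of $\theta$ and the final union-bound step somewhat implicit), but the underlying argument is identical.
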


\begin{proof}
For this proof we assume $\calX=\RR^n$ (otherwise, one needs to incorporate the exponential tightness assumption into the assumptions of Theorem \ref{GEThm}). 
This follows directly from the assumption that a moment generating function exists. Indeed, choose a coordinate $x_i$ and bound the tail in that direction:
$$
\begin{aligned}
\mu_n\{x_i\ge b\}
&=
\int_{\{x_i\ge b\}}
e^{-r_n\langle \theta,x \rangle}
e^{r_n\langle \theta,x \rangle} \mu_n(x)
\cr
&\le
e^{-r_nb|\theta|}
\int_{\{x_i\ge b\}}
e^{r_n\langle \theta,x \rangle} \mu_n(x)
\cr
&\le
e^{-r_nb|\theta|}
\int_{\calX}
e^{r_n\langle \theta,x \rangle} \mu_n(x)
\cr
&=
e^{-r_nb|\theta|}
p_n(\theta)
\cr\end{aligned}
$$
Now,
$$
\frac1{r_n}\log p_n(\theta)=p(\theta)+o(1), 
$$
so
$$
p_n(\theta)= e^{r_n(p(\theta)+o(1))}.
$$
So, fixing $\theta$ and then choosing $b>0$ sufficiently large ($p(\theta)$ is finite!),
and summing over all coordinate directions concludes the proof.
\end{proof}

\subsection{The lower bound }

Our goal is to show that $$
\liminf_{n\ra\infty}\frac1{r_n} \log \mu_n (O)\ge -\inf_O  \pstar , \qq \forall\; O \h{\ \ open in \ } \calX.
$$
Fix an open set $O $ and a point $z\in O $ where $\inf_O  \pstar$ is attained
up to some $\eps$. First, as in the proof of the upper bound, we will show that
$\frac1{r_n} \log \mu_n (O)$ is essentially equal to $\frac1{r_n} \log \mu_n (B_\delta)$
for some ball $B_\delta$ containing $z $.
Indeed, for any $B$ and any $\sigma\in\calX^*$,
\begin{equation}
\begin{aligned}
\label{munBfirsteq}
\frac1{r_n}\log\mu_n (B)
&= 
\frac1{r_n}\log \int_{B}
e^{-r_n\langle z,y_i \rangle}
e^{r_n\langle z,y_i \rangle} 
\mu_n (z)
\end{aligned}
\end{equation}
We want to do essentially the same computation as for the upper bound, 
except that now of course the inequality
$$
\int^{}_{B}
e^{r_n\langle z,y_i \rangle }
\mu_n 
\le \int^{}_{\calX }
e^{r_n\langle z,y_i \rangle }
\mu_n 
$$
goes in the wrong direction. To remedy that, we need to identify some way of localizing the integral so that, at least asymptotically, the integrals are equal.
The key is to notice that the relevant point for localizing is
$$
\sigma: = (\nabla p)^ {-1} (z).
$$
Consider
$$
e^{\langle \sigma,\,\cdot\, \rangle}\mu_n,
$$
or, rather, the associated probability measures
$$
\nu_{\sigma,n}: =
e^{\langle \sigma,\,\cdot\, \rangle}\mu_n\bigg/\int_{\calX}
e^{\langle \sigma,y \rangle}\mu_n(y).
$$

\begin{lemma}
\label{ldpnulemma}
The sequence of probability measures $\{\nu_{\sigma,n}\}$ localizes 
around $z=\nabla p(\sigma) $. More precisely, we have the upper bound
large deviation inequality for $\{(\nu_{\sigma,n}, r_n)\}$
with rate function $ \pstar - \langle \sigma,\,\cdot\, \rangle+ p (\sigma).$ 
\end{lemma}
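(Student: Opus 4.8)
The plan is to deduce the claimed weak (compact-set) upper bound for $\{(\nu_{\sigma,n},r_n)\}$ from the weak upper bound for $\{(\mu_n,r_n)\}$ already established in \S\ref{uppbdsubsec}, by relating the moment generating function of $\nu_{\sigma,n}$ to that of $\mu_n$ via an explicit shift. First I would compute the normalization constant in the definition of $\nu_{\sigma,n}$: since $p$ is the moment generating function of $\{\mu_n\}$, we have $\frac1{r_n}\log\int_\calX e^{r_n\langle\sigma,y\rangle}\mu_n(y)\to p(\sigma)$, so this denominator contributes exactly the additive constant $p(\sigma)$ at the level of rate functions. Next I would show that the moment generating function of $\{\nu_{\sigma,n}\}$ is $\theta\mapsto p(\theta+\sigma)-p(\sigma)$: indeed
\begin{equation*}
\frac1{r_n}\log\int_\calX e^{r_n\langle\theta,y\rangle}\nu_{\sigma,n}(y)
=\frac1{r_n}\log\int_\calX e^{r_n\langle\theta+\sigma,y\rangle}\mu_n(y)-\frac1{r_n}\log\int_\calX e^{r_n\langle\sigma,y\rangle}\mu_n(y),
\end{equation*}
and both terms converge, the first to $p(\theta+\sigma)$ and the second to $p(\sigma)$.

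Having identified the moment generating function of $\{\nu_{\sigma,n}\}$ as $q(\theta):=p(\theta+\sigma)-p(\sigma)$, I would then invoke the compact-set upper bound argument of \S\ref{uppbdsubsec} verbatim for the sequence $\{\nu_{\sigma,n}\}$ (that argument uses only the existence of the moment generating function, not its differentiability), which yields the large deviation upper bound for compact sets with rate function $q^\star$. It remains to compute this Legendre transform:
\begin{equation*}
q^\star(x)=\sup_\theta\big[\langle x,\theta\rangle-p(\theta+\sigma)+p(\sigma)\big]
=\sup_{\eta}\big[\langle x,\eta-\sigma\rangle-p(\eta)\big]+p(\sigma)
=p^\star(x)-\langle\sigma,x\rangle+p(\sigma),
\end{equation*}
by the substitution $\eta=\theta+\sigma$. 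This is exactly the rate function asserted in the lemma, and since $p^\star(x)-\langle\sigma,x\rangle+p(\sigma)=p^\star(x)-[\langle\sigma,x\rangle-p(\sigma)]\ge 0$ by \eqref{LegBasicProp}, it is nonnegative; moreover it vanishes precisely where $p^\star(x)+p(\sigma)=\langle\sigma,x\rangle$, i.e.\ at $x=\nabla p(\sigma)$ by the equality case of the Legendre inequality, which is the "localization around $z=\nabla p(\sigma)$" claim.

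The only genuinely delicate point is making sure that the weak upper bound of \S\ref{uppbdsubsec} applies to $\{\nu_{\sigma,n}\}$ — that is, that $q$ is a bona fide moment generating function (finite near the origin). This is immediate from the fact that $p$ is finite near $\sigma$ (here one uses that $p$ is finite everywhere, which follows from Gateaux differentiability of $p$ in Theorem \ref{GEThm}, or from the convexity lemma once $p<\infty$ near $0$), so that $q(\theta)=p(\theta+\sigma)-p(\sigma)$ is finite for $\theta$ in a neighborhood of $0$. I expect no real obstacle beyond this bookkeeping; the heart of the matter is simply the observation that tilting $\mu_n$ by $e^{r_n\langle\sigma,\cdot\rangle}$ shifts the moment generating function by $\sigma$, and shifts the rate function by the corresponding affine function, which is precisely what is needed in the next step of the proof of Theorem \ref{GEThm} where one must localize the lower-bound integral near $z=\nabla p(\sigma)$.
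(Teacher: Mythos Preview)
Your proposal is correct and follows essentially the same route as the paper: compute the moment generating function of the tilted measures as $q(\theta)=p(\theta+\sigma)-p(\sigma)$, identify its Legendre transform as $p^\star-\langle\sigma,\cdot\rangle+p(\sigma)$, and then invoke the already-proved upper bound machinery. One small point: you explicitly restrict to the \emph{compact-set} upper bound of \S\ref{uppbdsubsec}, but the subsequent application in Corollary~\ref{nuncor} uses the closed (non-compact) set $\calX\setminus B_\delta^z$, so you also need exponential tightness of $\{\nu_{\sigma,n}\}$ to upgrade to all closed sets; this is automatic from Lemma~\ref{exptighlemma} once you have established (as you do) that $q$ is a bona fide moment generating function, and the paper's proof simply phrases this as ``all the assumptions of Theorem~\ref{GEThm} are met for this generating function since they are met for $p$.''
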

\begin{remark} {\rm
\label{}
Recall that 
\begin{equation}
\begin{aligned}
\label{ppstar2eq}
 \pstar (z) =\langle \sigma, z\rangle - p (\sigma),
\end{aligned}
\end{equation}
and
\begin{equation}
\begin{aligned}
\label{ppstareq}
 \pstar (\,\cdot\,) +  p (\sigma) > \langle \sigma, \,\cdot\,\rangle
\qq \h{away from $z$}.
\end{aligned}
\end{equation}
We postpone the proof of Lemma  \ref{ldpnulemma}  to the end of the section.

Thus, the rate function in the statement is nonnegative with a unique zero at $z $. Thus the desired localization:
} \end{remark}
\begin{corollary}
\label{nuncor}
For any $\delta> 0 $,
$$
\lim_n  \nu_{\sigma,n}( B_\delta^z)=1.
$$
\end{corollary}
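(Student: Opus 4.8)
The plan is to deduce this directly from the upper-bound large deviation inequality supplied by Lemma \ref{ldpnulemma}, combined with the fact that $\nu_{\sigma,n}$ is a probability measure for every $n$. Write $J := \pstar - \langle \sigma,\,\cdot\,\rangle + p(\sigma)$ for the rate function appearing in Lemma \ref{ldpnulemma}. By the remark following that lemma, $J\ge 0$ everywhere and, using \eqref{ppstareq}, $J(w) > 0$ for every $w\ne z$; in particular the complement $\calX\setminus B_\delta^z$ is a closed set on which $J$ is strictly positive. So first I would apply the upper-bound inequality to the closed set $C := \calX\setminus B_\delta^z$.

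The delicate point is that, a priori, $\inf_C J$ could still be $0$ even though $J$ is pointwise positive on $C$, because $C$ is not compact and $\pstar$ (hence $J$) is only lower semicontinuous, not good. This is the step I expect to be the main obstacle, and it is exactly the kind of gap the authors flagged in the proof of Lemma \ref{ILemma}. There are two natural ways around it. One: invoke exponential tightness of $\{\mu_n\}$ (Lemma \ref{exptighlemma}), which transfers to $\{\nu_{\sigma,n}\}$ since the tilting factor $e^{\langle\sigma,\,\cdot\,\rangle}$ together with its normalization changes the rate function only by the affine term $-\langle\sigma,\,\cdot\,\rangle+p(\sigma)$, so that large tails are still exponentially suppressed; this forces the relevant infimum to be taken effectively over a compact set, where lower semicontinuity gives $\inf > 0$. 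Two: prove a short quantitative lemma showing that $J$ grows at least linearly at infinity — this follows because $\pstar$ is the Legendre transform of a function $p$ that is finite in a neighborhood of the origin, hence $\pstar(w)\ge \langle w,\theta\rangle - p(\theta)$ for all small $\theta$, which already beats the single linear term $\langle\sigma,w\rangle$ in every direction. Either route yields $\inf_C J =: \varepsilon' > 0$.

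Once $\inf_C J = \varepsilon' > 0$ is in hand, the conclusion is immediate: the upper-bound inequality gives
$$
\limsup_{n\ra\infty}\frac1{r_n}\log\nu_{\sigma,n}(C)\le -\varepsilon' < 0,
$$
so $\nu_{\sigma,n}(C) \le e^{-r_n\varepsilon'/2}$ for all large $n$, and since $r_n\nearrow\infty$ we get $\nu_{\sigma,n}(C)\ra 0$. Because $\nu_{\sigma,n}$ has total mass $1$, this is the same as $\nu_{\sigma,n}(B_\delta^z) = 1 - \nu_{\sigma,n}(C)\ra 1$, which is the claim. I would present the argument via the exponential-tightness route, as it parallels the structure already used in \S\ref{ExpTightSubSec} and keeps the exposition self-contained; the linear-growth estimate could be relegated to a one-line remark as an alternative.
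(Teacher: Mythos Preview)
Your approach is the same as the paper's: apply the upper-bound large deviation inequality from Lemma \ref{ldpnulemma} to the closed set $\calX\setminus B_\delta^z$, deduce that its $\nu_{\sigma,n}$-mass decays exponentially, and use that $\nu_{\sigma,n}$ is a probability measure. The paper simply asserts $\inf_{x\notin B_\delta^z}[\pstar(x)-\langle\sigma,x\rangle+p(\sigma)]\ge C(\delta)>0$ by citing \eqref{ppstareq} and moves on, so your discussion of \emph{why} this infimum is strictly positive is in fact more careful than the original. One small correction to your second route: to dominate the linear term $\langle\sigma,w\rangle$ at infinity you need $p$ finite in a neighborhood of $\sigma$, not merely of the origin --- choose $\theta=\sigma+\eps w/|w|$ to get $J(w)\ge \eps|w|-p(\sigma+\eps w/|w|)+p(\sigma)$. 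This is available here since the hypotheses of Theorem \ref{GEThm} make $p$ finite (indeed differentiable) everywhere, and then convexity gives $p$ bounded on the sphere $|\theta-\sigma|=\eps$, so $J$ has compact sublevel sets and the infimum over the closed complement is attained and positive.
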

\begin{proof}
It suffices to show that
$$
\limsup_n\frac1{r_n}\log   \nu_{\sigma,n}(\calX\setminus  B_\delta^z)<0.
$$
By the large deviation  upper bound inequality for $\{(\nu_{\sigma,n}, r_n)\}$  and  \eqref{ppstareq} (note $\calX\setminus  B_\delta^z$ is closed!),
$$
\limsup_n\frac1{r_n}\log   \nu_{\sigma,n}(\calX\setminus  B_\delta^z)\le 
-\inf_{x\not\in B_\delta^z}[\pstar(x) - \langle \sigma,x \rangle+ p (\sigma)]
\le -C,
$$
for some $C=C (\delta) $, 
at desired.
\end{proof}
Thus, as we wished,
$$
\int^{}_{B_\delta^z}
\nu_{\sigma,n} 
=
\int^{}_{\calX}
\nu_{\sigma,n} +o(1)
=
1+o(1).
$$
Now we are ready to go back to \eqref{munBfirsteq},
\begin{equation*}
\begin{aligned}
\label{}
\frac1{r_n}\log\mu_n (B_\delta^z)
&= 
\frac1{r_n}\log 
\int_{B_\delta^z}
e^{-r_n\langle \sigma,x \rangle}
e^{r_n\langle \sigma,x \rangle} 
\mu_n (z)
\cr
&= 
\frac1{r_n}\log 
\int_{B_\delta^z}
e^{-r_n\langle \sigma,x \rangle}
p_n(\sigma)\nu_{\sigma,n}(x)
\cr
&= 
p(\sigma)+o(1)+\frac1{r_n}\log 
\int_{B_\delta^z}
e^{-r_n\langle \sigma,x \rangle}
\nu_{\sigma,n}(x)
\cr
&\ge 
p(\sigma)+o(1)+\frac1{r_n}\log 
\inf_{B_\delta^z}
e^{-r_n\langle \sigma,x \rangle}
+\frac1{r_n}\log \int_{B_\delta^z}
\nu_{\sigma,n}(x)
\cr
&\ge 
p(\sigma)+o(1)-\langle \sigma,z \rangle-\delta
+o(1)
\cr
&= 
- \pstar(z)+o(1)-\delta,
\cr
\end{aligned}
\end{equation*} 
where we used Corollary \ref{nuncor} and  \eqref{ppstar2eq}. 
Letting first $n$ go to infinity and then $\delta$ go to zero concludes the proof.

\begin{proof}
[Proof of  Lemma  \ref{ldpnulemma}]
First, observe that the Legendre transform of
$$
p (\,\cdot\, +\sigma) - p (\sigma)
$$
is
\begin{equation}
\begin{aligned}
\label{pstar3eq}
 \pstar - \langle \sigma,\,\cdot\, \rangle+ p (\sigma).
\end{aligned}
\end{equation}
Thus, it suffices to show that the moment generating function of $\{\nu_{\sigma,n}\}$
is
$
p (\,\cdot\, +\sigma) - p (\sigma)
$.
Indeed,
\begin{equation*}
\begin{aligned}
\label{}
\lim \frac1{r_n}\log \int^{}_{ \calX } e^{r_n\langle \theta , x \rangle }
\nu_{\sigma,n} (x)
&=
\lim \frac1{r_n}\log \int^{}_{ \calX } e^{r_n\langle \theta , x \rangle }
e^{\langle \sigma,x \rangle}\mu_n(x)\bigg/p_n(\sigma)
\cr&=
-p(\sigma)\lim \frac1{r_n}\log \int^{}_{ \calX } e^{r_n\langle \theta+\sigma , x \rangle }\mu_n(x)
\cr
&=
-p(\sigma)+p(\theta+\sigma).
\end{aligned}
\end{equation*}
All the assumptions of  Theorem \ref{GEThm}
 are met for this generating function since they are met for $p$. 
Under those assumptions we have already established the upper bound inequality in
Theorem \ref{GEThm}. Therefore, 
we have the upper bound
large deviation inequality for $\{(\nu_{\sigma,n}, r_n)\}$
with rate function  \eqref{pstar3eq}. 
 \end{proof}

\section{LDP without moment generating functions}
\label{LDPwithoutSec}

Sometimes, a large deviation principle holds even when the assumptions of Theorem \ref{GEThm} are not satisfied. For one, a moment generating function may not exist. Also, the rate function can sometimes be nonconvex (not that Theorem \ref{GEThm} guarantees the rate function will be convex, being the Legendre transform of the moment generating function).  The following result nevertheless characterizes large deviation principles and gives a useful tool to show their existence.
It is sort of a converse for Lemma \ref{ILemma}. 

\begin{prop}
\label{ldpprop}
Let \hbox{$\calX $ } be a compact metric space. LDP$\,(\mu_n , r_n)$ if and only if
\begin{equation}
\begin{aligned}
\label{grateeq}
\lim _{d\ra 0}\limsup_{n\ra  \infty } \frac1{r_n}\log \mu_n (B_d (x))=
\lim _{d\ra 0}\liminf_{n\ra  \infty } \frac1{r_n}\log \mu_n (B_d (x)),
\q \forall x\in  \hbox{$\calX $}.
\end{aligned}
\end{equation}
The rate function is then equal to minus \eqref{grateeq}.
\end{prop}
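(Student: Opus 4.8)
\bigskip
\noindent\textbf{Proof proposal.}

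The plan is to prove the two implications separately and, along the way, to read off the rate function. First I would record the monotonicity used throughout: since $\mu_n(B_{d'}(x))\le\mu_n(B_d(x))$ for $d'\le d$, both $\limsup_n\tfrac1{r_n}\log\mu_n(B_d(x))$ and $\liminf_n\tfrac1{r_n}\log\mu_n(B_d(x))$ are nondecreasing in $d$, so the one-sided limits in \eqref{grateeq} exist in $[-\infty,0]$ (the bound $0$ because $\mu_n(B_d(x))\le1$) and coincide with the corresponding infima over $d>0$.

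For ``$\Rightarrow$'', assume LDP$(\mu_n,r_n)$ with lower semicontinuous rate function $I$. Applying the closed-set bound of Definition \ref{LDPDef} to $\overline{B_d(x)}$ (together with $\mu_n(B_d(x))\le\mu_n(\overline{B_d(x)})$) and the open-set bound to $B_d(x)\ni x$ yields, for every $d>0$,
$$
\begin{aligned}
-\inf_{\overline{B_d(x)}}I\;&\ge\;\limsup_n\tfrac1{r_n}\log\mu_n(B_d(x))\\
&\ge\;\liminf_n\tfrac1{r_n}\log\mu_n(B_d(x))\;\ge\;-\inf_{B_d(x)}I\;\ge\;-I(x).
\end{aligned}
$$
It then remains only to check that $\inf_{\overline{B_d(x)}}I\to I(x)$ as $d\to0$: this quantity is $\le I(x)$ and nondecreasing as $d\downarrow0$, and picking $y_d\in\overline{B_d(x)}$ nearly attaining the infimum gives $y_d\to x$, so lower semicontinuity forces the limit to be $\ge I(x)$. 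Letting $d\to0$ squeezes both one-sided limits in \eqref{grateeq} between $-I(x)$ and $-I(x)$; in particular \eqref{grateeq} holds with common value $-I(x)$.

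For ``$\Leftarrow$'', assume \eqref{grateeq} and define $I$ to be minus its common value, so $I\ge0$ by the monotonicity remark; I must then check $I$ is lower semicontinuous and verify the two LDP inequalities. Lower semicontinuity: if $x_k\to x$, then for fixed $d>0$ we have $B_{d/2}(x_k)\subset B_d(x)$ once $k$ is large, so $I(x_k)\ge-\limsup_n\tfrac1{r_n}\log\mu_n(B_{d/2}(x_k))\ge-\limsup_n\tfrac1{r_n}\log\mu_n(B_d(x))$; taking $\liminf_k$ and then $\sup_{d>0}$ gives $\liminf_k I(x_k)\ge I(x)$. LDP lower bound: for open $O$ and $x\in O$, $B_d(x)\subset O$ for $d$ small, so $\liminf_n\tfrac1{r_n}\log\mu_n(O)\ge-I(x)$, and $\sup_{x\in O}$ gives $-\inf_O I$. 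LDP upper bound: fix $C$ closed, hence compact since $\calX$ is; if $\inf_C I=0$ there is nothing to prove (as $\mu_n(C)\le1$), so take $M<\inf_C I$. For each $x\in C$ we have $-I(x)<-M$, and since $-I(x)$ is the infimum over $d$ of $\limsup_n\tfrac1{r_n}\log\mu_n(B_d(x))$ we may choose $d_x$ with $\limsup_n\tfrac1{r_n}\log\mu_n(B_{d_x}(x))<-M$. Extracting a finite subcover $B_{d_{x_1}}(x_1),\dots,B_{d_{x_m}}(x_m)$ of $C$, bounding $\mu_n(C)\le m\max_i\mu_n(B_{d_{x_i}}(x_i))$, taking $\tfrac1{r_n}\log$, using $r_n\to\infty$ to discard the $\log m$ term, and using $\limsup_n\max_{i\le m}a_n^{(i)}=\max_{i\le m}\limsup_n a_n^{(i)}$, gives $\limsup_n\tfrac1{r_n}\log\mu_n(C)\le-M$; now let $M\uparrow\inf_C I$.

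I expect the LDP upper bound in ``$\Leftarrow$'' to be the crux, and it is the only place where the hypothesis is genuinely used: compactness of $\calX$ is precisely what makes the finite-subcover argument go through, and without it one recovers only the weak upper bound over compact sets --- the gap that exponential tightness in \S\ref{ExpTightSubSec} is designed to close. Two minor points also deserve a word of care: the interchange $\limsup_n\max_i=\max_i\limsup_n$ is valid only because the index set is finite; and the uniqueness of the rate function of an LDP --- which makes the phrase ``the rate function is minus \eqref{grateeq}'' well posed --- holds here because a compact metric space is regular, so Lemma \ref{ILemma} applies (and is in any case re-derived by the ``$\Rightarrow$'' step above).
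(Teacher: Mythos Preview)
Your proof is correct and follows essentially the same route as the paper: the ``$\Rightarrow$'' direction squeezes the two one-sided limits between $-I(x)$ and $-\inf_{\overline{B_d(x)}}I$ and invokes lower semicontinuity, while the ``$\Leftarrow$'' direction verifies lower semicontinuity of the candidate rate function, gets the open-set bound from a single ball, and gets the closed-set bound from a finite subcover of the compact $C$. The only cosmetic differences are that the paper phrases lower semicontinuity via openness of superlevel sets and handles the possibility $I(x)=\infty$ in the upper bound with the cutoff $\max\{-g(x)+\eps,-1/\eps\}$ rather than your threshold $M<\inf_C I$.
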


\begin{proof}
\noindent
$\ul{\Longleftarrow}$:\
Suppose \eqref{grateeq} holds, and denote either side by $-g(x)$. 
Note that $g$ is indeed a rate function: it is evidently not negative, and it is lower semicontinuous because the super level sets $\{g> a\}$ are open as if 
$g(x)>a$, i.e., $-g(x)<-a$, then because the limits in \eqref{grateeq}
are decreasing in $d$, then for some 
$B_d(x)$,
\begin{equation*}
\begin{aligned}
\label{}
\liminf_{n\ra  \infty } \frac1{r_n}\log \mu_n (B_d (x))<-a,
\end{aligned}
\end{equation*} 
so for every $y\in B_d (x)$ choosing $d'$ so that $B_{d'}(y)\subset B_d (x)$,

\begin{equation*}
\begin{aligned}
\label{}
\liminf_{n\ra  \infty } \frac1{r_n}\log \mu_n (B_{d'} (y))\le
\liminf_{n\ra  \infty } \frac1{r_n}\log \mu_n (B_d (x))<-a,
\end{aligned}
\end{equation*} 
so $-g(y)<-a$.

To prove the lower bound, let $O\in\calX$ be open, fix $\eps>0$, and let $x\in O$
be such that $g(x)\le\inf_O g+\eps$, 
and let $d>0$ be such that $B_d(x)\subset O$. Then
\begin{equation*}
\begin{aligned}
\liminf_{n\ra  \infty } \frac1{r_n}\log \mu_n (O)
&\ge
\liminf_{n\ra  \infty } \frac1{r_n}\log \mu_n (B_\delta (x))
\cr
&\ge 
\lim _{d\ra 0}
\liminf_{n\ra  \infty } \frac1{r_n}\log \mu_n (B_\delta (x))
\cr
&=-g(x)=\ge-\inf_O g-\eps.
\end{aligned}
\end{equation*}
Now let $\epsilon $ tend to zero.

To prove the upper bound, let $C\in\calX$ be closed.
Because $\calX$ is compact, so is $C$.
Similar to the proof of \S\ref{uppbdsubsec} we cover $F$ with
finitely many balls and then 
\begin{equation*}
\begin{aligned}
\label{}
\limsup_{n\ra\infty}\frac1{r_n} \log \mu_n (C)
&\le 
\limsup_{n\ra\infty}\frac1{r_n} \log \sum_{i = 1}^m\mu_n (B_{d_i}(x_i))
\cr
&= 
\limsup_{n\ra\infty}\frac1{r_n} \log\big( 
m\sup_{i\in\{1,\ldots m\}}\mu_n (B_{d_i}(x_i))
\big)\cr
&= 
\limsup_{n\ra\infty}\bigg[\frac1{r_n} \log 
m + \frac1{r_n}\log\sup_{i\in\{1,\ldots m\}}\mu_n (B_{d_i}(x_i))
\bigg]\cr
&= 
\limsup_{n\ra\infty}\frac1{r_n}\log\sup_{i\in\{1,\ldots m\}}\mu_n (B_{d_i}(x_i))
\cr
&= 
\max_{i\in\{1,\ldots m\}}\limsup_{n\ra\infty}\frac1{r_n}\log\mu_n (B_{d_i}(x_i))
\cr
&= 
\limsup_{n\ra\infty}\frac1{r_n}\log\mu_n (B_{d_1}(x_1)),
\end{aligned}
\end{equation*}
where the last equality is without loss of generality.
Fix $\eps>0$. 
By \eqref{grateeq}, for each $x\in  \hbox{$\calX $ } $ there exists $d(x,\eps)>0$  
such that 
$$
\limsup_{n\ra\infty}\frac1{r_n}\log\mu_n (B_{d(x,\eps)}(x))\le \max\{-g(x)+\eps,-1/\eps\}
$$
(note that if $g $  is finite one can simplify the right-hand side to $-g(x)+\eps$).
Now, cover $C$ with the balls $\cup_{x\in F}B_{d(x,\eps)}(x)$; then, by compactness of $C$ we may choose a finite sub-cover, and thus, we may assume that we have chosen
$d_1=d(x_1,\eps)$!
Thus,
\begin{equation*}
\begin{aligned}
\label{}
\limsup_{n\ra\infty}\frac1{r_n} \log \mu_n (C)
&\le  
\limsup_{n\ra\infty}\frac1{r_n}\log\mu_n (B_{d_1}(x_1))
\cr
&\le 
\max\{-g(x_1)+\eps,-1/\eps\},
\end{aligned}
\end{equation*}
and letting $\epsilon $ tend to zero,
\begin{equation*}
\begin{aligned}
\label{}
\limsup_{n\ra\infty}\frac1{r_n} \log \mu_n (C)
&\le 
-g(x_1)
\le 
-\inf_F g.
\end{aligned}
\end{equation*}

\noindent
$\ul{\Longrightarrow}$:\
Conversely, suppose LDP$\,(\mu_n , r_n)$ with rate function $I$.
By the large deviation lower bound,
$$
I(x)\ge \inf_O I\ge
-\liminf_{n\ra  \infty } \frac1{r_n}\log \mu_n (O),
$$
for any open set $O$ containing $x $, so putting $O=B_d(x)$ and supping
over $d$,
$$
I(x)\ge
-\lim _{d\ra 0}
\liminf_{n\ra  \infty } \frac1{r_n}\log \mu_n (B_d (x)).
$$
By the large deviation upper bound,
$$
-\limsup_{n\ra  \infty } \frac1{r_n}\log \mu_n (\overline{B_d (x)})
\ge
\inf_{\overline{B_d (x)}} I
,
$$
so 
$$
I(x)\ge
-\lim _{d\ra 0}
\liminf_{n\ra  \infty } \frac1{r_n}\log \mu_n (B_d (x))
\ge
-\lim _{d\ra 0}
\limsup_{n\ra  \infty } \frac1{r_n}\log \mu_n (\overline{B_d (x)})
\ge
\lim _{d\ra 0}\inf_{\overline{B_d (x)}} I
,
$$
and
it suffices to show that
$$
\lim_{d\ra 0}\inf_{\overline{B_d (x)}}I\ge I(x)
.$$
If not, there exists $x_k$ such that
$
\lim_k I(x_k)< I(x)
$
and $x_k\ra x$, contradicting lower semicontinuity of $I$.
\end{proof}

\section{Optimal transport}
\lb{OTSec}

The problem of optimally transporting a given probability measure $\mu\in P(X)$ 
(the source measure)
to another given probability measure $\nu\in P(Y)$ (the target  measure) has a long history, going back to Monge in the 18th century. 
It is the problem of finding a measurable  map $T:X\ra Y$ satisfying
\begin{equation}
\begin{aligned}
\label{Teq}
T_\# \mu=\nu
\end{aligned}
\end{equation}
and minimizing
$$
\int_X c(x,T(x))\mu(x),
$$
where $c:X\times Y\ra \RR$ is some given {\it cost function}, typically
$ c(x,y)=|x-y|^2$. This integral is the total cost, and $c(x,T(x))\mu(dx)$ is the infinitesimal cost of transporting $x$ to $T(x)$, with $\mu(dx)$ measuring the amount of mass at the source point $x$. By abuse of notation we denote the latter by $\mu(x)$ and not $d\mu(x)$ or  $\mu(dx)$.

As in previous sections, one should think of
$X=Y=\RR^n$ or 
$X=Y=P(\RR^d)$ as the typical examples in our course
for the underlying spaces. Typical examples for the measures to be transported  include uniform measures
 $1_\Omega$ (for a unit-volume set $\Omega $) and the empirical measures
\begin{equation*}
\begin{aligned}
\label{}
\delta^n (x_1,\ldots , x_n): =\frac1{n}\sum_{i = 1}^n\delta_{x_i}.
\end{aligned}
\end{equation*}
Choosing both the source and the target measures to be empirical measures with the same number of point masses (i.e., choosing $\mu$ and $\nu$ in the image of $\delta^n$
for the same $n $) gives rise to the so-called {\it discrete optimal transport problem. } The solution is then given by a permutation $\sigma\in S_n $ on the set of $n$ letters, satisfying
\begin{equation}
\begin{aligned}
\label{}
\sum_{i = 1}^nc(x_i,y_i)\le \sum_{i = 1}^nc(x_i,y_{\sigma(i)}).
\end{aligned}
\end{equation}
In the prototypical case of squared distance cost some cancellations give that
\begin{equation*}
\begin{aligned}
\label{}
\sum_{i = 1}^n|x_i-y_i|^2\le \sum_{i = 1}^n|x_i-y_{\sigma(i)}|^2
\end{aligned}
\end{equation*}
can be rewritten as
\begin{equation}
\begin{aligned}
\label{optdisceq}
\sum_{i = 1}^n-\langle x_i,y_i\rangle \le 
\sum_{i = 1}^n-\langle x_i,y_{\sigma(i)}\rangle .
\end{aligned}
\end{equation}
So, the cost $|x-y|^2$ is really `equivalent' to the cost $-\langle x,y \rangle$. More generally, if $c(x,y)=d(x,y)+f(x)+g(y)$ then $c$ and $d$ are equivalent: 
$$
\int_X c(x,T(x))\mu(x)=
\int_X d(x,T(x))\mu(x)+ \int_X f\mu+ \int_X g(T(x))\mu=
\int_X d(x,T(x))\mu(x)+ \int_X f\mu+ \int_Y g\nu
$$
(as $\int_X f\mu+ \int_Y g\nu$ is a constant completely determined by the ``data" $\mu,\nu$),
where in the last equation we used that
$$
\int_X g\circ T\mu=\int_Y g\nu,
$$
by  \eqref{Teq} \cite[(9)]{Villani}.
 
More generally, one can search for an optimal transportation {\it plan}. 
Given a product space, say $X\times Y$, equipped with projection maps $\pi_X:X\times Y\ra X$ and $\pi_Y:X\times Y\ra Y$, the marginals of
a measure $\gamma\in P(X\times Y)$ are $(\pi_X)_\#\gamma$ and $(\pi_Y)_\#\gamma$.

\begin{definition}
\label{optdef}
A transportation plan is a probability measure $\gamma\in P(X\times Y)$ whose marginals are $\mu$ and $\nu$. We denote this by $\gamma\in \Pi(\mu,\nu)$.
\end{definition}

I.e., $\gamma(A\times Y)=\mu(A)$ and $\gamma(X\times B)=\nu(B)$
for all Borel $A\subset X$ and $B\subset Y$

\begin{definition}
\label{}
An {\it optimal transportation plan } is a transportation plan minimizing
$$
\int^{}_{X\times Y}c\gamma. 
$$
\end{definition}

The ``best" transport plan is the one coming from transport map $T:X\ra Y$. 
Denote by $\id\otimes T:X\ra X \times Y$ the map $x\mapsto (x,T(x))$. 
Indeed,  $\gamma:=(\id\otimes T)_\# \mu\in \Pi(\mu,\nu)$ 
since $(\id\otimes T)_\# \mu(A\times Y)=\mu\big((\id\otimes T)^{-1}(A\times Y)\big)=
\mu(A)$ and $(\id\otimes T)_\# \mu(X\times B)=\mu\big((\id\otimes T)^{-1}(X\times B)\big)=\mu(T^{-1}(B))=\nu(B)$ since $T_\# \mu=\nu$.
Our goal will be to show that under some natural assumptions the  {\it optimal } plan must be of such a form, i.e., supported on the graph of a map.

For example, in the case of empirical measures, a transportation plan must be coming from a map represented by a permutation
(in other words, it must be supported on the graph of a permutation in the product space):
if one of the source points is not in $\supp\gamma$ then the first marginal condition is violated ($\gamma(\{x_i\}\times Y)=0$ while $\mu\{x_i\}=1/n$
but $\gamma(A\times Y)=\mu(A)$), while if one of the target points is not in $\supp\gamma$ then the second marginal condition is violated.
Thus $\sigma\in S_n$, or more precisely,
$$
\gamma:=\frac 1n \sum \delta_{(x_i,y_{\sigma(i)})}=\delta_{\gr(T)},
$$
(where $T:x_i\mapsto y_{\sigma(i)}$)
 is optimal if and only if \eqref{optdisceq} holds.

\subsection{From the discrete problem to the general one}
\label{}
A beautiful part of the story is that the discrete problem actually is the key to understanding the general transport problem. Equation  
\eqref{optdisceq} leads to the following definition (we replace
$n$ in \eqref{optdisceq}  with $m$ for the following discussion).

\begin{definition}
\label{}
A set $A\subset X\times Y $ is cyclically monotone if
\eqref{optdisceq}  holds for any $\{(x_i,y_i)\}_{i=1}^m\subset A$, $m\in \NN$,
and any $\sigma\in S_m$.
\end{definition}

Cyclical monotonicity essentially characterizes convexity. More precisely, the graph of the sub differential of a convex function $f$ is cyclically monotone: if $y_i\in\del f(x_i)$ 
$$
f(z)\ge f(x_i)+\langle z-x_i,y \rangle, \q \forall z, 
$$
so taking $z=x_{i+1}$ (with $x_{m+1}=x_1$) and adding up the equations yields 
\eqref{optdisceq}.
Conversely, to a cyclically monotone set $A$ we can associate a convex function $f_A$ such that $A\subset\gr(\del f_A)$ as follows. Fix $(x_0,y_0)\in A$ and set
$$
f_A(x):=\sup_{m\in\NN}\sup_{\{(x_i,y_i)\}_{i=1}^m\subset A}
\Big\{
\langle x-x_m,y_m \rangle +
\langle x_m-x_{m-1},y_{m-1} \rangle +
\ldots +
\langle x_1-x_{0},y_{0} \rangle
\Big\}.
$$
Note that $f_A$ is not $\pm \infty $ :
\begin{claim}
\label{fAclaim}
$f_A(x_0)=0$.
\end{claim}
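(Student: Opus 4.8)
The claim asserts that the function $f_A$, defined by a supremum over finite chains in the cyclically monotone set $A$ anchored at the fixed point $(x_0,y_0)\in A$, satisfies $f_A(x_0)=0$. The plan is to show the two inequalities $f_A(x_0)\ge 0$ and $f_A(x_0)\le 0$ separately. The first is immediate: the supremum defining $f_A(x_0)$ is taken over all $m\in\NN$ and all chains $\{(x_i,y_i)\}_{i=1}^m\subset A$, so in particular I may take $m=1$ and $(x_1,y_1)=(x_0,y_0)$. The corresponding term is $\langle x_0-x_0,y_0\rangle = 0$, so $f_A(x_0)\ge 0$.

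The substantive direction is $f_A(x_0)\le 0$. Here I would fix an arbitrary $m\in\NN$ and an arbitrary chain $(x_1,y_1),\ldots,(x_m,y_m)\in A$, and show that the associated telescoping-type sum
$$
\langle x_0-x_m,y_m\rangle + \langle x_m-x_{m-1},y_{m-1}\rangle + \ldots + \langle x_1-x_0,y_0\rangle
$$
is $\le 0$. The trick is to view this sum, together with the pair $(x_0,y_0)$, as coming from a cyclic configuration: set $x_{m+1}:=x_0$ and consider the $(m+1)$-tuple of points $(x_0,y_0),(x_1,y_1),\ldots,(x_m,y_m)\in A$, all of which lie in $A$. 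The cyclical monotonicity of $A$, applied with the cyclic permutation $\sigma$ that shifts indices by one, gives exactly an inequality of the form $\sum -\langle x_i,y_i\rangle \le \sum -\langle x_i, y_{\sigma(i)}\rangle$ (this is \eqref{optdisceq} with $n$ replaced by $m+1$); rearranging this inequality and recognizing the telescoping structure yields precisely that the displayed sum is $\le 0$. Taking the supremum over all $m$ and all chains then gives $f_A(x_0)\le 0$.

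The main obstacle — though it is really just bookkeeping — is getting the index shifting exactly right so that the sum from the definition of $f_A(x_0)$ matches the left/right sides of \eqref{optdisceq} after relabelling; one has to be careful that the anchoring term $\langle x_1-x_0,y_0\rangle$ and the closing term $\langle x_0-x_m,y_m\rangle$ are the ones produced by the cyclic shift on the augmented tuple. Once the correspondence is set up correctly, the inequality $f_A(x_0)\le 0$ is an immediate consequence of the definition of cyclical monotonicity, and combined with $f_A(x_0)\ge 0$ this proves the claim. (As a byproduct this also shows $f_A$ is not identically $+\infty$, and since each term in the supremum is a finite affine function of $x$, that $f_A(x)>-\infty$ for all $x$, so $f_A$ is a genuine convex function as needed in the surrounding discussion.)
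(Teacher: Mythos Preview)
Your proposal is correct and follows essentially the same approach as the paper: both show $f_A(x_0)\ge 0$ by taking $m=1$ with $(x_1,y_1)=(x_0,y_0)$, and $f_A(x_0)\le 0$ by applying the cyclical monotonicity inequality \eqref{optdisceq} to the augmented $(m+1)$-tuple with a cyclic shift. One tiny bookkeeping slip: for $m=1$ the sum in the definition of $f_A(x_0)$ has two terms, $\langle x_0-x_1,y_1\rangle+\langle x_1-x_0,y_0\rangle$, not one --- but with your choice both vanish, so the conclusion is unaffected.
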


\begin{remark} {\rm
\label{}
We will eventually prove much more, namely, that $f_A$ is nowhere $\pm \infty $.
} \end{remark}

\begin{proof}
First, $f_A(x_0)\le 0$  from
\eqref{optdisceq} with $m$ replaced by $m +1 $
and $\sigma(i)=i+1$. Second, $f_A(x_0)\ge 0$ by
putting $m=1$ and $(x_1,y_1)=(x_0,y_0)$ in the definition of $f_A$.
\end{proof}

Finally, if $(a,b)\in A$, want to show $b\in \del f_A(a)$
(the {\it sub-differential} of $f_A$), i.e., 
\begin{equation}
\begin{aligned}
\label{fAeq}
f_A(z)\ge f_A(a)+\langle z-a,b \rangle,\q \forall z. 
\end{aligned}
\end{equation}
Given any $\epsilon>0$ there is some $m\in\NN$ 
and some $\{(x_i,y_i)\}_{i=1}^m\subset A$ such that
\begin{equation}
\begin{aligned}
\label{epscycl}
f_A(a)-\eps=
\langle a-x_m,y_m \rangle +
\langle x_m-x_{m-1},y_{m-1} \rangle +
\ldots +
\langle x_1-x_{0},y_{0} \rangle.
\end{aligned}
\end{equation}
Thus,
$$
f_A(a)
+\langle z-a,b \rangle
=
\eps+
\langle z-a,b \rangle+
\langle a-x_m,y_m \rangle +
\langle x_m-x_{m-1},y_{m-1} \rangle +
\ldots +
\langle x_1-x_{0},y_{0} \rangle
\le
\eps+f_A(z),
$$
putting $m+1$ and $\{(x_i,y_i)\}_{i=1}^m\cup\{(a,b)\}$ in the definition of $f_A$.
Letting $\eps\ra0$ concludes the proof of  \eqref{fAeq}.

\begin{exer} {\rm
\label{}
Find the mistake in the previous argument.
} \end{exer}

\noindent
{\bf Solution.} 
The problem was that we were implicitly assuming that
$
f_A(a)< \infty.
$
Indeed, if $
f_A(a)= \infty$ one cannot, given any $\eps>0$, 
find $\{(x_i,y_i)\}_{i=1}^m\subset A$ such that 
 \eqref{epscycl} holds.
Instead, let $t\in\RR$ be any number satisfying $t<f_A(a)$
(possibly, $f_A(a)=\infty$). Now, there do exist, by definition,
$\{(x_i,y_i)\}_{i=1}^m\subset A$ such that
\begin{equation}
\begin{aligned}
\label{tcycl}
t<
\langle a-x_m,y_m \rangle +
\langle x_m-x_{m-1},y_{m-1} \rangle +
\ldots +
\langle x_1-x_{0},y_{0} \rangle.
\end{aligned}
\end{equation}
Thus, for all $z$,
\begin{equation}
\begin{aligned}
\label{tmeq}
t
+\langle z-a,b \rangle
<
\langle z-a,b \rangle+
\langle a-x_m,y_m \rangle +
\langle x_m-x_{m-1},y_{m-1} \rangle +
\ldots +
\langle x_1-x_{0},y_{0} \rangle
\le
f_A(z),
\end{aligned}
\end{equation}
by putting $m+1$ and $\{(x_i,y_i)\}_{i=1}^m\cup\{(a,b)\}$ in the definition of $f_A$. 
Supping over all  $t$ in  \eqref{tmeq},
$$
f_A(a)+\langle z-a,b \rangle=\sup_{t<f_A(a)}t+\langle z-a,b \rangle
\le f_A(z),
$$
as desired, i.e., $b\in\del f_A(a)$, unless $f_A(a)=\infty$
(in which case $\del f_A(a)=\emptyset$ by definition).
To exclude this, i.e.,
 to show $f_A$ is always finite, we put $z =0$ in
\eqref{tmeq}, and use Claim \ref{fAclaim}, 
$$
t
+\langle 0-a,b \rangle
<
f_A(0)=0,
$$
so we get the a priori estimate
$$
t< \langle a,b \rangle, \qq \h{for any $t<f_A(a)$}.
$$
Hence, $f_A(a)=\sup_{t<f_A(a)}t\le \langle a,b \rangle$ and in particular
$f_A(a)$ is finite (obviously $f_A>-\infty$ since it is a supremum
of finite quantities over a nonempty set). 

Recall the definition of the sub-differential $\partial f$ 
\eqref{fAeq} of a convex function $f:\RR^n\ra \RR$. The graph of $\partial f$
is defined as $\gr(\del f):=\{(x,y)\,:\, x\in\RR^n, y\in \partial f(x)\}.$
Thus, we have shown:

\begin{theorem}
\label{RockThm}
{\rm (Rockafellar's Theorem)}
A set $A\subset \RR^n\times \RR^n $ is cyclically monotone if
and only if $A\subset\gr(\del f)$ for a convex function $f:\RR^n\ra \RR$.
\end{theorem}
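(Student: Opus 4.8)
The plan is to assemble the two implications that have effectively been established in the preceding discussion. For the ``if'' direction --- that $A\subset\gr(\del f)$ with $f\colon\RR^n\to\RR$ convex forces $A$ to be cyclically monotone --- I would use the subgradient inequality $f(z)\ge f(x_i)+\langle z-x_i,y_i\rangle$ (valid for all $z$ when $y_i\in\del f(x_i)$), specialize it to $z=x_{i+1}$ with indices read cyclically ($x_{m+1}=x_1$), and sum the $m$ resulting inequalities: the $f(x_i)$ terms cancel and what remains rearranges to \eqref{optdisceq} for a cyclic permutation; a general $\sigma\in S_m$ is a product of disjoint cycles, so summing the inequality over each cycle's subchain gives the full statement. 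This is the short half and needs nothing beyond the computation displayed just before Claim \ref{fAclaim}.

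For the ``only if'' direction I would take the explicit Rockafellar potential $f_A$ defined above (after fixing a basepoint $(x_0,y_0)\in A$) and check three things. First, $f_A$ is convex: for each fixed finite chain $\{(x_i,y_i)\}_{i=1}^m\subset A$ the bracketed expression is affine in the variable $x$, so $f_A$, being a supremum of affine functions, is convex (and lower semicontinuous). Second, $f_A$ is proper: Claim \ref{fAclaim} gives $f_A(x_0)=0<\infty$, while $f_A>-\infty$ everywhere since it is a supremum over a nonempty index set of finite quantities. Third --- the crux --- for every $(a,b)\in A$ one has $b\in\del f_A(a)$, i.e.\ \eqref{fAeq} holds.

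The hard part is exactly this third point, because a priori one does not know $f_A(a)<\infty$ and so cannot select a near-optimal chain realizing $f_A(a)-\eps$. I would instead fix an arbitrary real $t<f_A(a)$, use the definition of $f_A$ to produce a chain $\{(x_i,y_i)\}_{i=1}^m\subset A$ satisfying \eqref{tcycl}, append $(a,b)$ to it, and read off from the definition of $f_A$ (now with $m+1$ pairs) that $t+\langle z-a,b\rangle<f_A(z)$ for all $z$; taking the supremum over $t<f_A(a)$ yields \eqref{fAeq}. To retroactively rule out $f_A(a)=\infty$, put $z=x_0$ in this last inequality and invoke $f_A(x_0)=0$ to get $t<\langle a-x_0,b\rangle$ for every $t<f_A(a)$, hence $f_A(a)\le\langle a-x_0,b\rangle<\infty$. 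Combining, $A\subset\gr(\del f_A)$ with $f_A$ a finite-valued convex function on $\RR^n$, which together with the ``if'' direction is the assertion of the theorem; no genuinely new ingredient beyond the corrected solution of the Exercise above is needed, the theorem being the formal record of that argument.
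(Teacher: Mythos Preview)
Your proposal is correct and follows essentially the same approach as the paper, which assembles the theorem from the discussion preceding it (including the corrected Solution to the Exercise). Two minor remarks: your use of $z=x_0$ rather than $z=0$ when bounding $f_A(a)$ is the right choice given that Claim \ref{fAclaim} asserts $f_A(x_0)=0$ (the paper's $z=0$ tacitly takes $x_0=0$), and your explicit mention of the cycle decomposition of a general $\sigma\in S_m$ fills in a step the paper leaves implicit when it says that summing the subgradient inequalities with $z=x_{i+1}$ ``yields \eqref{optdisceq}.''
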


\begin{remark} {\rm
\label{}
Note, again, that part of the  conclusion of the theorem is
that $f$ is finite (which should not come as a surprise, as also the set $A$ is ``finite-valued" by assumption; the converse direction is obvious since if $f$ is finite-valued then so is $\del f$).
In fact, 
$$
f_A(x)\le \inf_{(x,y)\in A}\langle x,y\rangle
\le h_{\{x\}\times\RR^n\cap A}(x),
$$ 
where $h_K$ is the support function of the set $K$ with
equality in the last inequality if $A$ is graphical (i.e.,
of the form $\gr(\del f)$ with $f$ as above).
} \end{remark}

The Fundamental Theorem of OT gives the final chain connecting the discrete problem to the continuous problem: the support of an optimal transport plan is cyclically monotone. Thus, by a previous observation it is contained in the graph of the sub differential of a convex function. Since a convex function is differentiable away from a (Lesbegue) measure zero set as measures 
$(\id\otimes \del f)_\# \mu= (\id\otimes \nabla f)_\# \mu$ 
where on the right hand side by $\nabla f$ we mean (by some abuse of notation)
the gradient {\it map} restricted to those $x\in\RR^n$ where $\partial f(x)$
is a singleton (that we then denote by $\nabla f(x)$); 
note also that assuming $\mu$ is absolutely continuous, Lebesgue measure zero sets  are also $\mu$-measure zero sets. Thus, we have come full circle, and solved the original transportation problem in terms of a  {\it map. }

\begin{theorem} {\rm 
(Fundamental Theorem of Optimal Transport)
}
\label{ftotthm}
Let $\gamma\in \Pi(\mu,\nu)$.
Then, 

\noindent
$\gamma$ optimal\ \ \ $\Leftrightarrow$\ \ \ 
$\supp\gamma$ is cyclically monotone
\ \ \ $\Leftrightarrow$\ \ \
exists $f$ convex such that $\supp\gamma\subset \gr(\del f)$. 
\end{theorem}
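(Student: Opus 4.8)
The plan is to pivot the chain of equivalences through ``$\gamma$ optimal'', and to note first that the equivalence ``$\supp\gamma$ cyclically monotone $\Leftrightarrow\supp\gamma\subset\gr(\del f)$ for some convex $f$'' is just Rockafellar's Theorem \ref{RockThm} applied to the set $A=\supp\gamma$ (one direction there being the computation, already carried out above, that $\gr(\del f)$ is cyclically monotone). So the real content is the pair of implications linking optimality with cyclical monotonicity, which I would treat separately, using throughout the reduction noted earlier (the cancellation leading to \eqref{optdisceq}) that the quadratic cost is equivalent to $c(x,y)=-\langle x,y\rangle$, and the standing compactness of the underlying spaces.

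For ``$\gamma$ optimal $\Rightarrow\supp\gamma$ cyclically monotone'' I would argue by contraposition. If $\supp\gamma$ is not cyclically monotone there are points $(x_1,y_1),\dots,(x_m,y_m)\in\supp\gamma$ and, after decomposing an offending permutation into cycles, a single $m$-cycle $\sigma(i)=i+1$ (mod $m$) with $\sum_{i}-\langle x_i,y_{i+1}\rangle<\sum_i-\langle x_i,y_i\rangle$. By continuity of $c$ I choose product neighbourhoods $U_i\ni x_i$, $V_i\ni y_i$ small enough that $\sum_i\sup_{U_i\times V_{i+1}}(-\langle\cdot,\cdot\rangle)<\sum_i\inf_{U_i\times V_i}(-\langle\cdot,\cdot\rangle)$. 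Each $c_i:=\gamma(U_i\times V_i)$ is positive since $(x_i,y_i)\in\supp\gamma$; put $\alpha:=\min_ic_i$ and $\hat\gamma_i:=c_i^{-1}\gamma|_{U_i\times V_i}$, with marginals $\mu_i$ on $U_i$ and $\lambda_i$ on $V_i$. I then form the competitor
\[
\tilde\gamma:=\gamma-\frac{\alpha}{m}\sum_{i=1}^m\hat\gamma_i+\frac{\alpha}{m}\sum_{i=1}^m\mu_i\otimes\lambda_{i+1}
\]
and check: it has total mass $1$; its marginals equal those of $\gamma$ (removed and rerouted pieces share $X$-marginals, and their $Y$-marginals match after the cyclic reindexing); it is a nonnegative measure, because $\tfrac{\alpha}{m}\sum_i\hat\gamma_i\le\tfrac1m\sum_i\gamma|_{U_i\times V_i}\le\gamma$ even when the $U_i\times V_i$ overlap; and its total cost is strictly below that of $\gamma$ by the displayed inequality. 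This contradicts optimality.

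For ``$\supp\gamma\subset\gr(\del f)$, $f$ convex $\Rightarrow\gamma$ optimal'' I would use Legendre duality: $y\in\del f(x)$ is equivalent to $f(x)+f^\star(y)=\langle x,y\rangle$, whereas $f(x)+f^\star(y)\ge\langle x,y\rangle$ holds always by \eqref{LegBasicProp}. A finiteness check, routine in the compact setting (one may take $f=f_A$ from Rockafellar's construction, which is bounded on a neighbourhood of $\supp\mu$, and then $f^\star$ is bounded on $\supp\nu$), makes $\int_Xf\,\mu$ and $\int_Yf^\star\,\nu$ finite. Then for every $\gamma'\in\Pi(\mu,\nu)$,
\[
\int_{X\times Y}-\langle x,y\rangle\,\gamma'\ \ge\ \int_{X\times Y}\big(-f(x)-f^\star(y)\big)\,\gamma'\ =\ -\int_Xf\,\mu-\int_Yf^\star\,\nu,
\]
and for $\gamma$ itself this is an equality, since the integrand inequality is an equality pointwise on $\supp\gamma\subset\gr(\del f)$. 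Hence $\gamma$ minimizes $\int-\langle x,y\rangle$ over $\Pi(\mu,\nu)$, i.e.\ $\gamma$ is optimal.

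I expect the competitor surgery to be the main obstacle: one must verify carefully that $\tilde\gamma$ is a genuine nonnegative measure with the prescribed marginals and strictly smaller cost, which requires some attention to the normalizations and to possibly overlapping neighbourhoods (an alternative is to disintegrate $\gamma$ along its first marginal and perform the rerouting fibrewise). By contrast the duality argument and the reduction to Rockafellar's Theorem are essentially formal.
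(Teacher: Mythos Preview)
Your proposal is correct and follows essentially the same route as the paper: the paper also proves the cycle ``cyclically monotone $\Rightarrow$ $\supp\gamma\subset\gr(\del f)$ (via Rockafellar) $\Rightarrow$ optimal (via the Legendre duality computation you wrote) $\Rightarrow$ cyclically monotone (via a competitor surgery)''. The only cosmetic difference is that the paper packages the competitor using pushforwards of the product measure $\Gamma:=\prod_i \hat\gamma_i$ on $(X\times Y)^m$, which unwinds to exactly your $\mu_i\otimes\lambda_{\sigma(i)}$ rerouting once one observes that for $i\neq\sigma(i)$ the $i$-th $X$-marginal and the $\sigma(i)$-th $Y$-marginal of $\Gamma$ are independent.
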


\begin{proof}
By Theorem \ref{RockThm}, it suffices to show the first equivalence, but
we will actually only use the hard part of Theorem \ref{RockThm}
and proceed to prove the implications cyclically. 
First, suppose that for some
$\gamma\in \Pi(\mu,\nu)$, 
$\supp\gamma$ is cyclically monotone. By Theorem \ref{RockThm},
there exists $f$ convex such that $\supp\gamma\subset \gr(\del f)$. Thus,
for every $\tilde\gamma\in \Pi(\mu,\nu)$, 
\begin{equation}
\begin{aligned}
\label{ftoteq}
\int_{X\times Y} -\langle x,y\rangle \gamma
&=
\int_{\supp\gamma} -\langle x,y\rangle \gamma
\cr
&=
\int_{\supp\gamma} [-f(x)-f^\star(y)] \gamma
\cr
&=
-\int_X f\mu - \int_Y f^\star \nu.
\cr
&=
\int_{X\times Y}
[-f(x)-f^\star(y)] \tilde\gamma
\cr
&\le
\int_{X\times Y} -\langle x,y\rangle \tilde\gamma,
\end{aligned}
\end{equation}
so by definition $\gamma $  is optimal.

Next, assume that $\gamma $  is optimal. We want to show 
that $\supp\gamma$ is cyclically monotone.
Fix $\{(x_i,y_i)\}_{i=1}^m\subset \supp\gamma$.
To that end, we carefully construct $\tilde\gamma\in\Pi(\mu,\nu)$ of the form $\tilde\gamma:=\gamma +\eta$ with
\begin{equation}
\begin{aligned}
\label{approxcycleq}
0\le \int_{X\times Y}
 -\langle x,y\rangle\tilde\gamma
-
\int_{X\times Y}
 -\langle x,y\rangle\gamma
\approx
\sum_{i = 1}^n-\langle x_i,y_{\sigma(i)}\rangle
-
\sum_{i = 1}^n-\langle x_i,y_i\rangle.
\end{aligned}
\end{equation}

Of course, the idea is to construct the positive part of $\eta$ to be concentrated near 
$\{(x_i,y_{\sigma(i)})\}_{i=1}^m$
and the negative part of $\eta$ to be concentrated near
$\{(x_i,y_i)\}_{i=1}^m$. We have to do this in such a way that $\gamma+\eta$ is still admissible (i.e., a transport plan).
Equivalently, $(\pi_X)_{\#}\eta=0,\,(\pi_Y)_{\#}\eta=0$.

For the construction, we fix $\epsilon >0 $. Set
$$
\Gamma:=\prod_{i=1}^m 
\frac
{
\gamma|_{B_\eps(x_i)\times B_\eps(y_i)}
}
{
|\gamma\big(B_\eps(x_i)\times B_\eps(y_i)\big)|
}
$$
This is an auxiliary probability measure on $P\big((X\times Y)^m\big)$.
It is useful, because it's marginals allow us to cyclically modify the way $\gamma$ transports: in order to transport $B_\eps(x_i)$  to 
$B_\eps(y_{\sigma(i)})$ instead of to $B_\eps(y_i)$ we would add 
$$
(\pi_{B_\eps(x_i)}, \pi_{B_\eps(y_{\sigma(i)})})_{\#}\Gamma
-
(\pi_{B_\eps(x_i)}, \pi_{B_\eps(y_i)})_{\#}\Gamma. 
$$
to $\gamma$. So, overall, we set
\begin{equation}
\begin{aligned}
\label{etaeq}
\eta:=
\frac{\min_i |\gamma\big(B_\eps(x_i)\times B_\eps(y_i)\big)|}{m}
\sum_{i=1}^m
\Big[(\pi_{B_\eps(x_i)}, \pi_{B_\eps(y_{\sigma(i)})})_{\#}\Gamma
-
(\pi_{B_\eps(x_i)}, \pi_{B_\eps(y_i)})_{\#}\Gamma\Big]. 
\end{aligned}
\end{equation}
The constant ${\min_i |\gamma\big(B_\eps(x_i)\times B_\eps(y_i)\big)|}/{m}$ in front of some ensures that $\gamma+\eta$ is still a positive measure
(recalling that $|\gamma\big(B_\eps(x_i)\times B_\eps(y_i)\big)|$ appears in the denominator of $\Gamma$, so the largest negative term inside the
brackets in \eqref{etaeq} is ${\min_i |\gamma\big(B_\eps(x_i)\times B_\eps(y_i)\big)|}$ and there are at most $m$ of these negative terms).
Next, to show $(\pi_Y)_{\#}\eta=0$ amounts to
$
\eta(X\times B)=0
$
for each $B$, and indeed, up to a positive factor 
$
(\pi_Y)_{\#}\eta(B)=\eta(X\times B)
$ equals
\begin{equation*}
\begin{aligned}
\label{}
&\sum_{i=1}^m
\Big[(\pi_{B_\eps(x_i)}, \pi_{B_\eps(y_{\sigma(i)})})_{\#}\Gamma(X\times B)
-
(\pi_{B_\eps(x_i)}, \pi_{B_\eps(y_i)})_{\#}\Gamma(X\times B)\Big]
\cr
&= 
\sum_{i=1}^m
\Gamma(X\times Y\times\cdots\times
X\mskip-25mu\mathoverr{\h{\rmfoot $\mathfootnote\sigma(i)$-th slot}}
{\times }\mskip-25mu B\times\cdots\times X\times Y)
\cr
&\q -\sum_{i=1}^m
\Gamma(X\times Y\times\cdots\times
X\mskip-15mu\mathoverr{\h{\rmfoot $\mathfootnote i$-th slot}}{\times }\mskip-15mu B\times\cdots\times X\times Y)
=
0
\end{aligned}
\end{equation*}
Finally, $\,(\pi_X)_{\#}\eta=0$ is easier as 
\begin{equation*}
\begin{aligned}
\label{}
&\sum_{i=1}^m
\Big[(\pi_{B_\eps(x_i)}, \pi_{B_\eps(y_{\sigma(i)})})_{\#}\Gamma(A\times Y)
-
(\pi_{B_\eps(x_i)}, \pi_{B_\eps(y_i)})_{\#}\Gamma(A\times Y)\Big]
\cr
&= 
\sum_{i=1}^m
\Gamma(X\times Y\times\cdots\times
A\mskip-25mu\mathoverr{\h{\rmfoot $\mathfootnote\sigma(i)$-th slot}}
{\times }\mskip-25mu Y\times\cdots\times X\times Y)
\cr
&\q -\sum_{i=1}^m
\Gamma(X\times Y\times\cdots\times
A\mskip-15mu\mathoverr{\h{\rmfoot $\mathfootnote i$-th slot}}{\times }\mskip-15mu Y\times\cdots\times X\times Y)
=
\sum_{i=1}^m 0=0
\end{aligned}
\end{equation*}
(i.e., is term-by-term zero).

Thus, we have shown  \eqref{approxcycleq}
 up to $o(\eps)$. Letting $\eps\ra 0$ proves  \eqref{optdisceq},
as claimed.
\end{proof}

\subsection{Dual formulation}
\label{}

A rather immediate consequence of Theorems \ref{RockThm} and \ref{ftotthm} is the following dual formulation of the optimal transportation problem in terms of an optimization problem on {\it functions } instead of measures.

\begin{theorem} 
\label{dualthm}
Let $c(x,y)=-\langle x,y \rangle$.
\begin{equation}
\label{DualFormEq}
\inf_{\gamma\in\Pi(\mu,\nu)}
\int^{}_{X\times Y}c\gamma
=
\sup_{f(x)+g(y)\le c(x,y)}
\Big[
\int_Xf\mu+\int_Yg\nu
\Big].
\end{equation}
\end{theorem}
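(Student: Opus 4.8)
The plan is to prove the two inequalities in \eqref{DualFormEq} separately. The inequality ``$\ge$'' of the displayed identity (weak duality, i.e.\ $\sup\le\inf$) is immediate from the marginal constraints, while the reverse inequality (strong duality) is essentially a repackaging of the chain of equalities \eqref{ftoteq} in the proof of Theorem \ref{ftotthm}: an optimal plan, via Rockafellar's Theorem \ref{RockThm}, manufactures an explicit admissible dual pair attaining the primal value.

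For weak duality, fix any $\gamma\in\Pi(\mu,\nu)$ and any bounded measurable $f$ on $X$, $g$ on $Y$ with $f(x)+g(y)\le c(x,y)$ pointwise. Integrating against $\gamma$ and using $(\pi_X)_\#\gamma=\mu$, $(\pi_Y)_\#\gamma=\nu$,
$$
\int_X f\mu+\int_Y g\nu=\int_{X\times Y}\bigl(f(x)+g(y)\bigr)\gamma\le\int_{X\times Y}c\,\gamma.
$$
Taking the supremum over admissible $(f,g)$ and the infimum over $\gamma$ gives $\sup\le\inf$ in \eqref{DualFormEq}.

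For strong duality one must exhibit a single admissible pair whose value equals $\inf_{\gamma}\int c\,\gamma$. Work in the regime relevant to these notes, where $\mu,\nu$ are compactly supported (empirical and uniform measures); then $\Pi(\mu,\nu)$ is weakly compact by Prokhorov's theorem, $c=-\langle\cdot,\cdot\rangle$ is bounded and continuous on $\supp\mu\times\supp\nu$, and so the infimum is attained, say at $\gamma^\star$. By Theorem \ref{ftotthm}, $\supp\gamma^\star$ is cyclically monotone, so Theorem \ref{RockThm} produces a finite convex $f:\RR^n\to\RR$ with $\supp\gamma^\star\subset\gr(\del f)$. Take as dual candidates $\phi:=-f$ and $\psi:=-f^\star$. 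The Fenchel--Young inequality \eqref{LegBasicProp} gives $f(x)+f^\star(y)\ge\langle x,y\rangle$, i.e.\ $\phi(x)+\psi(y)\le-\langle x,y\rangle=c(x,y)$, so $(\phi,\psi)$ is admissible; moreover $f$ is bounded on the compact set $\supp\mu$ (being convex, hence continuous), and $f^\star$ agrees $\gamma^\star$-a.e.\ with $(x,y)\mapsto\langle x,y\rangle-f(x)$, which is bounded, so both are integrable against the relevant marginals. On $\gr(\del f)$ equality holds in \eqref{LegBasicProp}, so since $\gamma^\star$ is supported there,
$$
\int_X\phi\,\mu+\int_Y\psi\,\nu=\int_{X\times Y}\bigl(-f(x)-f^\star(y)\bigr)\gamma^\star=\int_{\supp\gamma^\star}(-\langle x,y\rangle)\,\gamma^\star=\int_{X\times Y}c\,\gamma^\star=\inf_{\gamma\in\Pi(\mu,\nu)}\int_{X\times Y}c\,\gamma.
$$
Hence the right-hand side of \eqref{DualFormEq} is $\ge$ the left-hand side, which together with weak duality yields \eqref{DualFormEq}.

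The only step requiring genuine care beyond bookkeeping is the existence of the optimal plan $\gamma^\star$: in the general non-compact setting one invokes tightness of $\Pi(\mu,\nu)$ together with lower semicontinuity of $\gamma\mapsto\int c\,\gamma$ (truncating $c$ from below), or runs the argument with $\eps$-optimal plans and lets $\eps\to0$; in the compactly supported case at hand this is routine. I would also remark that the supremum on the right of \eqref{DualFormEq} is the same whether $(f,g)$ range over bounded Borel or continuous functions, since the optimizer produced is the Fenchel pair $(-f,-f^\star)$ with $f$ convex and continuous.
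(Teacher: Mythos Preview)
Your proof is correct and follows essentially the same route as the paper: weak duality via the marginal constraints, then strong duality by invoking existence of an optimal plan (the paper's Lemma \ref{optexistlemma}), applying Theorems \ref{ftotthm} and \ref{RockThm} to obtain a convex $\phi$ with $\supp\gamma\subset\gr(\del\phi)$, and exhibiting the Fenchel pair $(-\phi,-\phi^\star)$ as an admissible dual pair achieving the primal value via the chain \eqref{ftoteq}. Your added remarks on integrability and compact support are a bit more careful than the paper's treatment, but the argument is otherwise identical.
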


\begin{proof}
According to Lemma \ref{optexistlemma} there exists $\gamma$ realizing the infimum on the left-hand side.
Let $f,g$ be such that 
$f(x)+g(y)\le -\langle x,y \rangle$. Then
$$
\int^{}_{X\times Y}c\gamma
\ge
\int^{}_{X\times Y}
(f(x)+g(y))\gamma
=\int_Xf\mu+\int_Yg\nu.
$$
 It thus remains to show 
\begin{equation}
\begin{aligned}
\label{phidualeq}
\int^{}_{X\times Y}c\gamma
\le
\sup_{f(x)+g(y)\le c(x,y)}
\Big[
\int_Xf\mu+\int_Yg\nu
\Big].
\end{aligned}
\end{equation}
Theorems \ref{RockThm} and \ref{ftotthm} imply that $\supp\,\gamma\subset\gr(\nabla\phi)$
for some convex function $\phi$. Since $\phi(x)+\phi^\star(y)\ge -c(x,y)$
with the quality if and only if $(x,y)\in\gr(\del \phi)$,
\begin{equation*}
\begin{aligned}
\label{}
\int^{}_{X\times Y}c\gamma
&=
\int^{}_{\supp\gamma}c\gamma
\cr
&=
\int^{}_{\gr(\del \phi)}c\gamma
\cr&=
\int^{}_{X\times Y}[-\phi(x)-\phi^\star(y)]\gamma
\cr&=
\int^{}_{X\times Y}[
-\phi(x)-\phi^\star (y)]\gamma
\cr&=
-
\int_X\phi\mu-\int_Y\phi^\star\nu.
\end{aligned}
\end{equation*}
Thus,  \eqref{phidualeq} holds as already the pair $(f,g)=
(-\phi,-\phi^\star)$ equals the left-hand side.
\end{proof}

In fact, we saw that the dual formulation can be given in 
terms of a single (and convex) function. Also, we
play a bit with the signs, to get:

\begin{cor}{\rm (Dual formulation of optimal transportation)}
\label{dualCor}
$$
\sup_{\gamma\in\Pi(\mu,\nu)}
\int^{}_{X\times Y}\langle x,y \rangle\gamma
=
\inf_{f\in C(X)}
\Big[
\int_Xf\mu+\int_Y f^\star\nu
\Big]
=
\inf_{f\in \Cvx(X)}
\Big[
\int_Xf\mu+\int_Y f^\star\nu
\Big].
$$
\end{cor}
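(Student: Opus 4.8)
The plan is to deduce Corollary \ref{dualCor} from Theorem \ref{dualthm} by a sign flip, exploiting the fact that the proof of Theorem \ref{dualthm} does not merely establish the equality of the two optimal values but actually exhibits a maximizing pair of a very rigid form. Apply Theorem \ref{dualthm} with $c(x,y)=-\langle x,y\rangle$. Its proof (via Theorems \ref{RockThm} and \ref{ftotthm} together with the existence of an optimal plan) produces a convex function $\phi$ with $\supp\gamma\subset\gr(\del\phi)$ for an optimal plan $\gamma$, and shows that the admissible pair $(f,g)=(-\phi,-\phi^\star)$ realizes the supremum, so that
\[
\inf_{\gamma\in\Pi(\mu,\nu)}\int_{X\times Y}(-\langle x,y\rangle)\gamma
=\int_X(-\phi)\mu+\int_Y(-\phi^\star)\nu .
\]
Multiplying by $-1$ and using $-\inf_\gamma\int(-\langle x,y\rangle)\gamma=\sup_\gamma\int\langle x,y\rangle\gamma$ gives the first identity of the corollary in the sharp form
\[
\sup_{\gamma\in\Pi(\mu,\nu)}\int_{X\times Y}\langle x,y\rangle\gamma
=\int_X\phi\mu+\int_Y\phi^\star\nu ,
\]
that is, the common value is already attained at the single convex function $\phi$.

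The second and third expressions are then handled by showing that $\phi$ minimizes $f\mapsto\int_Xf\mu+\int_Yf^\star\nu$ over all of $C(X)$. For the lower bound, let $f\in C(X)$ be arbitrary. The Fenchel--Young inequality \eqref{LegBasicProp} gives $f(x)+f^\star(y)\ge\langle x,y\rangle$ for every $(x,y)$, so $(-f,-f^\star)$ satisfies the constraint $(-f)(x)+(-f^\star)(y)\le c(x,y)$ and is therefore admissible in the supremum of Theorem \ref{dualthm}; hence $\int_X(-f)\mu+\int_Y(-f^\star)\nu\le\int_X(-\phi)\mu+\int_Y(-\phi^\star)\nu$, which after negation reads $\int_Xf\mu+\int_Yf^\star\nu\ge\int_X\phi\mu+\int_Y\phi^\star\nu$. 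Since $\phi$ is a finite convex function it lies in $\Cvx(X)\subseteq C(X)$, so the infimum over either class is attained at $f=\phi$ and equals $\int_X\phi\mu+\int_Y\phi^\star\nu$. Combining with the previous paragraph yields the full chain of equalities in Corollary \ref{dualCor}.

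I expect the step requiring the most care to be the verification that every competitor is a legitimate object: that $f^\star$ is continuous and $\nu$-integrable whenever $f\in C(X)$, and likewise that $\phi^\star\in C(Y)$ and $\phi\in\Cvx(X)\subseteq C(X)$. This rests on the standing compactness of $X$ and $Y$: for $f\in C(X)$ with $X$ compact, $f^\star(y)=\sup_{x\in X}[\langle x,y\rangle-f(x)]$ is a finite convex function of $y$, hence continuous on $\RR^n$ and, being bounded on the compact set $Y$, $\nu$-integrable; the same applies to $\phi^\star$, and $\phi$, coming from Rockafellar's Theorem \ref{RockThm}, is a finite convex function on $\RR^n$, hence continuous. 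I would also note, as an aside explaining why one may pass from the symmetric dual of Theorem \ref{dualthm} to a formulation over a single function, that for fixed $f$ the pointwise smallest $g$ making $(f,g)$ admissible is exactly $g=f^\star$, so nothing is lost by optimizing over $f$ alone; in the route above this is already built into the explicit optimizer $(-\phi,-\phi^\star)$ and needs no separate proof. Alternatively, one could reach the $\Cvx(X)$ formulation from the $C(X)$ one by replacing any $f\in C(X)$ with its convex envelope $f^{\star\star}\le f$, which satisfies $(f^{\star\star})^\star=f^\star$ and hence does not increase the objective.
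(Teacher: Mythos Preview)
Your proposal is correct and follows exactly the approach the paper indicates: the paper does not give a separate proof of Corollary~\ref{dualCor} but simply remarks, just before stating it, that ``we saw that the dual formulation can be given in terms of a single (and convex) function'' and that one should ``play a bit with the signs,'' referring to the explicit optimizer $(-\phi,-\phi^\star)$ found in the proof of Theorem~\ref{dualthm}. You have simply written out in full what the paper leaves implicit, including the Fenchel--Young lower bound for arbitrary $f\in C(X)$ and the observation that the optimizer $\phi$ is already convex; your care about compactness of $X,Y$ (needed so that $f^\star$ is finite and $\phi\in C(X)$) is appropriate and matches the setting in which the paper ultimately applies the result.
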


\begin{lemma}
\label{optexistlemma}
The infimum on the left hand side of \eqref{DualFormEq} is attained.
\end{lemma}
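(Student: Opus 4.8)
The plan is to prove this by the direct method of the calculus of variations: realize $\Pi(\mu,\nu)$ as a nonempty, weakly compact subset of $P(X\times Y)$, and $\gamma\mapsto\int_{X\times Y}c\,\gamma$ as a weakly lower semicontinuous functional on it; then a minimizing sequence has a weakly convergent subsequence whose limit attains the infimum.

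First I would note $\Pi(\mu,\nu)\neq\emptyset$, since $\mu\otimes\nu$ has the required marginals; in the compactly supported setting used throughout these notes $\int c\,d(\mu\otimes\nu)<\infty$, so the infimum is finite. Next, $\Pi(\mu,\nu)$ is tight: given $\eps>0$ choose compact $K_X\subset X$, $K_Y\subset Y$ with $\mu(X\setminus K_X)<\eps/2$ and $\nu(Y\setminus K_Y)<\eps/2$ (one may simply take $K_X=\supp\mu$, $K_Y=\supp\nu$), and observe that $K_X\times K_Y$ is compact with
\[
\gamma\big((X\times Y)\setminus(K_X\times K_Y)\big)\le\gamma\big((X\setminus K_X)\times Y\big)+\gamma\big(X\times(Y\setminus K_Y)\big)=\mu(X\setminus K_X)+\nu(Y\setminus K_Y)<\eps
\]
for every $\gamma\in\Pi(\mu,\nu)$. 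By Prokhorov's theorem (already invoked in \S\ref{ExpTightSubSec}) $\Pi(\mu,\nu)$ is weakly relatively compact, and it is weakly closed: if $\gamma_k\to\gamma$ weakly, then testing against $(x,y)\mapsto\phi(x)$ for bounded continuous $\phi$ gives $(\pi_X)_\#\gamma=\mu$, and similarly $(\pi_Y)_\#\gamma=\nu$, so $\gamma\in\Pi(\mu,\nu)$. Hence $\Pi(\mu,\nu)$ is weakly compact.

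Then I would check weak lower semicontinuity of $\gamma\mapsto\int_{X\times Y}c\,\gamma$ on $\Pi(\mu,\nu)$. Since every competitor is supported in the compact set $\supp\mu\times\supp\nu$, one may replace $X,Y$ by $\supp\mu,\supp\nu$, on which $c(x,y)=-\langle x,y\rangle$ is bounded and continuous; then the functional is in fact weakly \emph{continuous}, and the lemma reduces to the extreme value theorem for a continuous functional on the weakly compact metrizable space $P(\supp\mu\times\supp\nu)$. (In the absence of the compact support hypothesis one writes $c=\sup_j c_j$ for an increasing sequence of bounded continuous $c_j$, uses that each $\gamma\mapsto\int c_j\,\gamma$ is weakly continuous, and concludes weak lower semicontinuity of the supremum by monotone convergence; this needs $c$ lower semicontinuous and bounded below, which holds here.)

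Finally, take a minimizing sequence $\gamma_k\in\Pi(\mu,\nu)$, pass to a weakly convergent subsequence $\gamma_{k_\ell}\to\gamma_\infty\in\Pi(\mu,\nu)$, and use lower semicontinuity to get $\int c\,\gamma_\infty\le\liminf_\ell\int c\,\gamma_{k_\ell}=\inf_{\Pi(\mu,\nu)}\int c\,\gamma$, so $\gamma_\infty$ is optimal. The only step that would require care in a more general setting is the weak lower semicontinuity of the cost when $c$ is merely lower semicontinuous; with $c$ continuous and $\mu,\nu$ compactly supported, that point is immediate, so the real content is just the compactness of $\Pi(\mu,\nu)$ — i.e., the tightness estimate together with Prokhorov.
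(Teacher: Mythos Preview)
Your proof is correct and follows essentially the same route as the paper's: tightness of $\Pi(\mu,\nu)$ via the marginal estimate, Prokhorov for pre-compactness, weak closedness of $\Pi(\mu,\nu)$, and lower semicontinuity (in fact continuity) of the cost functional. The paper invokes Ulam's Theorem to obtain the compact sets $K_X,K_Y$ in the general Polish setting rather than assuming compact support, but otherwise the argument is the same.
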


\begin{proof}
The proof follows Ambrosio--Gigli who work in a more general setting 
\cite[\S1.1]{AmbGig}.
Since $c: X\times Y\ra \RR$ is continuous (in fact, lower semicontinuous is enough, with slightly more work
\cite[Theorem 1.2]{AmbGig}) then $\gamma\mapsto \int^{}_{} c\gamma$ is continuous with respect to the weak topology. Since 
\begin{equation}
\begin{aligned}
\label{ulameq}
\gamma (X\times Y\setminus K_1\times K_2)\le \mu (X\setminus K_1) +\mu (Y\setminus K_2), 
\end{aligned}
\end{equation}
for {\it any } $\gamma\in\Pi(\mu,\nu)$, it follows that  $\Pi(\mu,\nu)$ satisfies the assumptions of Prokhorov's Theorem (see \cite[Theorem 1.3]{AmbGig}): indeed, the right-hand side of \eqref{ulameq} can be made arbitrarily small by Ulam's Theorem (any Borel probability measure
on a Polish space is concentrated on a compact set up to an arbitrarily small error)
applied to the Polish measure spaces $(X,\mu)$  and $(Y,\nu)$. Thus, $\Pi(\mu,\nu)$
is pre-compact. The closure of a pre-compact set is by definition compact, it suffices to show that $\Pi(\mu,\nu)$ is actually closed (with respect to the weak topology), but this is immediate since $\int f\mu=\int f(x)\gamma_n\ra
\int f(x)\gamma=\int f\mu$ (note that $(x,y)\mapsto f(x)$ for $f\in C(X)$ is in $C(X\times Y) $ so then $\alpha\mapsto \int f(x)\alpha$ is continuous
with respect to the weak topology)  and similarly for the other marginal.
Since a lower semicontinuous functional attains its infimum on a compact set, we are done.
\end{proof}

\subsection{The Legendre transform of Wasserstein distance}
\label{}
Denote by
\begin{equation}
\begin{aligned}
\label{CXeq}
C(X):= C^0(X)\cap L^\infty(X)
\end{aligned}
\end{equation}
the continuous and bounded functions on $X$.

Let $J_\nu:C(X)\ra \RR$ be
\begin{equation}
\begin{aligned}
\label{jeq}
J_\nu(f):=\int^{}_{Y}f^\star \nu. 
\end{aligned}
\end{equation}
Denote by $W_2^2:P(X)\times P(Y)\ra\RR$
\begin{equation}
\begin{aligned}
\label{W22eq}
W_2^2(\mu,\nu):=
\inf_{\gamma\in\Pi(\mu,\nu)}
\int^{}_{X\times Y}c\gamma
\end{aligned}
\end{equation}
the {\it Wasserstein distance between $\mu$ and $\nu$}. 
It will be convenient to extend this functional to $\calM(X)\times\calM(Y)$ ``by infinity," namely
$$
W_2^2(\mu,\nu):=
 \infty 
$$
if either $\mu\not\in P(X)$ or $\nu\not\in P(Y)$.

Our work so far can be summarized in terms of Legendre duality these two functionals:
indeed, for any $\mu\in P(X)$,
$$
\begin{aligned}
-W_2^2(\mu,\nu)
&=
\inf_{f\in C(X)}
\Big[
\int_Xf\mu+\int_Y f^\star\nu
\Big]
\cr
&=
-\sup_{f\in C(X)}
\Big[
\langle f,-\mu \rangle -J_\nu(f)
\Big]
=-J_\nu^\star(-\mu),
\end{aligned}
$$
while if $\mu\in \calM(X)$ but $\mu\not\in P(X)$ then
since $(f+C)^\star=f^\star-C$ we see
$\langle f+C,-\mu \rangle -J(f+C)=
\langle f,-\mu \rangle -J_\nu(f)+C(1-\mu(X))$
which can be made arbitrarily large if $\mu(X)\not=1$, which is to say that
$J_\nu^\star(-\mu)= \infty $,
i.e., we have
\begin{equation}
\lb{w22jeq}
\begin{aligned}
W_2^2(\mu,\nu)
=J_\nu^\star(-\mu).
\end{aligned}
\end{equation}  
The following theorem summarizes this and more.
This is the first time the {\it Monge--Amp\`ere operator } 
\begin{equation}
\begin{aligned}
\label{maopeq}
\MA_\nu f:=(\nabla f^\star)_\#\nu
\end{aligned}
\end{equation}
makes its appearance. 

\begin{remark} {\rm
\label{}
When $\nu=dx$ this reduces to the well-known Monge--Amp\`ere operator
$$
\MA f=d\nabla f:=d\frac{\del f}{\del x_1}\w\cdots\w d\frac{\del f}{\del x_n},
$$
since
$$
\int_X F(\nabla f^\star)_\# dx=
\int_Y F\circ(\nabla f^\star) dx
=
\int_Y F\circ(\nabla f)^{-1} dx
=
\int_X F d\nabla f(x)
=
\int_X F \det\nabla f(x).
$$
When $f\in C^2$ then $\MA f=\det\nabla^2 f$.
} \end{remark}

\begin{theorem}
\label{JW2Thm}
$J_\nu$ and $W_2^2(-\,\cdot\,,\nu)$ are convex, lower semicontinuous, and Legendre dual to each other.
$J_\nu$ is Gateaux differentiable and
\begin{equation}
\begin{aligned}
\label{djeq}
dJ_\nu|_f=-\MA_\nu f.
\end{aligned}
\end{equation}  
\end{theorem}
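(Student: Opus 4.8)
The plan is to bootstrap almost everything from the identity \eqref{w22jeq}, which already reads $W_2^2(-\,\cdot\,,\nu)=J_\nu^\star$ as functionals on $\calM(X)$ (paired with $C(X)$ by integration). Being a Legendre transform, $W_2^2(-\,\cdot\,,\nu)$ is a supremum of weak-$*$ continuous affine functionals of $\mu$, hence automatically convex and lower semicontinuous, and by construction it is the conjugate of $J_\nu$. So it remains to (i) check that $J_\nu$ is convex and lower semicontinuous, which via the Fenchel--Moreau theorem upgrades \eqref{w22jeq} to $J_\nu=J_\nu^{\star\star}=\big(W_2^2(-\,\cdot\,,\nu)\big)^\star$ — i.e.\ the two functionals are genuinely Legendre dual to one another — and (ii) establish Gateaux differentiability with the formula \eqref{djeq}.

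Step (i) is soft. For convexity, observe that for each fixed $y$ the map $f\mapsto f^\star(y)=\sup_x[\langle x,y\rangle-f(x)]$ is a supremum of affine-in-$f$ functionals, hence convex in $f$, and integrating against the probability measure $\nu$ preserves convexity. For lower semicontinuity, the pointwise bound $|f^\star(y)-g^\star(y)|\le\|f-g\|_{L^\infty}$ gives $|J_\nu(f)-J_\nu(g)|\le\|f-g\|_{L^\infty}$, so $J_\nu$ is $1$-Lipschitz, hence norm-continuous, on $C(X)$; a norm-continuous convex function is lower semicontinuous for the weak topology as well, since a norm-closed convex epigraph is weakly closed. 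As $J_\nu$ is also proper ($J_\nu(0)=\int_Y h_X\,\nu<\infty$, with $h_X$ the support function of $X$), Fenchel--Moreau applies.

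Step (ii) is where the real work is. Fix $f$ and a test function $g\in C(X)$ and study $t\mapsto J_\nu(f+tg)=\int_Y(f+tg)^\star\,\nu$ via \eqref{jeq}. The difference quotient $\big[(f+tg)^\star(y)-f^\star(y)\big]/t$ is bounded in absolute value by $\|g\|_{L^\infty}$ uniformly in $(y,t)$, so dominated convergence lets one differentiate under the integral sign. For the pointwise derivative I would invoke an envelope (Danskin-type) argument: writing $\partial f^\star(y)$ for the set of maximizers of $x\mapsto\langle x,y\rangle-f(x)$, the left and right $t$-derivatives of $(f+tg)^\star(y)$ at $t=0$ are $-\sup_{x\in\partial f^\star(y)}g(x)$ and $-\inf_{x\in\partial f^\star(y)}g(x)$, which coincide — and equal $-g(\nabla f^\star(y))$ — exactly when $\partial f^\star(y)$ is a singleton. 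The convex function $f^\star$ is differentiable off a Lebesgue-null set, and for the measures $\nu$ at play (absolutely continuous, continuous density) that exceptional set is $\nu$-negligible, so the pointwise $t$-derivative is $-g(\nabla f^\star(y))$ for $\nu$-a.e.\ $y$. Integrating and using \eqref{maopeq},
\[
\frac{d}{dt}\Big|_{t=0}J_\nu(f+tg)=-\int_Y g\big(\nabla f^\star(y)\big)\,\nu(y)=-\big\langle g,(\nabla f^\star)_\#\nu\big\rangle=-\langle g,\MA_\nu f\rangle,
\]
which is \eqref{djeq}; linearity and continuity of the right-hand side in $g$ give Gateaux differentiability at $f$.

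The anticipated main obstacle is the differentiability step (ii): passing the $t$-derivative inside the integral is routine given the uniform bound, but the envelope computation together with the $\nu$-almost-everywhere uniqueness of the maximizer $\nabla f^\star(y)$ — the only place a hypothesis on $\nu$ (charging no Lebesgue-null set) really enters, and precisely what forces the one-sided derivatives to agree — is the substantive point to pin down carefully. Everything in step (i) follows formally once \eqref{w22jeq} is in hand.
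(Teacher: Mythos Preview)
Your argument is correct, but your route to Gateaux differentiability is genuinely different from the paper's. For step~(i) you and the paper do essentially the same thing (the paper even shows $J_\nu$ is norm-continuous, as you do). The divergence is in step~(ii). The paper does \emph{not} differentiate under the integral or invoke an envelope theorem; instead it argues via convex duality: from \eqref{w22jineq} it observes that $-\mu$ lies in the subdifferential of $J_\nu$ at $f$ precisely when equality holds, and then uses the optimal-transport machinery (Theorem~\ref{ftotthm} and Corollary~\ref{dualCor}) to show that equality forces $\mu=(\nabla f^\star)_\#\nu=\MA_\nu f$ and that this $\mu$ is unique. Uniqueness of the subgradient then yields Gateaux differentiability with $dJ_\nu|_f=-\MA_\nu f$.

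Your Danskin-type computation is more elementary in that it bypasses the Fundamental Theorem of Optimal Transport entirely: you only need compactness of $X$ (so the sup defining $f^\star$ is attained), the uniform Lipschitz bound on the difference quotient, and the $\nu$-a.e.\ differentiability of the convex function $f^\star$. The paper's approach, on the other hand, makes the link to optimal transport explicit---the subdifferential of $J_\nu$ is identified \emph{through} the dual formulation of $W_2^2$---which fits the narrative of \S\ref{OTSec} and makes the appearance of $\MA_\nu$ feel inevitable rather than computational. Both approaches ultimately rest on the same hypothesis you flagged: that $\nu$ charge no set where $\nabla f^\star$ fails to exist. One small notational caveat: what you call $\partial f^\star(y)$ is the argmax set of $x\mapsto\langle x,y\rangle-f(x)$, which for non-convex $f$ is only a \emph{subset} of the convex-analytic subdifferential of $f^\star$; this does not affect your conclusion, since differentiability of $f^\star$ at $y$ still pins the argmax to the single point $\nabla f^\star(y)$.
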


\begin{proof}
\bu First, note that $J_\nu $  is actually continuous: if $C(X)\ni f_j\ra f$ in $C^0$ then $f_j^\star\ra f^\star$ pointwise and hence uniformly \cite[Theorem 10.8]{Rock} so $J$ is continuous. 

\bu Convexity of $J$ is elementary:
$$
\begin{aligned}
J_\nu\Big(\frac{\th+\chi}2\Big)
&=\int_Y\sup_x\Big[\langle x,y \rangle- \frac{\th(x)+\chi(x)}2
\Big]\nu\cr
&\le
\frac12
\int_Y\sup_x[\langle x,y \rangle- \th(x)]\nu+ 
\frac12
\int_Y\sup_x[\langle x,y \rangle- \chi(x)]\nu
\cr
&=
\frac12J_\nu(\th)+\frac12J_\nu(\chi).
\end{aligned}
$$

\bu Convexity of $W_2^2(-\,\cdot\,,\nu)$ then follows from  \eqref{w22jeq}, being the supremum of affine functionals, which also implies lower semi-continuity (the supremum of continuous functions).

\bu Legendre duality was already proven in  \eqref{w22jeq}.

\bu Legendre duality implies that
\begin{equation}
\begin{aligned}
\label{w22jineq}
J_\nu(f)+W_2^2(\mu,\nu)\ge -\langle f,\mu \rangle .
\end{aligned}
\end{equation}
Fix $f$. 
To show Gateaux differentiability and  \eqref{djeq} 
it suffices to show that there exists exactly one $\mu$ for which equality is attained, and that then $\mu=\MA_\nu f$. Define $\mu=:(\nabla f^\star)_\#\nu$.
By Theorem \ref{ftotthm} and Corollary \ref{dualCor}
then $(\id\otimes\nabla f)_\#\mu\in\Pi(\mu,\nu)$ is an optimal transport plan  and
$$
W_2^2(\mu,\nu)
= -\langle f,\mu \rangle-\langle f^\star,\nu\rangle
= -\langle f,\mu \rangle-J_\nu(f).
$$ 
Now, $\mu=(\nabla f^\star)_\#\nu=\MA_\nu f$ by definition. Thus, it suffices to show that this $\mu$ is the only one attaining equality in  \eqref{w22jineq}, i.e., it suffices to show that $J $ is strictly convex.
Suppose $\alpha$ is another such measure, i.e.,
$$
W_2^2(\alpha,\nu)
= -\langle f,\alpha \rangle-\langle f^\star,\nu\rangle.
$$
From Theorem \ref{ftotthm} and Corollary \ref{dualCor}
$(\id\otimes\nabla f)_\#\alpha\in\Pi(\alpha,\nu)$ is an optimal transport plan
and $\alpha=(\nabla f^\star)_\#\nu$ so $\alpha=\mu$.
\end{proof}

\subsection{Rate function for Monge--Amp\`ere }
\label{}

Let  $\be\in\RR$ and let 
$$
\mu_0 \in P(X)
$$
be a fixed reference probability  measure. We are interested in the Monge--Amp\`ere equation
\begin{equation}
\begin{aligned}
\label{maeq}
\MA_\nu f=e^{\be f}\mu_0 \Big/ \int_X e^{\be f}\mu_0.
\end{aligned}
\end{equation}

Define $F_{\be,\nu}:C(X)\ra \RR$ by
\begin{equation}
\begin{aligned}
\label{Feq}
F_{\be,\mu_0,\nu}(\th) : =
\frac1\beta I_{
\mu_0}(\be\th) +  J_\nu(\th).
\end{aligned}
\end{equation}

By \eqref{djeq} and the proof of Lemma  \ref{Entlemma}:
\begin{lemma}
\label{fminlemma}
$F_{\be,\mu_0,\nu}$ is Gateaux differentiable and
\begin{equation}
\begin{aligned}
\label{dfbetaeq}
dF_{\be,\mu_0,\nu}|_\th=
e^{\be f}\mu_0 \Big/ \int_X e^{\be f}\mu_0-\MA_\nu f.
\end{aligned}
\end{equation}
\end{lemma}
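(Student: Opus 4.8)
The plan is to split $F_{\be,\mu_0,\nu}=\frac1\be I_{\mu_0}(\be\,\cdot\,)+J_\nu$ (see \eqref{Feq}) and differentiate each summand. The second summand is already done: Theorem \ref{JW2Thm} gives that $J_\nu$ is Gateaux differentiable with $dJ_\nu|_\th=-\MA_\nu\th$, which is \eqref{djeq}. So the whole task reduces to showing that $I_\mu$ is Gateaux differentiable at every $\psi\in C(X)$ with derivative $dI_\mu|_\psi = e^\psi\mu\big/\int_X e^\psi\mu\in P(X)\subset C(X)^*$; once that is in hand, the chain rule for Gateaux derivatives (the factor $\be$ coming from $\psi=\be\th$ cancels the $\frac1\be$ out front, so $d\big[\frac1\be I_{\mu_0}(\be\,\cdot\,)\big]\big|_\th = dI_{\mu_0}|_{\be\th} = e^{\be\th}\mu_0\big/\int_X e^{\be\th}\mu_0$) and adding the two differentials gives \eqref{dfbetaeq}.

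For the derivative of $I_\mu$ I would simply differentiate under the integral sign: for $\psi,\chi\in C(X)$,
$$
\frac{d}{dt}\Big|_{t=0}\log\int_X e^{\psi+t\chi}\mu
=
\frac{\int_X \chi\,e^\psi\mu}{\int_X e^\psi\mu}
=
\Big\langle\chi,\ \frac{e^\psi\mu}{\int_X e^\psi\mu}\Big\rangle,
$$
which exhibits the Gateaux derivative as the probability measure $e^\psi\mu/\int e^\psi\mu$; linearity and boundedness of $\chi\mapsto\langle\chi,e^\psi\mu/\int e^\psi\mu\rangle$ on $C(X)$ are immediate. This is precisely the computation already carried out, in Legendre-dual form, in the proof of Lemma \ref{Entlemma} (the equality case there being $\nu=e^\th\mu/\int e^\th\mu$), so I would cite it rather than redo it.

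The only point that needs any care — and it is genuinely minor — is justifying the interchange of $\frac{d}{dt}$ and $\int_X$: since $\mu_0$ is a probability measure and, for $|t|\le1$, the functions $e^{\psi+t\chi}$ and $\chi e^{\psi+t\chi}$ are bounded uniformly in $t$ (here one uses that $\psi,\chi$ lie in $C(X)=C^0(X)\cap L^\infty(X)$, and/or that $X$ is compact), dominated convergence applies and differentiation under the integral sign is legitimate. Everything else is the chain rule together with the already-proved formula \eqref{djeq}, so I do not expect any real obstacle.
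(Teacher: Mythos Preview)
Your proposal is correct and follows exactly the route the paper indicates: the paper's entire proof is the one-line citation ``By \eqref{djeq} and the proof of Lemma \ref{Entlemma}'', and you have simply spelled out what that means---using \eqref{djeq} for the $J_\nu$ term and the computation from Lemma \ref{Entlemma} (plus the elementary differentiation under the integral sign you justify) for the $I_{\mu_0}$ term, then combining via the chain rule. There is nothing to add.
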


Finally, we can define the rate function underlining the  Monge--Amp\`ere equation:
\begin{equation}
\begin{aligned}
\label{geq}
G_{\be,\mu_0,\nu}: =\be W^2_2(\,\cdot\,,\nu)+\Ent(\mu_0,\,\cdot\,)+C,
\end{aligned}
\end{equation}
where $C $  is a constant that will guarantee the function is nonnegative and zero at its minimum.

\begin{prop}
\label{gprop}
Assume that $F_{\be,\mu_0,\nu}$ admits a unique (up to a constant)  minimizer 
$\phi_{\min}$. Then $G_{\be,\mu_0,\nu}$ admits a unique minimizer
$\mu=\MA_\nu\phi_{\min}$. The converse is also true.
\end{prop}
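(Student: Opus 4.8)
The plan is to deduce the equivalence from a single Young-type inequality relating $F:=F_{\be,\mu_0,\nu}$ and $G:=G_{\be,\mu_0,\nu}$ whose equality case is exactly the \MAeq \eqref{maeq}. I would assume for concreteness that $\be>0$, so that both functionals are convex (this is the only place the sign enters; see the last paragraph). Applying inequality \eqref{entIeq} from the proof of Lemma \ref{Entlemma} with reference measure $\mu_0$ and test function $\be\th$, namely $I_{\mu_0}(\be\th)+\Ent(\mu_0,\mu)\ge\be\langle\th,\mu\rangle$, together with inequality \eqref{w22jineq} from Theorem \ref{JW2Thm} multiplied by $\be$, namely $\be J_\nu(\th)+\be W_2^2(\mu,\nu)\ge-\be\langle\th,\mu\rangle$, and adding, one gets for every $\th\in C(X)$ and every $\mu$
\begin{equation*}
\be F(\th)+G(\mu)-C=\big(I_{\mu_0}(\be\th)+\Ent(\mu_0,\mu)\big)+\be\big(J_\nu(\th)+W_2^2(\mu,\nu)\big)\ge \be\langle\th,\mu\rangle-\be\langle\th,\mu\rangle=0,
\end{equation*}
with both sides $+\infty$ when $\mu\notin P(X)$. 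Since a sum of two ``$\ge$'' is an equality only if each is, the pair $(\th,\mu)$ saturates the display iff both equality clauses hold: by Lemma \ref{Entlemma} the first forces $\mu=e^{\be\th}\mu_0\big/\int_X e^{\be\th}\mu_0$, and by the strict-convexity argument in the proof of Theorem \ref{JW2Thm} the second forces $\mu=\MA_\nu\th$; jointly these say precisely that $\th$ solves \eqref{maeq} and $\mu=\MA_\nu\th$.

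\emph{Forward direction.} Let $\phi_{\min}$ minimize $F$. Since $F$ is Gateaux differentiable (Lemma \ref{fminlemma}), $dF|_{\phi_{\min}}=0$, so \eqref{dfbetaeq} gives $\MA_\nu\phi_{\min}=e^{\be\phi_{\min}}\mu_0\big/\int_X e^{\be\phi_{\min}}\mu_0=:\mu$; this $\mu$ is unchanged if $\phi_{\min}$ is shifted by a constant, hence is well defined. Thus $(\phi_{\min},\mu)$ saturates the inequality: $\be F(\phi_{\min})+G(\mu)=C$. Consequently $G(\mu')\ge C-\be F(\phi_{\min})=G(\mu)$ for all $\mu'$, so $\mu$ minimizes $G$; and if $\mu'$ is another minimizer then $\be F(\phi_{\min})+G(\mu')=C$ forces $(\phi_{\min},\mu')$ to saturate, whence $\mu'=\MA_\nu\phi_{\min}=\mu$. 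So $G$ has the unique minimizer $\MA_\nu\phi_{\min}$.

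\emph{Converse.} Let $\mu_{\min}$ minimize $G$; then $\mu_{\min}\in P(X)$, $\Ent(\mu_0,\mu_{\min})<\infty$, and $0\in\partial G(\mu_{\min})$. Because $X$ is compact, $W_2^2(\,\cdot\,,\nu)$ is finite and continuous on $P(X)$ for the weak topology, so the Moreau--Rockafellar sum rule applies and yields $v\in\partial_\mu W_2^2(\mu_{\min},\nu)$ and $w\in\partial_\mu\Ent(\mu_0,\mu_{\min})$ with $\be v+w=0$. By Fenchel reciprocity for the dualities of Lemma \ref{Entlemma} and Theorem \ref{JW2Thm} (using \eqref{djeq} and that $\MA_\nu$ is constant-invariant), $w\in\partial\Ent(\mu_0,\mu_{\min})$ means $\mu_{\min}=e^{w}\mu_0\big/\int_X e^{w}\mu_0$, while $v\in\partial_\mu W_2^2(\mu_{\min},\nu)$ means $\MA_\nu(-v)=\mu_{\min}$. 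Setting $\phi:=w/\be=-v$ gives $\MA_\nu\phi=\mu_{\min}=e^{\be\phi}\mu_0\big/\int_X e^{\be\phi}\mu_0$, i.e.\ $dF|_\phi=0$ by \eqref{dfbetaeq}. Then $(\phi,\mu_{\min})$ saturates the inequality, so $\be F(\th)\ge C-G(\mu_{\min})=\be F(\phi)$ for all $\th$, i.e.\ $\phi$ minimizes $F$; and if $\phi'$ is another minimizer, saturation of $(\phi',\mu_{\min})$ gives $e^{\be\phi'}\mu_0\big/\int_X e^{\be\phi'}\mu_0=e^{\be\phi}\mu_0\big/\int_X e^{\be\phi}\mu_0$, hence $\phi'=\phi+\mathrm{const}$. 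Thus $F$ has a minimizer, unique up to constants, with $\mu_{\min}=\MA_\nu\phi$.

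I expect the converse to be the delicate step: one must justify the subdifferential sum rule for $G$ at its minimizer and, above all, read off the two ``potentials'' $w$ and $v$ from $0\in\partial G(\mu_{\min})$ via Fenchel reciprocity while keeping the signs straight in $W_2^2(\,\cdot\,,\nu)=J_\nu^\star(-\,\cdot\,)$ and $\Ent(\mu_0,\cdot)=I_{\mu_0}^\star$. A subdifferential-free alternative would be to differentiate $G$ along segments $\mu_{\min}+t(\mu-\mu_{\min})$ inside $P(X)$ using the Kantorovich-dual formula (Corollary \ref{dualCor}) and the envelope theorem: the first-order inequality forces $\log(\mu_{\min}/\mu_0)-\be f_{\mu_{\min}}$ to be constant for an optimal Kantorovich potential $f_{\mu_{\min}}$, and then $\phi:=f_{\mu_{\min}}$ does the job as above. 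Finally, for $\be\le 0$ the functionals $F,G$ are no longer convex; one still has the two pointwise inequalities and the first-order characterization, but the passage from critical point to minimizer (and the degenerate case $\be=0$) must be argued separately, in line with \cite{Berm,Hult}.
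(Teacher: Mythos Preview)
Your forward direction for $\be>0$ is exactly the paper's argument: add the two Legendre inequalities \eqref{entIeq} and \eqref{w22jineq} (the latter scaled by $\be$), identify the equality case with \eqref{maeq}, and use that $dF|_{\phi_{\min}}=0$ forces saturation at $\mu=\MA_\nu\phi_{\min}$.

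Where the paper differs is in the handling of $\be<0$, which you flag as needing a separate argument but do not supply. The paper's trick is short and worth knowing: rather than multiplying \eqref{w22jineq} by $\be$ (which flips it), for each fixed $\mu$ one first \emph{chooses} $\phi=\phi(\mu)\in C(X)$ realizing equality in \eqref{w22jineq}, i.e.\ a Kantorovich potential for $\mu$. Then only the entropy inequality is used as an inequality, giving $G(\mu)\ge -\be F(\phi(\mu))$; and now, since $-\be>0$ and $\phi_{\min}$ minimizes $F$, one gets $-\be F(\phi(\mu))\ge -\be F(\phi_{\min})$. The equality analysis is the same as yours. This avoids any appeal to convexity of $G$ and sidesteps the sign issue entirely.

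On the converse: the paper in fact does not write out a proof of it, so your attempt is genuinely filling a gap. Your Moreau--Rockafellar route is correct in spirit but, as you note, the sum rule and Fenchel reciprocity on $\calM(X)$ (with $W_2^2$ and $\Ent$ extended by $+\infty$ off $P(X)$) need careful justification. The alternative you sketch at the end---pick an optimal Kantorovich potential $\phi$ for $\mu_{\min}$, test $G$ along $t\mapsto(1-t)\mu_{\min}+t\mu$ and read off that $\log(\mu_{\min}/\mu_0)-\be\phi$ is constant---is both lighter and exactly in the spirit of the paper's $\be<0$ argument (which also begins by fixing such a $\phi$ for a given $\mu$). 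I would lead with that version and drop the subdifferential machinery.
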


Before going into the proof, let us motivate it with a general observation about Legendre duals in finite dimensions. If
$$
F=f_1+f_2
$$
is the sum of two differentiable strictly convex functions, and $x$ is the unique minimum of $F$ then
$$
G(y):=f_1^\star(y)+f_2^\star(-y)
$$
has a unique minimum at $df_1(x)$.
Indeed, 
$$
df_1(x)= -df_2(x)
$$
while ($G$ is differentiable since $f_1^\star$ and $f_2^\star$ are
by the strict convexity of $f_1,f_2$ \cite[Theorem 26.3] {Rock})
$$
dG(y)=df_1^\star(y)-df_2^\star(-y)
=(df_1)^{-1}(y)-(df_2)^{-1}(-y)
$$
and setting $y=df_1(x)=-df_2(x)$
$$
dG(y)=(df_1)^{-1}(df_1(x))-(df_2)^{-1}(--df_2(x))=x-x=0.
$$
Thus, $y$ is a critical point of the convex function $G$, hence a minimum point.
This is the only minimum point since the proof is reversible: if $dG(\tilde  y)=0$ we get 
$df_1^\star(\tilde  y)=df_2^\star(-\tilde y)$ so 
$df_1(\tilde x)= -df_2(\tilde x)$ for $\tilde x=df_1^\star(\tilde y)$, so then $\tilde x$
is a critical point of $F$, hence a minimum, so then
$x=\tilde x$ since by assumption $x$ was the unique minimum. 
Thus $df_1^\star( y)=df_1^\star(\tilde y)$ implying $y=\tilde y$ 
if $f_1^\star$ is strictly convex, but this follows from differentiability of $f_1$ \cite[Theorem 26.3] {Rock}.

\begin{proof}
Essentially, the conclusion of the finite-dimensional discussion above holds also in our situation by chasing through the definitions and avoiding the use of
\cite[Theorem 26.3] {Rock}. Here goes.

First, by Theorem \ref{JW2Thm} and \eqref{LegBasicProp},
\begin{equation*}
\begin{aligned}
W_2^2(\mu,\nu)+J_\nu(f)\ge -\langle f,\mu \rangle,
\qq \h{equality if and only if $\mu=\MA_\nu f$}.
\end{aligned}
\end{equation*}
Second, by Lemma \ref{Entlemma} and \eqref{LegBasicProp},
\begin{equation*}
\begin{aligned}
\Ent(\mu_0,\mu)+I_{\mu_0}(f)\ge\langle f,\mu\rangle,
\qq \h{equality if and only if $\mu= e^f\mu_0/\int_Xe^f\mu_0$}.
\end{aligned}
\end{equation*}
Let $\phi_{\min}$ be the minimizer of $F_{\be,\mu_0,\nu}$.
By Lemma \ref{fminlemma}
\begin{equation}
\begin{aligned}
\label{fminlemmaeq}
\MA_\nu \phi_{\min}=e^{\be \phi_{\min}}\mu_0 \Big/ \int_X e^{\be \phi_{\min}}\mu_0.
\end{aligned}
\end{equation}
\bu Assume first $\be>0$. Then setting $f=\phi_{\min}$ and $f=\be\phi_{\min}$, respectively,
 in the inequalities above
\begin{equation*}
\begin{aligned}
\label{}
G_{\be,\mu_0,\nu}(\mu)
&=
\be W^2_2(\mu,\nu)+\Ent(\mu_0,\mu)+C,
\cr
&\ge
 -\be\langle \phi_{\min},\mu \rangle-\be J_\nu(\phi_{\min})
-I_{\mu_0}(\be\phi_{\min})+\langle \be\phi_{\min},\mu\rangle
\cr
&=
 -\be J_\nu(\phi_{\min})
-I_{\mu_0}(\be\phi_{\min})
\cr
&=-\be F_{\be,\mu_0,\nu}(\phi_{\min}),
\end{aligned}
\end{equation*}
with equality if and only if $\mu= e^{\be \phi_{\min}}\mu_0/\int_Xe^{\be\phi_{\min}}\mu_0=\MA_\nu \phi_{\min}$. Note that  $-\be F_{\be,\mu_0,\nu}(\phi_{\min})$ is some
constant independent of $\mu$. Thus,  $\mu=\MA_\nu \phi_{\min}$ is the unique
minimizer of $G_{\be,\mu_0,\nu}$.

\bu Assume now that $\be<0$.
Fix $\mu\in \calM(X)$. Let $\phi\in C(X)$ be such that equality hold in \eqref{w22jineq}. Now applying above argument to 
$f=\phi$ and $f=\be\phi$, respectively, gives
\begin{equation*}
\begin{aligned}
\label{}
G_{\be,\mu_0,\nu}(\mu)
\ge-\be F_{\be,\mu_0,\nu}(\phi)
\ge-\be F_{\be,\mu_0,\nu}(\phi_{\min}),
\end{aligned}
\end{equation*}
(the last inequality simply because $\phi_{\min}$ is a minimizer of
$F_{\be,\mu_0,\nu}$ and $\be<0$)
with equality in the first inequality 
if and only if $\mu= e^{\be \phi}\mu_0/\int_Xe^{\be\phi}\mu_0$
and in the second inequality if and only if $\phi= \phi_{\min}$
so overall $\mu= e^{\be \phi_{\min}}\mu_0/\int_Xe^{\be\phi_{\min}}\mu_0
=\MA_\nu\phi_{\min}$ by  \eqref{fminlemmaeq}. 
\end{proof}

\begin{remark} We leave the details for the simpler case $\beta=0$ to the reader
(in this special case the Wasserstein distance does not even appear, and one is basically reduced to Sanov's Theorem (Corollary \ref{SanovCor})). Of course,
one has to also define $F_{0,\mu_0,\nu}$ appropriately by taking the derivative
at $\be=0$ of \eqref{Feq}.
\end{remark}

\section{Moment generating function for Monge--Amp\`ere }
\label{MomentMASec}

Our goal is now to construct a sequence of probability measures on $P(X)$
(i.e., random measures, or elements of $P(P(X))$) whose 
moment generating function (for some normalization) is precisely
$G_{\be,\mu_0,\nu}$.

Naturally, in view of Sanov's Theorem (Corollary \ref{SanovCor}), the entropy term in 
$G_{\be,\mu_0,\nu}$ will come from $\mu_0^{\otimes n}$. To obtain the Wasserstein distance term we will need to multiply the symmetric measure by a symmetric function that captures discrete optimal transport distance.
Here is the key observation \cite[Theorem 3.2]{EllisHavenTurk}.
Let $H_{n^d}:X^{n^d}\ra\RR$ (the reader can basically consider the examples
\eqref{hnpereq} and \eqref{hntropeq} although the next lemma is more general).
Set 
$$
\Gamma_{\be,n}:=\delta^{n^d}_\#\Big( e^{-\be H_{n^d}} \mu_0^{\otimes {n^d}}\Big)
\Big/ Z_{\be,n}\in P(P(X)),
$$
with $Z_{\be,n}:=\int_{X^{n^d}} e^{-\be H_{n^d}} \mu_0^{\otimes {n^d}}$
is the normalizing constant guaranteeing that 
$
\Gamma_{\be,n}
$ is a probability measure; let 
\begin{equation}
\begin{aligned}
\label{Cbeneq}
C_{\be}:=\lim_n\frac1{n^d}\log Z_{\be,n},
\end{aligned}
\end{equation}
where the limit exists and is finite according to Claim \ref{Cbenclaim} below. 

\begin{lemma}
\label{EHnLemma}
Let $E:P(X)\ra\RR$ be continuous and let $H_{n^d}:X^{n^d}\ra\RR$.
Suppose that $\lim_{n\ra \infty }|| H_{n^d}/n^d-E\circ \delta^{n^d}||_{L^ \infty (X^{n^d})}=0$.
\newline
Then LDP$(\Gamma_{\be,n},{n^d})$ with rate function $\be E+\Ent(\mu_0,\,\cdot\,)+C_{\be,n}$,
where 
$$
C_{\be}=-\inf_\mu\big[\be E(\mu)+\Ent({\mu_0},\mu)\big].
$$
\end{lemma}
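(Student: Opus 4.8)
The plan is to deduce this from Sanov's Theorem (Corollary \ref{SanovCor}) for the untilted random measures $\Gamma_n:=\delta^{n^d}_\#\mu_0^{\otimes n^d}$, which satisfy LDP$(\Gamma_n,n^d)$ with rate function $\Ent(\mu_0,\,\cdot\,)$, and then to ``tilt'' this LDP by the density $e^{-\be H_{n^d}}$. The role of the hypothesis $\|H_{n^d}/n^d-E\circ\delta^{n^d}\|_{L^\infty(X^{n^d})}=:\eps_n\to0$ is that on the set $(\delta^{n^d})^{-1}(B_\rho(\nu))$ one has $|H_{n^d}(x)/n^d-E(\nu)|\le\eps_n+\omega_E(\rho)$, where $\omega_E$ is a modulus of continuity for the continuous functional $E$ on the compact space $P(X)$. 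Pushing forward under $\delta^{n^d}$ and dividing by $Z_{\be,n}$ this gives, for every $\nu\in P(X)$ and every radius $\rho>0$,
\[
\frac1{n^d}\log\Gamma_{\be,n}(B_\rho(\nu))=-\be E(\nu)+\frac1{n^d}\log\Gamma_n(B_\rho(\nu))-\frac1{n^d}\log Z_{\be,n}+\theta_n(\rho),\qquad |\theta_n(\rho)|\le|\be|(\eps_n+\omega_E(\rho)).
\]
Note that the sign of $\be$ plays no role in this estimate — this is exactly the point of the approach advertised in the introduction, which avoids first reducing to $\be=0$.

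I would then invoke Proposition \ref{ldpprop}: $P(X)$ is a compact metric space (Prokhorov), so it suffices to verify that the two limits in \eqref{grateeq} exist and agree, for $\mu_n=\Gamma_{\be,n}$ and $r_n=n^d$. Apply $\limsup_n$ and then $\lim_{\rho\to0}$ to the displayed identity, using $\eps_n\to0$, $\omega_E(\rho)\to0$ (uniform continuity of $E$ on $P(X)$), $\lim_n\frac1{n^d}\log Z_{\be,n}=C_\be$ (the existence of this limit is Claim \ref{Cbenclaim} below), and the fact — which the forward implication of Proposition \ref{ldpprop} extracts from Sanov's LDP — that
\[
\lim_{\rho\to0}\limsup_n\frac1{n^d}\log\Gamma_n(B_\rho(\nu))=\lim_{\rho\to0}\liminf_n\frac1{n^d}\log\Gamma_n(B_\rho(\nu))=-\Ent(\mu_0,\nu).
\]
The same computation with $\liminf_n$ in place of $\limsup_n$ yields the same value, so both sides of \eqref{grateeq} for $\Gamma_{\be,n}$ equal $-\big(\be E(\nu)+\Ent(\mu_0,\nu)+C_\be\big)$. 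Hence Proposition \ref{ldpprop} gives LDP$(\Gamma_{\be,n},n^d)$ with rate function $I:=\be E+\Ent(\mu_0,\,\cdot\,)+C_\be$, which is lower semicontinuous ($E$ continuous, $\Ent(\mu_0,\,\cdot\,)$ lsc).

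It remains to identify the constant. Once the LDP is established, Remark \ref{zeroremark} (take $O=C=P(X)$) forces $\inf_{P(X)}I=0$, i.e. $\inf_\mu\big[\be E(\mu)+\Ent(\mu_0,\mu)\big]+C_\be=0$, which is precisely $C_\be=-\inf_\mu\big[\be E(\mu)+\Ent(\mu_0,\mu)\big]$ and in particular shows $I\ge0$. (Alternatively $C_\be$ can be computed directly from $Z_{\be,n}=e^{O(n^d\eps_n)}\int_{P(X)}e^{-\be n^d E}\Gamma_n$ via a Varadhan-type bound for the Sanov LDP, but routing through $\inf I=0$ sidesteps introducing Varadhan's lemma.)

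The main obstacle I anticipate is bookkeeping the topology rather than any genuine difficulty: one must make sure that $P(X)$ is compact metric so that Prokhorov, Proposition \ref{ldpprop}, and uniform continuity/boundedness of $E$ are all available, and that Sanov's Theorem applies to $\mu_0$ on $X$ in the quoted form. Granting this, the tilting identity is elementary and the remaining steps are mechanical. A minor point to handle cleanly is the clash between the symbol ``$d$'' used for the ball radius in Proposition \ref{ldpprop} and the exponent $d$ in $n^d$ (I would rename the radius $\rho$ throughout), together with the observation that $\limsup_n\frac1{n^d}\log\Gamma_n(B_\rho(\nu))$ and $\limsup_n\frac1{n^d}\log\Gamma_n(\overline{B_\rho(\nu)})$ have the same limit as $\rho\to0$, so the closed-ball bound delivered by the LDP upper bound can be used interchangeably with the open-ball statement appearing in \eqref{grateeq}.
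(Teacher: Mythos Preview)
Your proposal is correct and is essentially the same argument as the paper's: both invoke Proposition \ref{ldpprop} on the compact metric space $P(X)$, use the hypothesis to replace $H_{n^d}/n^d$ by $E\circ\delta^{n^d}$ (up to an error vanishing with $n$ and the ball radius), and then feed in Sanov's Theorem together with Claim \ref{Cbenclaim} for the normalizing constant. Your write-up is somewhat more explicit about the error bookkeeping (introducing $\eps_n$, $\omega_E$, $\theta_n(\rho)$), and you identify $C_\be$ via $\inf I=0$ rather than by the direct computation of Claim \ref{Cbenclaim}, but these are presentational rather than substantive differences.
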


\begin{remark} {\rm
\label{}
In some sense, the constant $C_{\be}$ has to equal this value if this function is to be a rate function, indeed this
way the infimum is equal to zero, as it must by Remark \ref{zeroremark}.
} \end{remark}

\begin{proof}
This does not seem to follow easily from a moment generating function computation.
Instead, we use the more direct criterion given by Proposition \ref{ldpprop}.
Now  \hbox{$\calX = P(X)$}, with balls taken with respect to the $p$-Wasserstein distance ($p\in[1,\infty)$).
Here we need the fact that when $X$ is compact,
$P(X)$ equipped with the  $p$-Wasserstein distance function is a compact metric space
\cite[\S2]{AmbGig}, \cite{Villani:oldandnew}
(the point is that ``$p$-Wasserstein distance metrizes the weak topology" and that 
$P(X)$ is compact with respect to the weak topology).
We compute
(and apply Claim \ref{Cbenclaim} below), 
\begin{equation*}
\begin{aligned}
\lim _{e\ra 0}\limsup_{n\ra  \infty } \frac1{n^d}\log \Gamma_{\be,n} (B_e (\mu))
&=
\lim _{e\ra 0}\limsup_{n\ra  \infty } 
\frac1{n^d}\log \int_{(\delta^{n^d })^{-1}(B_e (\mu))}e^{-\be H_{n^d}} \mu_0^{\otimes n^d }-C_{\be,n}
\cr
&=
\lim _{e\ra 0}\limsup_{n\ra  \infty } 
\frac1{n^d}\log \int_{(\delta^{n^d })^{-1}(B_e (\mu))}e^{-\be n^d (E\circ\delta^n+o(1))} \mu_0^{\otimes n^d }-C_{\be,n}
\cr
&=
-\be E(\mu)+
\lim _{e\ra 0}\limsup_{n\ra  \infty } 
\frac1{n^d}\log \int_{(\delta^{n^d })^{-1}(B_e (\mu))} \mu_0^{\otimes n^d }-C_{\be,n}
\cr
&=
-\be E(\mu)-\Ent({\mu_0},\mu)-C_{\be,n}
,
\end{aligned}
\end{equation*}
by Corollary \ref{SanovCor} and Proposition \ref{ldpprop}
(remembering the minus sign in the latter). 
Similarly for the liminf. Applying Proposition \ref{ldpprop} again, we are done.
\end{proof}

\begin{claim}
\label{Cbenclaim}
The limit  \eqref{Cbeneq} exists and is finite. In fact, 
$$
C_{\be,n}=-\inf_\mu\big[\be E(\mu)+\Ent({\mu_0},\mu)\big].
$$
\end{claim}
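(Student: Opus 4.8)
The plan is to read $\frac1{n^d}\log Z_{\be,n}$ off as a Laplace-type asymptotic (a special case of Varadhan's integral lemma), by combining two inputs. First, by hypothesis $\eps_n:=\|H_{n^d}/n^d-E\circ\delta^{n^d}\|_{L^\infty(X^{n^d})}\to0$, so on $X^{n^d}$ the integrand factors as $e^{-\be H_{n^d}}=e^{-\be n^d\, E\circ\delta^{n^d}}\cdot e^{O(n^d\eps_n)}$ with the $O$ uniform. Second, Sanov's Theorem (Corollary \ref{SanovCor}) gives LDP$(\Gamma_n^0,n^d)$, where $\Gamma_n^0:=\delta^{n^d}_\#\mu_0^{\otimes n^d}$, with rate function $\Ent(\mu_0,\,\cdot\,)$; throughout we work on $P(X)$ with the $p$-Wasserstein metric, which makes it a compact metric space, and we let $\omega$ denote a modulus of continuity of the continuous function $E$ on it. So this claim is proved by essentially the computation already carried out in the proof of Lemma \ref{EHnLemma}, but with the integral taken over all of $X^{n^d}$ rather than over $(\delta^{n^d})^{-1}(B_e(\mu))$.

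For the lower bound, fix $\mu\in P(X)$ and a radius $e>0$, restrict the integral defining $Z_{\be,n}$ to $(\delta^{n^d})^{-1}(B_e(\mu))$, and use that there $-\be H_{n^d}\ge -\be n^d E(\mu)-|\be|n^d(\omega(e)+\eps_n)$ (valid for both signs of $\be$). This gives
\[
\frac1{n^d}\log Z_{\be,n}\ \ge\ -\be E(\mu)-|\be|\big(\omega(e)+\eps_n\big)+\frac1{n^d}\log\Gamma_n^0(B_e(\mu)) .
\]
Taking $\liminf_n$, invoking the open-set inequality of LDP$(\Gamma_n^0,n^d)$ in the form $\liminf_n\frac1{n^d}\log\Gamma_n^0(B_e(\mu))\ge-\Ent(\mu_0,\mu)$, then letting $e\to0$ and taking the supremum over $\mu$, we obtain $\liminf_n\frac1{n^d}\log Z_{\be,n}\ge\sup_\mu[-\be E(\mu)-\Ent(\mu_0,\mu)]$.

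For the upper bound, cover the compact space $P(X)$ by finitely many balls $B_e(\mu_1),\dots,B_e(\mu_N)$; their closures also cover $P(X)$, and on $(\delta^{n^d})^{-1}(\overline{B_e(\mu_j)})$ one has $-\be H_{n^d}\le -\be n^d E(\mu_j)+|\be|n^d(\omega(e)+\eps_n)$, whence
\[
Z_{\be,n}\ \le\ \sum_{j=1}^N e^{\,n^d(-\be E(\mu_j)+|\be|(\omega(e)+\eps_n))}\ \Gamma_n^0\big(\overline{B_e(\mu_j)}\big) .
\]
Since the number of summands is fixed, $\frac1{n^d}\log$ of the sum is asymptotically the maximum over $j$ of $\frac1{n^d}\log$ of the individual terms; applying the closed-set inequality of LDP$(\Gamma_n^0,n^d)$ to each $\overline{B_e(\mu_j)}$ and, when $\inf_{\overline{B_e(\mu_j)}}\Ent(\mu_0,\,\cdot\,)<\infty$, choosing a near-minimizer there (the $j$-th term is $-\infty$ otherwise), one gets
\[
\limsup_n\frac1{n^d}\log Z_{\be,n}\ \le\ \sup_\mu\big[-\be E(\mu)-\Ent(\mu_0,\mu)\big]+2|\be|\,\omega(e)+e ,
\]
and $e\to0$ removes the error by uniform continuity of $E$ on the compact $P(X)$. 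Combining the two bounds shows that the limit $C_\be$ exists and equals $\sup_\mu[-\be E(\mu)-\Ent(\mu_0,\mu)]=-\inf_\mu[\be E(\mu)+\Ent(\mu_0,\mu)]$, the asserted formula; finiteness then follows since $E$ is bounded on the compact $P(X)$ and $\Ent(\mu_0,\mu_0)=0$, giving $-\be E(\mu_0)\le C_\be\le |\be|\max_{P(X)}|E|$.

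The step I expect to be the main obstacle is the upper bound, specifically accounting for the integral over empirical measures far from $\mu_0$-absolute continuity, where $\Ent(\mu_0,\,\cdot\,)=\infty$: this is exactly where Wasserstein compactness of $P(X)$ is used, so that finitely many balls suffice and the combinatorial factor $\log N$ is killed after dividing by $n^d$ — one is in effect re-deriving the relevant half of Varadhan's lemma by hand. As an alternative to computing the formula directly, once the existence and finiteness of $C_\be$ have been established (which is what makes Lemma \ref{EHnLemma} applicable), the formula also follows from Lemma \ref{EHnLemma} together with Remark \ref{zeroremark}, since the infimum of a rate function vanishes, so $\inf_\mu[\be E(\mu)+\Ent(\mu_0,\mu)+C_\be]=0$.
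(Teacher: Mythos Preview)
Your proposal is correct and follows essentially the same route as the paper: a lower bound by restricting the integral defining $Z_{\be,n}$ to $(\delta^{n^d})^{-1}(B_e(\mu))$ and invoking Sanov, and an upper bound by covering the compact space $P(X)$ with finitely many balls and again invoking Sanov. The only cosmetic difference is that you track the error terms $\omega(e)$ and $\eps_n$ explicitly and use closed balls together with the closed-set LDP inequality for the upper bound, whereas the paper uses open balls and appeals back to the computation already carried out in the proof of Lemma~\ref{EHnLemma}; your bookkeeping is in fact slightly cleaner at that step.
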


\begin{proof}
By our previous computation the limit is bounded below, indeed, for any
$e>0$ and any $\mu$,
\begin{equation*}
\begin{aligned}
\label{}
\liminf_n\frac1{n^d}\log Z_{\be,n}
&=
\liminf_n\frac1{n^d}\log
\int_{X^{n^d}} e^{-\be H_{n^d}} \mu_0^{\otimes {n^d}}
\cr
&\ge
\liminf_n\frac1{n^d}\log
\int_{(\delta^{n^d })^{-1}(B_e (\mu))} e^{-\be H_{n^d}} \mu_0^{\otimes {n^d}},
\end{aligned}
\end{equation*}
so for every $\mu$,
\begin{equation*}
\begin{aligned}
\label{}
\liminf_n\frac1{n^d}\log Z_{\be,n}
&\ge
-\be E(\mu)-\Ent({\mu_0},\mu)
\cr
&\ge
\sup_\mu\big[-\be E(\mu)-\Ent({\mu_0},\mu)\big]
\cr
&=-\inf_\mu\big[\be E(\mu)+\Ent({\mu_0},\mu)\big]
,
\end{aligned}
\end{equation*}
and so,
\begin{equation*}
\begin{aligned}
\label{}
\limsup_n\frac1{n^d}\log Z_{\be,n}
&\ge
-\inf_\mu\big[\be E(\mu)+\Ent({\mu_0},\mu)\big]
.
\end{aligned}
\end{equation*}
Finally, by compactness of $P(X)$, fix $e>0$ and cover the space with finitely-many
balls $B_e(\mu_1),\ldots,B_e(\mu_k)$. Then, of course,
$$
\begin{aligned}
Z_{\be,n}
&=
\int_{P(X)} (\delta^{n^d })_{\#}\Big(e^{-\be H_{n^d}} \mu_0^{\otimes {n^d}}\Big)
\cr
&=
\int_{(\delta^{n^d })^{-1}(P(X))} e^{-\be H_{n^d}} \mu_0^{\otimes {n^d}}
\cr
&\le
\sum_{j=1}^k\int_{(\delta^{n^d })^{-1}(B_e(\mu_j))} 
e^{-\be H_{n^d}} \mu_0^{\otimes {n^d}}
\cr
&\le
k\sup_j\int_{(\delta^{n^d })^{-1}(B_e(\mu_j))} 
e^{-\be H_{n^d}} \mu_0^{\otimes {n^d}},
\end{aligned}
$$
so
$$
\begin{aligned}
\liminf_n\frac1{n^d}\log Z_{\be,n}
&\le
\liminf_n\frac1{n^d}\log k
+\liminf_n\frac1{n^d}\log \sup_j\int_{(\delta^{n^d })^{-1}(B_e(\mu_j))} 
e^{-\be H_{n^d}} \mu_0^{\otimes {n^d}}
\cr
&=
\liminf_n\frac1{n^d}\log \sup_j\int_{(\delta^{n^d })^{-1}(B_e(\mu_j))} 
e^{-\be H_{n^d}} \mu_0^{\otimes {n^d}}
\cr
&=
\sup_j\big[-\be E(\mu_j)-\Ent({\mu_0},\mu_j)\big]
\cr
&=
-\inf_j\big[\be E(\mu_j)+\Ent({\mu_0},\mu_j)\big]
\cr
&\le
-\inf_\mu\big[\be E(\mu)+\Ent({\mu_0},\mu)\big]
.
\end{aligned}
$$
Similarly,
$$
\begin{aligned}
\limsup_n\frac1{n^d}\log Z_{\be,n}
\le
-\inf_\mu\big[\be E(\mu)+\Ent({\mu_0},\mu)\big]
,
\end{aligned}
$$
so we conclude
$C_{\be,n}=\lim_n\frac1{n^d}\log Z_{\be,n}$ exists
and equals $-\inf_\mu\big[\be E(\mu)+\Ent({\mu_0},\mu)\big]$.
\end{proof}
\subsection{Finite-dimensional approximations of Wasserstein distance}
\label{FinDimWasSubSec}
In view of Lemma \ref{EHnLemma}, Proposition \ref{gprop} (and  \eqref{geq}), 
it remains for us to construct functions $\{H_n\}$ that approximate 
the (pull-back under the empirical map of the) Wasserstein distance $W^2_2(\,\cdot\,,\nu)$. We will do this in the special case $\nu=dx$.

Let $n\in\NN$. There are $n^d$ $1/n$-lattice points of the cube $[0,1]^d$,
and we denote them by $p_1,\ldots,p_{n^d}$. Set
$$
\phi^{(n)}_i(x):=\sum_{m\in\ZZ^d} e^{-n|x-p_i-m|^2}.
$$ 
A sort of ``theta function" for the real torus 
$$
\TT:=\RR^d/\ZZ^d.
$$
There are two sorts of symmetric functions on $\TT^{n^d}$ one may cook up from the $\phi_i$'s. First, consider the matrix
$$
\Phi(x_1,\ldots,x_n):=[\phi^{(n)}_i(x_j)]_{i,j=1}^{n^d}.
$$
Which functions $f$ of $\Phi(x_1,\ldots,x_n)$ are invariant under permutations, i.e.,
satisfy $$
f(\Phi(x_1,\ldots,x_n))=f(\Phi(x_{\sigma(1)},\ldots,x_{\sigma(n)}))?
$$
 In other words, which functions of a matrix are invariant under permutations of rows? Note that the determinant is only invariant up to a sign. However, the permanent is
fully invariant.
First, 
\begin{equation}
\begin{aligned}
\label{hnpereq}
H_n(x_1,\ldots,x_n):=-\frac1n\log\per\Phi(x_1,\ldots,x_n).
\end{aligned}
\end{equation}
Here,
$$
\per A:=\sum_{\sigma\in S_{n^d}}\prod_{i=1}^{n^d}A_{i\sigma(i)}.
$$
Second,
\begin{equation}
\begin{aligned}
\label{hntropeq}
H_n(x_1,\ldots,x_n):=-\frac1n\log
\tsper\Phi(x_1,\ldots,x_n)
.
\end{aligned}
\end{equation}
Here, the {\it semi-tropical permanent } is obtained from the permanent by replacing summation by supremum,
$$
\tsper A:=\sup_{\sigma\in S_{n^d}}\prod_{i=1}^{n^d}A_{i\sigma(i)}.
$$

\begin{lemma}
\label{HnWLemma}
For both \eqref{hnpereq}
and \eqref{hntropeq},
$\lim_{n\ra \infty }|| H_n/n^d-W_2^2\big(dx,\delta^n(\,\cdot\,)\big)||_{L^ \infty (X^n)}=0$.
\end{lemma}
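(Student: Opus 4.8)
\emph{Plan.} Write $N:=n^d$, let $\dist_\TT(x,y):=\min_{m\in\ZZ^d}|x-y-m|$ denote the flat distance on $\TT=\RR^d/\ZZ^d$, and understand $W_2^2$ as the quadratic-cost ($c=\dist_\TT^2$) Wasserstein distance on $\TT$, so that any two elements of $P(\TT)$ are within $W_2$-distance $\le\sqrt d/2$. I will run both formulas through two approximations: first reduce $H_n/n^d$ to the discrete transport cost between the \emph{fixed} lattice measure $\lambda_n:=\tfrac1N\sum_{i=1}^N\delta_{p_i}$ and the empirical measure $\delta^N(x_1,\dots,x_N)$ (this is the measure written $\delta^n(\,\cdot\,)$ in the statement), and then replace $\lambda_n$ by $dx$. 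Along the way the two choices \eqref{hnpereq} and \eqref{hntropeq} will differ by $O((\log n)/n)$, so it suffices to treat the semi-tropical one.

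\emph{From the theta functions to the discrete transport cost.} First I would establish that for all $n\ge1$, all $i$, and all $x\in\TT$,
$$
e^{-n\,\dist_\TT(x,p_i)^2}\ \le\ \phi^{(n)}_i(x)\ \le\ K_d\,e^{-n\,\dist_\TT(x,p_i)^2},\qquad K_d:=e^{d/4}\sup_{w\in\RR^d}\sum_{m\in\ZZ^d}e^{-|w-m|^2}<\infty,
$$
the lower bound being the dominant term of the series and the upper bound following by factoring out $e^{-n\dist_\TT(x,p_i)^2}$, observing that each remaining exponent $|x-p_i-m|^2-\dist_\TT(x,p_i)^2$ is $\ge0$ (so replacing $n$ by $1$ only enlarges the sum) and that $\dist_\TT(x,p_i)^2\le d/4$. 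Multiplying over any $\sigma\in S_N$ and setting $C_\sigma:=\sum_{i=1}^N\dist_\TT(x_{\sigma(i)},p_i)^2$, $C_{\min}:=\min_\sigma C_\sigma$, one gets $e^{-nC_{\min}}\le\tsper\Phi\le K_d^{\,N}e^{-nC_{\min}}$, whence, after applying $-\tfrac1n\log(\cdot)$ and dividing by $n^d$,
$$
\Big|\tfrac{1}{n^d}H_n-\tfrac{1}{n^d}C_{\min}\Big|\ \le\ \frac{N\log K_d}{n\cdot n^d}=\frac{\log K_d}{n}
$$
for \eqref{hntropeq}. For \eqref{hnpereq} one additionally uses $\tsper A\le\per A\le N!\,\tsper A$, which changes $H_n$ by at most $\tfrac1n\log N!$, i.e. by at most $\tfrac{d\log n}{n}$ after division by $n^d$. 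In either case the discrepancy with $C_{\min}/n^d$ is $O(1/n)$, uniformly in $(x_1,\dots,x_N)\in X^N$.

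\emph{Comparing $\lambda_n$ with $dx$.} Since $\lambda_n$ and $\delta^N(x)$ are empirical measures of equal mass, $N$ times any coupling is doubly stochastic, hence a convex combination of permutation matrices (Birkhoff--von Neumann; cf. the discussion of the discrete transport problem in \S\ref{OTSec}), so $\tfrac1{n^d}C_{\min}=W_2^2(\lambda_n,\delta^N(x))$. Moving the Lebesgue mass of the cube $p_i+[0,1/n)^d$ onto $p_i$ for each $i$ gives a coupling showing $W_2(\lambda_n,dx)\le\sqrt d/n$, so by the triangle inequality for $W_2$ and the bound $W_2\le\sqrt d/2$ on $P(\TT)$, for every $\mu\in P(\TT)$
$$
\big|W_2^2(\lambda_n,\mu)-W_2^2(dx,\mu)\big|\ \le\ W_2(\lambda_n,dx)\big(W_2(\lambda_n,\mu)+W_2(dx,\mu)\big)\ \le\ \frac{\sqrt d}{n}\cdot\sqrt d=\frac{d}{n}.
$$
Taking $\mu=\delta^N(x)$ and combining with the previous paragraph yields $\big\|H_n/n^d-W_2^2(dx,\delta^n(\,\cdot\,))\big\|_{L^\infty(X^N)}\le\tfrac{\log K_d+d}{n}$, with an extra $\tfrac{d\log n}{n}$ in the permanental case; both bounds tend to $0$.

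\emph{Main obstacle.} Everything above is elementary; the one genuinely load-bearing point is the bookkeeping in the second step. A multiplicative factor of size $K_d^{\,n^d}$ — coming from the torus tails of the theta functions — or of size $(n^d)!$ — coming from replacing $\sum$ by $\max$ in the permanent — sits inside the $-\tfrac1n\log(\cdot)$ and therefore becomes only an \emph{additive} error of order $n^{d-1}\log n$, which is $O(1/n)$ once divided by $n^d$. Thus the $1/n$ (rather than $1/n^d$) normalization in \eqref{hnpereq}--\eqref{hntropeq} is exactly calibrated to make the limit insensitive to these factors. One must also take a little care in the basic theta estimate to keep $K_d$ independent of how close $\dist_\TT(x,p_i)$ is to the cut value $\sqrt d/2$ — so that configurations with points near the cut locus, where two lattice translates are nearly equidistant, are handled uniformly — which is precisely what the \emph{replace $n$ by $1$} trick secures.
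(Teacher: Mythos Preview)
Your argument is correct and follows the same three-step route as the paper's proof (theta estimate $\Rightarrow$ discrete transport cost between $\lambda_n$ and $\delta^{n^d}(\,\cdot\,)$ $\Rightarrow$ triangle inequality to pass from $\lambda_n$ to $dx$), corresponding respectively to Claims~\ref{phiclaim}, \ref{fsigmaclaim}, and \ref{w2claim}. Your version is in fact more quantitative: the explicit two-sided bound $e^{-n\,\dist_\TT^2}\le\phi_i^{(n)}\le K_d\,e^{-n\,\dist_\TT^2}$ and the explicit coupling giving $W_2(\lambda_n,dx)\le\sqrt d/n$ yield a rate of $O((\log n)/n)$ where the paper settles for $o(1)$.
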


\begin{proof}
Since $\delta^{n^d} (p_1,\ldots , p_{n^d})\ra dx$ weakly (the points are dense and uniformly distributed) then in view of Claim \ref{w2claim} below, it suffices to show that 
$$\lim_{n\ra \infty }|| H_n/n^d- W_2^2\big(\delta^{n^d} (p_1,\ldots , p_{n^d}),\delta^{n^d}(\,\cdot\,)\big)||_{L^ \infty (X^{n^d})}=0.$$
This is a nice simplification since we have an explicit formula for the Wasserstein distance on the image of the empirical map! Indeed \cite[p. 5]{Villani},
$$
W_2^2\big(\delta^{n^d} (p_1,\ldots , p_{n^d}),\delta^{n^d}(x_1,\ldots, x_{n^d})\big)=
\inf_{\sigma\in S_{n^d}}\sum d(p_i,x_{\sigma(i)})^2.
$$
It is now a simple exercise to complete the proof using Claims 
\ref{fsigmaclaim} and \ref{phiclaim} below.
\end{proof}

\begin{claim}
\label{w2claim}
Let $M $ be compact manifold.  Let $x_1,\ldots, x_k\in M$
and $p_1,\ldots, p_k\in M$. Suppose that $\delta^k (p_1,\ldots , p_k)\ra \nu$ weakly.  Then 
$$
\lim_k
||W_2^2\big(\delta^k (p_1,\ldots , p_k),\delta^k(\,\cdot\,)\big)
-W_2^2\big(\nu,\delta^k(\,\cdot\,)\big)||_{L^ \infty (X^k)}=0.
$$
\end{claim}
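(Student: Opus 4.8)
The plan is to use that $W_2$ — with the squared geodesic distance as cost — is a genuine metric on $P(M)$ (since $M$ is compact, $P_2(M)=P(M)$ and all Wasserstein distances are finite), so that the triangle inequality is at our disposal, together with the fact that on a compact space $W_2$ metrizes the weak topology. Abbreviate $\pi_k:=\delta^k(p_1,\ldots,p_k)$ and, for $\vec x=(x_1,\ldots,x_k)\in M^k$, $\mu_{\vec x}:=\delta^k(x_1,\ldots,x_k)$; the quantity to be estimated is $\sup_{\vec x\in M^k}|W_2^2(\pi_k,\mu_{\vec x})-W_2^2(\nu,\mu_{\vec x})|$.

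First I would note that, since $d(x,y)^2\le\diam(M)^2$ for all $x,y\in M$, every coupling of two probability measures on $M$ has cost at most $\diam(M)^2$, so $W_2(\mu,\mu')\le\diam(M)$ for all $\mu,\mu'\in P(M)$. Next, by the triangle inequality for $W_2$, for every $\vec x\in M^k$,
$$
|W_2(\pi_k,\mu_{\vec x})-W_2(\nu,\mu_{\vec x})|\le W_2(\pi_k,\nu),
$$
and therefore, factoring the difference of squares,
$$
|W_2^2(\pi_k,\mu_{\vec x})-W_2^2(\nu,\mu_{\vec x})|
=|W_2(\pi_k,\mu_{\vec x})-W_2(\nu,\mu_{\vec x})|\cdot\bigl(W_2(\pi_k,\mu_{\vec x})+W_2(\nu,\mu_{\vec x})\bigr)
\le 2\diam(M)\,W_2(\pi_k,\nu).
$$
The right-hand side is independent of $\vec x$, so taking the supremum over $\vec x\in M^k$ gives
$$
\bigl\|W_2^2(\pi_k,\delta^k(\,\cdot\,))-W_2^2(\nu,\delta^k(\,\cdot\,))\bigr\|_{L^\infty(M^k)}\le 2\diam(M)\,W_2(\pi_k,\nu).
$$

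Finally I would invoke the fact (already used in the proof of Lemma \ref{EHnLemma}; see \cite[\S2]{AmbGig}, \cite{Villani:oldandnew}) that on the compact manifold $M$ the $2$-Wasserstein distance metrizes the weak topology. Since by hypothesis $\pi_k=\delta^k(p_1,\ldots,p_k)\to\nu$ weakly, it follows that $W_2(\pi_k,\nu)\to 0$ as $k\to\infty$, and the displayed bound then yields the claim. There is no serious obstacle here: the argument is just the triangle inequality, the compactness (diameter) bound, and metrization of weak convergence. The only point that needs care is to work throughout with the honest metric $W_2$ associated with the squared geodesic distance — for which the triangle inequality holds — which is precisely why the statement is phrased for a compact manifold with its geodesic distance rather than for the bilinear cost used in the Euclidean setting.
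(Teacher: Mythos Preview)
Your proof is correct and follows essentially the same route as the paper: apply the triangle inequality for $W_2$, factor the difference of squares, bound the resulting sum by a diameter constant, and then use that $W_2$ metrizes weak convergence on a compact base. The only cosmetic difference is that you bound $W_2(\pi_k,\mu_{\vec x})+W_2(\nu,\mu_{\vec x})\le 2\diam(M)$ directly from the trivial estimate $W_2\le\diam(M)$, whereas the paper first rewrites this sum via the empirical distance before invoking the diameter bound.
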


\begin{proof}
Wasserstein distance is a distance function (see \cite{Villani,Villani:oldandnew}), hence,
$$
\big|W_2\big(\delta^k (p_1,\ldots , p_k),\delta^k(x_1,\ldots, x_k)\big)
-W_2\big(\nu,\delta^k(x_1,\ldots, x_k)\big)
\big|
\le
W_2\big(\delta^k (p_1,\ldots , p_k),\nu\big),
$$
with the right-hand side independent of $x_1,\ldots, x_k$.
On a compact manifold weak convergence implies convergence in the Wasserstein distance, hence the right-hand side converges to zero as $k$ tends to infinity.
Finally,
\begin{equation*}
\begin{aligned}
\label{}
&\big|W^2_2\big(\delta^k (p_1,\ldots , p_k),\delta^k(x_1,\ldots, x_k)\big)
-W^2_2\big(\nu,\delta^k(x_1,\ldots, x_k)\big)
\big|
\cr
&\qq
\le\big(W_2(\delta^k (p_1,\ldots , p_k),\delta^k(x_1,\ldots, x_k))
+W_2\big(\nu,\delta^k(x_1,\ldots, x_k)\big)
\big)
W_2\big(\delta^k (p_1,\ldots , p_k),\nu\big)
\cr
&\qq
\le
\big( 
2W_2\big(\delta^k (p_1,\ldots , p_k),\delta^k(x_1,\ldots, x_k)\big)+o(1)\big)
W_2(\delta^k (p_1,\ldots , p_k),\nu)
\cr
&\qq
\le
\big( 
2\frac1k\sum d(x_i,p_i)^2+o(1)\big)
W_2\big(\delta^k (p_1,\ldots , p_k),\nu\big)
\cr
&\qq
\le
\big( 
C(M)+o(1)\big)
W_2\big(\delta^k (p_1,\ldots , p_k),\nu\big)
,
\end{aligned}
\end{equation*}
since compactness implies the diameter is bounded. This concludes the proof.\end{proof}

\begin{claim}
\label{fsigmaclaim}
Let $F:S_{n^d}\ra(0, \infty )$. Then
$$
\frac 1{n^{d+1}}\log \sup_{\sigma}F(\sigma)
=
\frac 1{n^{d+1}}\log \sum_{\sigma}F(\sigma)+o(1).
$$
\end{claim}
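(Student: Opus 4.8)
The plan is a soft squeeze argument: bound $\sum_{\sigma}F(\sigma)$ between $\sup_{\sigma}F(\sigma)$ and $(n^d)!\cdot\sup_{\sigma}F(\sigma)$, take logarithms, and observe that the combinatorial prefactor $(n^d)!$ contributes nothing once divided by $n^{d+1}$. First I would note that, since $F>0$ on the finite set $S_{n^d}$, which has cardinality $(n^d)!$, the elementary two-sided bound
$$
\sup_{\sigma}F(\sigma)\le \sum_{\sigma}F(\sigma)\le (n^d)!\,\sup_{\sigma}F(\sigma)
$$
holds. Applying the increasing map $t\mapsto \frac1{n^{d+1}}\log t$ then yields
$$
\frac1{n^{d+1}}\log\sup_{\sigma}F(\sigma)
\le
\frac1{n^{d+1}}\log\sum_{\sigma}F(\sigma)
\le
\frac1{n^{d+1}}\log(n^d)!+\frac1{n^{d+1}}\log\sup_{\sigma}F(\sigma).
$$

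The second step is to dispose of the error term. Using the crude bound $(n^d)!\le (n^d)^{n^d}$ (Stirling's formula would serve equally well),
$$
0\le \frac1{n^{d+1}}\log(n^d)!\le \frac{n^d\log (n^d)}{n^{d+1}}=\frac{d\log n}{n}\longrightarrow 0
\qq\h{as } n\ra\infty.
$$
Combining the two displays, the quantity $\frac1{n^{d+1}}\log\sum_{\sigma}F(\sigma)-\frac1{n^{d+1}}\log\sup_{\sigma}F(\sigma)$ lies in $[0,\tfrac{d\log n}{n}]$, hence is $o(1)$, which is exactly the assertion.

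There is essentially no obstacle here; the only point worth flagging is the role of the normalization $n^{d+1}$. Had one normalized instead by $n^d$, the term $\frac1{n^d}\log(n^d)!\sim d\log n$ would diverge, and the supremum and the sum would no longer share the same exponential rate. The extra power of $n$ — which enters through the factor $\frac1n$ in front of the logarithm in the definitions \eqref{hnpereq} and \eqref{hntropeq} of $H_n$ — is precisely what makes the permanent $\per\Phi$ and the semi-tropical permanent $\tsper\Phi$ asymptotically interchangeable, and hence what allows Lemma \ref{HnWLemma} to be proved uniformly in both cases.
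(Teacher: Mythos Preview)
Your proof is correct and follows essentially the same squeeze argument as the paper: both bound the sum between the supremum and $(n^d)!$ times the supremum and then kill the combinatorial factor after dividing by $n^{d+1}$. If anything, your write-up is slightly cleaner --- the paper writes $n^d$ where $(n^d)!$ is meant and appeals to Stirling, whereas your crude bound $(n^d)!\le (n^d)^{n^d}$ makes the estimate $\frac{d\log n}{n}\to 0$ explicit; your closing remark on the necessity of the $n^{d+1}$ normalization is also a nice addition.
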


\begin{proof}
Of course,
$$
\frac 1{n^{d+1}}\log \sup_{\sigma}F(\sigma)
\le
\frac 1{n^{d+1}}\log \sum_{\sigma}F(\sigma).
$$
Conversely,
$$
\frac 1{n^{d+1}}\log \sum_{\sigma}F(\sigma)
\le
\frac 1{n^{d+1}}\log n^d\sup_{\sigma}F(\sigma).
$$
and by Stirling $\frac 1{n^{d+1}}\log n^d=o(1)$.
\end{proof}

\begin{claim}
\label{phiclaim}
$-\frac1n\log\phi^{(n)}_i(x)=d(p_i,x)^2+o(1)$.
\end{claim}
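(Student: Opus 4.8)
The plan is to sandwich $\phi^{(n)}_i(x)$ between $e^{-n\,d(p_i,x)^2}$ and a fixed multiple of it, with the multiplicative constant independent of $n$, $i$, and $x$; applying $-\tfrac1n\log$ then gives the claim with error $O(1/n)$, which is certainly $o(1)$.

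First I would unwind the notation. On $\TT=\RR^d/\ZZ^d$ the distance is $d(p_i,x)=\min_{m\in\ZZ^d}|x-p_i-m|=:r$. Pick $m_0$ realizing this minimum and set $v:=x-p_i-m_0$, so that $|v|=r$ and, by minimality, $|v|\le|v-k|$ for every $k\in\ZZ^d$; in particular $r\le\sqrt d/2$ since $v$ may be taken in the fundamental cube. Reindexing the defining sum by $k=m-m_0$ gives $\phi^{(n)}_i(x)=\sum_{k\in\ZZ^d}e^{-n|v-k|^2}$. Retaining only the $k=0$ term yields the lower bound $\phi^{(n)}_i(x)\ge e^{-nr^2}$, hence $-\tfrac1n\log\phi^{(n)}_i(x)\le r^2$.

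For the upper bound I would factor out the dominant term and write
$$\phi^{(n)}_i(x)=e^{-nr^2}\sum_{k\in\ZZ^d}e^{-n(|v-k|^2-r^2)}.$$
Since $|v-k|^2-r^2\ge0$ for all $k$, for $n\ge1$ each summand is at most $e^{-(|v-k|^2-r^2)}$, so the sum is bounded by $e^{r^2}\sum_{k}e^{-|v-k|^2}\le e^{d/4}\sum_k e^{-\max(0,|k|-\sqrt d/2)^2}=:C<\infty$, a convergent series independent of $v$ (hence of $i$ and $x$). Thus $\phi^{(n)}_i(x)\le C\,e^{-nr^2}$, giving $-\tfrac1n\log\phi^{(n)}_i(x)\ge r^2-\tfrac{\log C}{n}$. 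Combining the two bounds, $r^2-\tfrac{\log C}{n}\le-\tfrac1n\log\phi^{(n)}_i(x)\le r^2$, so $-\tfrac1n\log\phi^{(n)}_i(x)=d(p_i,x)^2+O(1/n)$ uniformly in $i$ and $x$.

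The only point requiring any care is the uniform summability of the tail $\sum_k e^{-|v-k|^2}$; this is where I would invoke the crude comparison $|v-k|\ge|k|-|v|\ge|k|-\sqrt d/2$, valid because $v$ lies in the fundamental cube, which makes the tail dominated by a fixed convergent series. Everything else is the elementary observation that the minimal lattice translate dominates the theta sum up to a bounded factor. This uniform $O(1/n)$ estimate is exactly what is needed to feed Claims \ref{fsigmaclaim} and \ref{w2claim} into the proof of Lemma \ref{HnWLemma}.
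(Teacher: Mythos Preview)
Your proof is correct and follows the same overall sandwich strategy as the paper: the lower bound comes from retaining the single dominant term $e^{-nr^2}$, and the upper bound amounts to showing the full theta sum is at most a fixed constant times that term. The difference lies in how the upper bound is obtained. The paper truncates the lattice sum to a large ball $B_R(0)$, argues the tail is negligible, and then bounds the finite sum by its cardinality times the maximum; this works but the quantifier order (choosing $\epsilon$, $R$, $C$ uniformly in $n$) is left somewhat implicit. You instead exploit monotonicity in $n$: since each exponent $|v-k|^2-r^2$ is nonnegative, replacing $n$ by $1$ only increases the sum, reducing everything to a single fixed convergent series independent of $n$, $i$, $x$. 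This is cleaner, makes the uniformity transparent, and yields the explicit rate $O(1/n)$ rather than merely $o(1)$---which is precisely the uniformity needed downstream in Lemma~\ref{HnWLemma}.
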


\begin{proof}
By definition,
$$
d(p_i,x)^2=\inf_{m\in\ZZ^d} |x-p_i-m|^2.
$$
Now, of course,
$$
\frac1n\log\phi^{(n)}_i(x)=
\frac1n\log\sum_{m\in\ZZ^d} e^{-n|x-p_i-m|^2}
\ge
\frac1n\log\sup_{m\in\ZZ^d} e^{-n|x-p_i-m|^2}
=-d(p_i,x)^2.
$$
Conversely, for every $\epsilon > 0 $ there exists $C, R >0$ 
such that 
$$\sum_{m\in\ZZ^d} e^{-n|x-p_i-m|^2}\le 
\sum_{m\in B_R(0)\cap \ZZ^d} e^{-n|x-p_i-m|^2}+\eps
\le C\sum_{m\in B_R(0)\cap \ZZ^d} e^{-n|x-p_i-m|^2}.
$$
Assuming, without loss of generality, that
$\sup_{m\in\ZZ^d} e^{-n|x-p_i-m|^2}$ is obtained in $B_R(0)$, we have
$$
\sum_{m\in B_R(0)\cap \ZZ^d} e^{-n|x-p_i-m|^2}\le 
CR^d\sup_{m\in\ZZ^d} e^{-n|x-p_i-m|^2},
$$
so
\begin{equation*}
\begin{aligned}
\label{}
\frac1n\log\phi^{(n)}_i(x)
&=
\frac1n\log\sum_{m\in\ZZ^d} e^{-n|x-p_i-m|^2}
\cr
&\le
\frac1n\log C +\frac1n\log\sup_{m\in B_R(0)\cap \ZZ^d} e^{-n|x-p_i-m|^2}
\cr
&\le
o(1)+\frac1n\log CR^d\sup_{m\in\ZZ^d} e^{-n|x-p_i-m|^2}
\cr
&=o(1)+
\frac1n\log\sup_{m\in\ZZ^d} e^{-n|x-p_i-m|^2}
\cr
&=o(1)-d(p_i,x)^2,
\end{aligned}
\end{equation*}
where $o(1)$ depends on $R$, but goes to zero as $n $ tends to infinity (for $R$ fixed). Letting $n $ tend to infinity concludes the proof.
\end{proof}

Finally, we obtain the following theorem due to Berman \cite[Theorem 1.1]{Berm}
and Hultgren \cite[Theorem 3.2]{Hult}.

\begin{theorem}
\label{hultthm}
For both \eqref{hnpereq}
and \eqref{hntropeq}, LDP$(\Gamma_{\be,n},n^d)$ with rate function $G_{\be,\mu_0,dx}$.
The set $G_{\be,\mu_0,dx}^{-1}(0)$ is a singleton precisely when  \eqref{maeq} has a unique solution.
\end{theorem}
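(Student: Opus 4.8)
The plan is to read the theorem off the machinery already assembled. For the large deviation principle I would apply Lemma~\ref{EHnLemma} with $E(\mu):=W_2^2(\mu,dx)$. This $E$ is continuous on $P(X)$: since $X$ is compact, $W_2$ is a genuine metric metrizing the weak topology, so $\mu\mapsto W_2^2(\mu,dx)$ is continuous. Lemma~\ref{HnWLemma} is precisely the statement that, for both \eqref{hnpereq} and \eqref{hntropeq}, the hypothesis $\lim_{n\to\infty}\|H_n/n^d-E\circ\delta^{n^d}\|_{L^\infty}=0$ of Lemma~\ref{EHnLemma} holds (after matching the convention under which ``$n$'' here plays the role of ``$n^d$'' there). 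Lemma~\ref{EHnLemma} then yields LDP$(\Gamma_{\be,n},n^d)$ with rate function $\be E+\Ent(\mu_0,\,\cdot\,)+C_{\be}$; comparing with the definition \eqref{geq} of $G_{\be,\mu_0,dx}$ and using Claim~\ref{Cbenclaim} to evaluate $C_\be$, this rate function is exactly $G_{\be,\mu_0,dx}$. That disposes of the first assertion.

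For the second assertion, note first that $G:=G_{\be,\mu_0,dx}$ is a nonnegative rate function with $\inf G=0$ (Remark~\ref{zeroremark}; indeed $C_\be$ was chosen for this), so $G^{-1}(0)$ is exactly the set of minimizers of $G$, and it is nonempty because $P(X)$ is compact and $G$ is lower semicontinuous. By Proposition~\ref{gprop}, $G$ has a unique minimizer if and only if $F:=F_{\be,\mu_0,dx}$ has a unique (up to an additive constant) minimizer $\phi_{\min}$, and then that minimizer of $G$ equals $\MA_{dx}\phi_{\min}$. It thus remains to identify the minimizers of $F$ with the solutions of \eqref{maeq} for $\nu=dx$. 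By Lemma~\ref{fminlemma}, $F$ is Gateaux differentiable with $dF|_\phi=e^{\be\phi}\mu_0\big/\int_X e^{\be\phi}\mu_0-\MA_{dx}\phi$; since $C(X)$ is a vector space, any minimizer of $F$ is a critical point, hence a solution of \eqref{maeq}, and conversely any solution of \eqref{maeq} is a critical point of $F$. When $\be\ge 0$ the functional $F$ is convex, so critical points and minimizers coincide; the case $\be=0$ may also be treated directly via Sanov's theorem as in the remark after Proposition~\ref{gprop}, and for $\be<0$ the equivalence of ``unique minimizer of $F$'' with ``unique solution of \eqref{maeq}'' still holds, by the argument in the proof of Proposition~\ref{gprop}, which only needs the criticality relation \eqref{fminlemmaeq} of the competing potential. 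Finally, along the solution set of \eqref{maeq} the map $\phi\mapsto\MA_{dx}\phi=e^{\be\phi}\mu_0\big/\int_X e^{\be\phi}\mu_0$ recovers $\phi$ up to an additive constant (take logarithms), so it is a bijection, modulo constants, of that solution set onto $G^{-1}(0)$; since \eqref{maeq} is invariant under $f\mapsto f+c$, ``unique solution'' already means ``unique modulo constants.'' Hence $G^{-1}(0)$ is a singleton exactly when \eqref{maeq} has a unique solution.

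This theorem carries no essential difficulty of its own: all the analytic content lives in Lemmas~\ref{EHnLemma} and \ref{HnWLemma} and in Propositions~\ref{ldpprop} and \ref{gprop}. The two places that genuinely need care are the bookkeeping --- lining up the normalization and index conventions so that Lemma~\ref{EHnLemma} applies verbatim, and checking that the additive constant it produces is the $C$ of \eqref{geq} --- and, in the zero-set statement, making the correspondence between $G^{-1}(0)$ and the solution set of \eqref{maeq} a genuine bijection; in particular one must relate the Monge--Amp\`ere equation to the \emph{critical points} of $F_{\be,\mu_0,dx}$ rather than merely to its minimizers, since $F_{\be,\mu_0,dx}$ need not be convex when $\be<0$. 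That last point is the most delicate part of the argument.
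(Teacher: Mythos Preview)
Your proposal is correct and follows exactly the paper's own proof, which is the terse ``The first statement follows from Lemmas~\ref{EHnLemma} and~\ref{HnWLemma}. The second statement follows from Proposition~\ref{gprop}.'' You simply spell out the details the paper leaves implicit --- in particular the passage from ``unique minimizer of $F_{\be,\mu_0,dx}$'' (what Proposition~\ref{gprop} literally gives) to ``unique solution of \eqref{maeq}'' via Lemma~\ref{fminlemma}, and the care needed when $\be<0$ --- which is a genuine clarification rather than a different route.
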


\begin{proof}
The first statement follows from Lemmas \ref{EHnLemma} and \ref{HnWLemma}.
The second statement follows from Proposition \ref{gprop}. 
\end{proof}

\subsection{An alternative proof---zero temperature approach}
\label{AltProofSubSec}

We were lucky enough to find finite-dimensional approximations to the main functional we were interested in (for all $\beta$ at once). Here is an alternative approach, due to Berman in the permanental/Monge--Amp\`ere setting, which allows to `reverse engineer' the main functional by computing the limit of the moment generating functions when $\beta=\beta_n=n\rightarrow \infty$. 
This is an easier task because in this ``zero-temprature limit'' the entropy contribution disappears.
This is a standard method in the field and its benefit in this setting is that when writing out the moment generating functions explicitly, the symmetry in the Hamiltonians can be exploited to reduce much of the complexity. Once an explicit formula for the limit is attained \cite[Proposition 5.3]{Berm}, 
\cite[Lemma 3.8]{Hult}, the G\"artner--Ellis Theorem can be invoked to deduce an LDP for this ``zero-temperature" case (stated as a part of Theorem 1.1 in Berman's paper \cite{Berm} and as Theorem 3.6 in Hultgren's article \cite{Hult}). This LDP can then be used to deduce Lemma 8.4 above (corresponding to Lemma 4.9 in \cite{Berm} and Lemma 3.14 in \cite{Hult}), after which Theorem 8.8 is proved as in the previous section. 
This original approach of Berman and Hultgren to proving Theorem \ref{hultthm} also can be made to work in the case we are no longer on the torus, but rather on a non-compact manifold, as in the toric setting. The reason we chose the proof 
we presented above is that we found it slightly more pedagogical to directly deal with all $\beta$ at once and avoid this extra use of the Gartner--Ellis theorem.

We will now explain the main part of the alternative argument for 
Theorem \ref{hultthm}, that as just mentioned, was the original proof. 
Namely, how to prove the LDP when $\beta=\beta_k=k\rightarrow \infty$. Here is the main observation:

\begin{lemma}
\label{ldpHnLemma}

Let $H_{n}:X^{n^d}\ra\RR$ be given by \eqref{hnpereq}
or \eqref{hntropeq}.
Set 
$$
\Gamma_{n,n}:=\delta^{n^d}_\#\Big( e^{-nH_{n}} \mu_0^{\otimes {n^d}}\Big)
\Big/Z_{n,n}\in P(P(X)).
$$
Then LDP$(\Gamma_{n,n},n^{d+1})$ with rate function $W_2^2(dx,\,\cdot\,)$.
\end{lemma}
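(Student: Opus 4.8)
The plan is to apply the G\"artner--Ellis Theorem (Theorem \ref{GEThm}) to the sequence $\{(\Gamma_{n,n},n^{d+1})\}$ on $\calX=P(\TT)$; this is legitimate because $\TT$ is compact, so $P(\TT)$ (metrized by a $p$-Wasserstein distance) is compact and exponential tightness is automatic. It therefore suffices to compute the moment generating function
$$
p(\theta):=\lim_n\frac1{n^{d+1}}\log\int_{P(\TT)}e^{n^{d+1}\langle\theta,\nu\rangle}\,\Gamma_{n,n}(\nu),\qquad \theta\in C^0(\TT),
$$
to check that $p$ is finite and Gateaux differentiable, and to identify $p^\star$. As a preliminary reduction, since $\phi^{(n)}_i>0$ we have $\tsper\Phi\le\per\Phi\le (n^d)!\,\tsper\Phi$ pointwise, and $\frac1{n^{d+1}}\log(n^d)!=o(1)$ by Stirling, so \eqref{hnpereq} and \eqref{hntropeq} yield the same $p(\theta)$ (and the same $\lim_n\frac1{n^{d+1}}\log Z_{n,n}$); I may thus assume $e^{-nH_n}=\per\Phi$.

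The point is that the symmetry of the permanent now lets the computation factorize, as in Sanov's theorem. Pulling back under $\delta^{n^d}_\#$ and using $\langle\theta,\delta^{n^d}(x_1,\ldots,x_{n^d})\rangle=\frac1{n^d}\sum_j\theta(x_j)$, the integral against $\Gamma_{n,n}$ is $Z_{n,n}^{-1}\int_{\TT^{n^d}}e^{n\sum_j\theta(x_j)}\,\per\Phi(x)\,\mu_0^{\otimes n^d}(x)$. Expanding $\per\Phi$ over $S_{n^d}$ and using that $\mu_0^{\otimes n^d}$ is a product measure and that each $\sigma$ is a bijection, every one of the $(n^d)!$ summands integrates to the same quantity, so the integral equals $(n^d)!\,\prod_{i=1}^{n^d}\int_{\TT}e^{n\theta(x)}\phi^{(n)}_i(x)\,\mu_0(x)$, and likewise $Z_{n,n}=(n^d)!\,\prod_{i}\int_{\TT}\phi^{(n)}_i(x)\,\mu_0(x)$. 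The factorials cancel exactly, leaving
$$
\frac1{n^{d+1}}\log\int_{P(\TT)}e^{n^{d+1}\langle\theta,\nu\rangle}\,\Gamma_{n,n}(\nu)
=\frac1{n^d}\sum_{i=1}^{n^d}\frac1n\log\frac{\int_{\TT}e^{n\theta(x)}\phi^{(n)}_i(x)\,\mu_0(x)}{\int_{\TT}\phi^{(n)}_i(x)\,\mu_0(x)}.
$$
By Claim \ref{phiclaim}, $-\frac1n\log\phi^{(n)}_i\to d(p_i,\,\cdot\,)^2$ uniformly in $i$ (the uniformity being clear from translation invariance on $\TT$), so a Laplace estimate gives $\frac1n\log\int_{\TT}e^{n\theta}\phi^{(n)}_i\mu_0\to\sup_x[\theta(x)-d(p_i,x)^2]$ and $\frac1n\log\int_{\TT}\phi^{(n)}_i\mu_0\to 0$; for the lower Laplace bound I use that $\mu_0$ may be taken with full support (say $\mu_0=dx$---the choice is immaterial, as $\mu_0$ drops out of the answer). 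Since the lattice points $p_i$ equidistribute and $p\mapsto\sup_x[\theta(x)-d(p,x)^2]$ is continuous, the Riemann sum converges to
$$
p(\theta)=\int_{\TT}\sup_x\big[\theta(x)-d(p,x)^2\big]\,dp=-\int_{\TT}\theta^c\,dp,
$$
where $\theta^c(p):=\inf_x[d(x,p)^2-\theta(x)]$ is the $c$-transform for the cost $c=d^2$ (this is the explicit limit formula of Berman and Hultgren, cf.\ \cite[Proposition 5.3]{Berm}, \cite[Lemma 3.8]{Hult}).

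It remains to verify the G\"artner--Ellis hypotheses and identify $p^\star$. Finiteness of $p$ is immediate since $\theta$ and $d^2$ are bounded on the compact torus, and $p$ is convex because $\theta\mapsto-\theta^c$ is a supremum of affine functionals. For Gateaux differentiability, fix $\theta,\eta$: by the envelope theorem, for Lebesgue-a.e.\ $p\in\TT$ the minimizer $x^*(p)$ of $x\mapsto d(x,p)^2-\theta(x)$ is unique, and then $\lim_{t\to 0}\frac1t\big(p(\theta+t\eta)-p(\theta)\big)=\int_{\TT}\eta(x^*(p))\,dp=\langle\eta,(x^*)_\#dx\rangle$, which is linear in $\eta$; hence $dp|_\theta=(x^*)_\#dx$, the torus counterpart of $-\MA_\nu\theta$ in \eqref{djeq}, by the same mechanism as the differentiability of $J_\nu$ in Theorem \ref{JW2Thm}. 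Finally $p^\star(\mu)=\sup_\theta\big[\langle\theta,\mu\rangle+\int_{\TT}\theta^c\,dp\big]$, which by Kantorovich duality for the cost $c=d^2$ is the optimal transport cost between $\mu$ and the uniform measure $dx$, i.e.\ $W_2^2(\mu,dx)=W_2^2(dx,\mu)$ by symmetry of $W_2$. Therefore G\"artner--Ellis yields LDP$(\Gamma_{n,n},n^{d+1})$ with rate function $W_2^2(dx,\,\cdot\,)$, as claimed (consistently, $\inf_\mu W_2^2(dx,\mu)=0$ is attained at $dx$, matching $\inf p^\star=-p(0)=0$).

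I expect the main obstacle to be the Gateaux differentiability of $p$: it rests on the a.e.\ uniqueness of the optimizer $x^*(p)$, i.e.\ on regularity of optimal transport out of an absolutely continuous measure on $\TT$ (semiconcavity of $\theta^c$, the torus analogue of Brenier--McCann theory), which---unlike the squared-inner-product cost treated explicitly in \S\ref{OTSec}---must be imported from the general theory. A secondary technical point is making the Laplace asymptotics uniform in $i$ so that the limit commutes with the average $\frac1{n^d}\sum_i$; this is routine given Claim \ref{phiclaim} and translation invariance.
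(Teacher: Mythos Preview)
Your proof is correct and follows essentially the same route as the paper: G\"artner--Ellis on the compact space $P(\TT)$, the permanent-symmetry factorization of the integral, the Laplace/Claim~\ref{phiclaim} asymptotics to identify $p(\theta)$ as $-\int\theta^c\,dx$, and Legendre duality with $W_2^2(dx,\,\cdot\,)$. The only cosmetic differences are that the paper handles the normalization via a separate Claim~\ref{Znnclaim} (whereas you let the $(n^d)!$'s and the $Z_{n,n}$-denominator cancel in the ratio, which is slightly cleaner), and the paper obtains Gateaux differentiability by citing Theorem~\ref{JW2Thm} rather than rerunning the envelope/a.e.-uniqueness argument you sketch; your explicit treatment of the semi-tropical case via $\tsper\le\per\le(n^d)!\,\tsper$ is a nice touch that the paper leaves implicit.
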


\begin{proof}
This time, we can use Theorem \ref{GEThm}. 
By Claim \ref{Znnclaim} below, $\lim\frac 1{n^{d+1}}
\log Z_{n,n}=0$. Thus, assuming  \eqref{hnpereq}, the moment generating function 
simplifies as follows,
\begin{equation*}
\begin{aligned}
\label{}
p (\theta) 
&=
\lim\frac 1{n^{d+1}}
\log\int^{}_{P(X)}e^{n^{d+1} \langle \theta,\nu \rangle }\Gamma_{n,n}(\nu)  
\cr
&=
\lim\frac 1{n^{d+1}}
\log\int^{}_{X^{n^d}}e^{n^{d+1} \langle \theta,\delta^{n^d}(\,\cdot\,) \rangle }
\Big(e^{-nH_n}\mu_0^{\otimes n^{d}}\Big)
\cr
&=
\lim\frac 1{n^{d+1}}
\log\int^{}_{X^{n^d}}e^{n^{d+1} \langle \theta,\delta^{n^d}(x_1,\ldots,x_{n^d})\rangle }
e^{-n H_{n}(x_1,\ldots,x_{n^d})}\mu_0(x_1)\otimes\cdots\mu_0(x_{n^d})
\cr
&=
\lim\frac 1{n^{d+1}}
\log\int^{}_{X^{n^d}}e^{n^{d+1} n^{-d}\sum_{i = 1}^{n^d}\theta(x_i)}
e^{-n H_{n}(x_1,\ldots,x_{n^d})}\mu_0(x_1)\otimes\cdots\mu_0(x_{n^d})
\cr
&=
\lim\frac 1{n^{d+1}}
\log\int^{}_{X^{n^d}}e^{n\sum_{i = 1}^{n^d}\theta(x_i)}
\sum_\sigma\prod \phi^{(n)}_i(x_{\sigma(i)})\mu_0(x_1)\otimes\cdots\mu_0(x_{n^d})
\cr
&=
\lim\frac 1{n^{d+1}}
\log\int^{}_{X^{n^d}}
\sum_\sigma\prod \Big[ e^{n\theta(x_{\sigma(i)})} \phi^{(n)}_i(x_{\sigma(i)})\Big]\mu_0(x_1)\otimes\cdots\mu_0(x_{n^d})
\cr
&=
\lim\frac 1{n^{d+1}}
\log\int^{}_{X^{n^d}}
n^d!\prod \Big[ e^{n\theta(x_{i})} \phi^{(n)}_i(x_i)\Big]\mu_0(x_1)\otimes\cdots\mu_0(x_{n^d})
\cr
&=
\lim\frac 1{n^{d+1}}
\log\int^{}_{X^{n^d}}
\prod \Big[ e^{n\theta(x_{i})} \phi^{(n)}_i(x_i)\Big]\mu_0(x_1)\otimes\cdots\mu_0(x_{n^d})
\cr
&=
\lim\frac 1{n^{d+1}}
\log
\prod\Big[
\int^{}_{X} e^{n\theta} \phi^{(n)}_i\mu_0\Big]
\cr
&=
\lim\frac 1{n^{d}}
\sum_{i=1}^{n^d}
\frac1 n\log\Big[
\int^{}_{X} e^{n\theta} \phi^{(n)}_i\mu_0\Big].
\end{aligned}
\end{equation*}
By Claim \ref{phiclaim}, 
$-\frac1n\log\phi^{(n)}_i(x)=d(p_i,x)^2+o(1)$.
and by Claim \ref{Legasympclaim} below we thus have
\begin{equation*}
\begin{aligned}
\label{}
p (\theta) 
&=
\lim\frac 1{n^{d}}
\sum_{i=1}^{n^d}\big[
(-\th)^\star(p_i)+o(1)\big]=
\lim\big\langle \delta^{n^d} (p_1,\ldots , p_{n^d}),(-\th)^\star\big\rangle
=\int_X((-\th)^\star dx,
\end{aligned}
\end{equation*}
since $\delta^k (p_1,\ldots , p_k)\ra dx$.
Thus, by Theorems \ref{GEThm} and \ref{JW2Thm} we are done.
\end{proof}

\begin{claim}
\label{Znnclaim}
$\lim\frac 1{n^{d+1}}
\log Z_{n,n}=0$.
\end{claim}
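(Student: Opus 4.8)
The plan is to prove the claim for both Hamiltonians \eqref{hnpereq} and \eqref{hntropeq} at once, by feeding the uniform approximation of Lemma \ref{HnWLemma} into a two-sided estimate of $\frac1{n^{d+1}}\log Z_{n,n}$ modeled on the proof of Claim \ref{Cbenclaim}, but now carried out at the scale $n^{d+1}$ rather than $n^d$. Set $\epsilon_n:=\Vert H_{n}/n^d-W_2^2\big(dx,\delta^{n^d}(\,\cdot\,)\big)\Vert_{L^\infty(X^{n^d})}$, so that $\epsilon_n\to 0$ by Lemma \ref{HnWLemma}, and on all of $X^{n^d}$,
$$
n^d W_2^2\big(dx,\delta^{n^d}(\,\cdot\,)\big)-n^d\epsilon_n\ \le\ H_{n}\ \le\ n^d W_2^2\big(dx,\delta^{n^d}(\,\cdot\,)\big)+n^d\epsilon_n .
$$
Since $W_2^2(dx,\,\cdot\,)\ge 0$ with infimum $0$ (attained at $dx$), the weight $e^{-nH_n}$ is harmless at this scale. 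Indeed the left inequality gives $e^{-nH_n}\le e^{n^{d+1}\epsilon_n}$ pointwise, hence $Z_{n,n}=\int_{X^{n^d}}e^{-nH_n}\,\mu_0^{\otimes n^d}\le e^{n^{d+1}\epsilon_n}$ ($\mu_0^{\otimes n^d}$ being a probability measure), so $\limsup_n\frac1{n^{d+1}}\log Z_{n,n}\le 0$.

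For the lower bound, which is the real point, fix $e>0$ and restrict the (nonnegative) integrand to the open set $(\delta^{n^d})^{-1}(B_e(dx))\subset X^{n^d}$, where $B_e(dx)\subset P(X)$ is the $W_2$-ball. There $W_2^2\big(dx,\delta^{n^d}(\,\cdot\,)\big)<e^2$, so the right inequality above gives $e^{-nH_n}>e^{-n^{d+1}(e^2+\epsilon_n)}$ and therefore
$$
\frac1{n^{d+1}}\log Z_{n,n}\ \ge\ -e^2-\epsilon_n+\frac1n\cdot\frac1{n^d}\log\mu_0^{\otimes n^d}\big((\delta^{n^d})^{-1}(B_e(dx))\big).
$$
Now $(\delta^{n^d})_\#\mu_0^{\otimes n^d}$ is precisely the measure of Sanov's theorem (Corollary \ref{SanovCor}), which satisfies an LDP with normalization $n^d$ and rate $\Ent(\mu_0,\,\cdot\,)$; its lower bound gives $\liminf_n\frac1{n^d}\log\mu_0^{\otimes n^d}\big((\delta^{n^d})^{-1}(B_e(dx))\big)\ge-\inf_{B_e(dx)}\Ent(\mu_0,\,\cdot\,)\ge-\Ent(\mu_0,dx)$, which is $>-\infty$ as soon as $\Ent(\mu_0,dx)<\infty$ (true under the regularity assumed on $\mu_0$, e.g. if it has a continuous density bounded below by a positive constant); this also guarantees that the open set in question, which for $n$ large contains the uniformly distributed configuration $(p_1,\ldots,p_{n^d})$, has positive mass. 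Thus $\frac1{n^d}\log\mu_0^{\otimes n^d}\big((\delta^{n^d})^{-1}(B_e(dx))\big)$ is bounded, so multiplying by $\frac1n$ sends it to $0$; hence $\liminf_n\frac1{n^{d+1}}\log Z_{n,n}\ge-e^2$, and letting $e\to 0$ and combining with the upper bound yields $\lim_n\frac1{n^{d+1}}\log Z_{n,n}=0$.

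The step I expect to be the main obstacle is exactly this lower bound: it is the only place where something about $\mu_0$ beyond being a probability measure enters, and the argument above makes clear that the operative hypothesis is $\Ent(\mu_0,dx)<\infty$. As a cross-check for the permanental Hamiltonian \eqref{hnpereq} one can bypass Lemma \ref{HnWLemma}: the symmetrization used in the proof of Lemma \ref{ldpHnLemma} gives the exact identity $Z_{n,n}=n^d!\,\prod_{i=1}^{n^d}\int_X\phi^{(n)}_i\,\mu_0$, so $\frac1{n^{d+1}}\log Z_{n,n}=\frac{\log(n^d!)}{n^{d+1}}+\frac1{n^d}\sum_{i=1}^{n^d}\frac1n\log\int_X\phi^{(n)}_i\,\mu_0$; Stirling kills the first term, the uniform bound $\phi^{(n)}_i(x)\le\sup_y\sum_{m\in\ZZ^d}e^{-|y-m|^2}<\infty$ (valid since $n\ge 1$) handles the $\limsup$, and a Laplace-type estimate $\int_X\phi^{(n)}_i\,\mu_0\asymp n^{-d/2}$ uniform in $i$ (via Claim \ref{phiclaim}, again using two-sided bounds on the density of $\mu_0$) handles the $\liminf$ --- the delicate uniform-in-$i$ lower estimate being once more the obstacle flagged above.
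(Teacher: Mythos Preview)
Your main argument is correct and, pleasantly, handles both Hamiltonians \eqref{hnpereq} and \eqref{hntropeq} at once: feed the uniform estimate of Lemma~\ref{HnWLemma} into $e^{-nH_n}$, use $W_2^2(dx,\cdot)\ge0$ for the upper bound, and localize near $dx$ together with Sanov's lower bound (Corollary~\ref{SanovCor}) for the lower bound, the extra factor $1/n$ doing the work. The hypothesis $\Ent(\mu_0,dx)<\infty$ you isolate is exactly right and is satisfied under the standing assumption that $\mu_0$ has a continuous positive density on $\TT$.

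The paper's proof is different: it treats only the permanental Hamiltonian and uses the exact factorization $Z_{n,n}=n^d!\prod_i\|\phi^{(n)}_i\|_{L^1(\mu_0)}$ (the very computation you give as a cross-check), reducing the claim to the uniform-in-$i$ Laplace estimate $\frac1n\log\|\phi^{(n)}_i\|_{L^1(\mu_0)}=o(1)$ via Claim~\ref{phiclaim}. Your route is shorter and more general; the paper's is more self-contained.

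There is, however, a structural point you should be aware of. Claim~\ref{Znnclaim} lives in \S\ref{AltProofSubSec}, whose declared purpose is to give an \emph{alternative} proof of Theorem~\ref{hultthm} that first establishes the zero-temperature LDP (Lemma~\ref{ldpHnLemma}) and then \emph{deduces} Lemma~\ref{HnWLemma} from it. In that logical flow, invoking Lemma~\ref{HnWLemma} to prove Claim~\ref{Znnclaim} is circular: you are using the conclusion of \S\ref{AltProofSubSec} to establish its first step. Of course Lemma~\ref{HnWLemma} has an independent proof in \S\ref{FinDimWasSubSec}, so your argument is mathematically sound; but it collapses the ``alternative'' route back onto the main one. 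Your cross-check paragraph---which is essentially the paper's own proof---is the argument that respects the intended logical independence of \S\ref{AltProofSubSec}.
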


In fact, we will give a rate of decay, 
$\frac 1{n^{d+1}}
\log Z_{n,n}=O(1/n)$.

\begin{remark} {\rm
\label{}
It actually suffices to show that $\lim\frac 1{n^{d+1}}
\log Z_{n,n}$ exists---it then must be zero:
by Theorem \ref{GEThm}, once we have a large deviation principle and we know that the rate function is $W_2^2(dx,\,\cdot\,)$ up to a constant, then we can determine that constant by the fact that the infimum of the rate function must be zero
(Remark \ref{zeroremark}). Since
$\inf W_2^2(dx,\,\cdot\,)=0$ (attained for $dx$), we get the constant must be zero.
At any rate, we will prove Claim \ref{Znnclaim} directly. 
} \end{remark}

\begin{remark} {\rm
\label{}
In fact, here is a quick proof: $\lim_n\frac 1{n^{d+1}}
\log Z_{n,n}=p(0)$, which, by the previous computation,
equals $\int 0^\star=0$.
} \end{remark}

\begin{proof}
We compute,
\begin{equation*}
\begin{aligned}
\label{}
\lim\frac 1{n^{d+1}}
\log Z_{n,n}
&=
\lim\frac 1{n^{d+1}}
\log\int^{}_{X^{n^d}}\per[\phi^{(n)}_i(x_j)]\mu_0^{\otimes n^{d}}
\cr
&=
\lim\frac 1{n^{d+1}}
\log\int^{}_{X^{n^d}}
\sum_\sigma\prod_{i=1}^{n^d}  \phi^{(n)}_i(x_{\sigma(i)})\mu_0(x_1)\otimes\cdots\mu_0(x_{n^d})
\cr
&=
\lim\frac 1{n^{d+1}}
\log\int^{}_{X^{n^d}}
\sum_\sigma\prod_{i=1}^{n^d}  \phi^{(n)}_i(x_i)\mu_0(x_1)\otimes\cdots\mu_0(x_{n^d})
\cr
&=
\lim\frac 1{n^{d+1}}
\log\int^{}_{X^{n^d}}
n^d!\prod   \phi^{(n)}_i(x_i)\mu_0(x_1)\otimes\cdots\mu_0(x_{n^d})
\cr
&=
\lim\frac 1{n^{d+1}}
\log\int^{}_{X^{n^d}}
\prod   \phi^{(n)}_i(x_i)\mu_0(x_1)\otimes\cdots\mu_0(x_{n^d})
\cr
&=
\lim\frac 1{n^{d+1}}
\log
\prod
\int^{}_{X}  \phi^{(n)}_i\mu_0
\cr
&=
\lim\frac 1{n^{d}}
\sum_{i = 1}^ {n^d}\frac 1n\log||\phi^{(n)}_i||_{L^1(\mu_0)}
\le C_n,
\end{aligned}
\end{equation*}
where $C_n:=\sup_{i=1,\ldots n^d} \frac 1n\log||\phi^{(n)}_i||_{L^1(\mu_0)}$.
Now, it remains to estimate $C_n$.
By Claim \ref{phiclaim}, 
$-\frac1n\log\phi^{(n)}_i(x)=d(p_i,x)^2+o(1)$, so
$$
\begin{aligned}
\frac 1n\log||\phi^{(n)}_i||_{L^1(\mu_0)}
&\le
\frac 1n\log||e^{-n(d(p_i,x)^2+o(1))}||_{L^1(\mu_0)}=O(1/n).
\end{aligned}
$$
Since $i $  was arbitrary, $C_n=O(1/n)$ and we are done.
\end{proof}

\begin{claim}
\label{Legasympclaim}
$\lim_{k\ra \infty } \frac 1k\log \int e^{k(d(x,y)^2-f(x))}dx=f^\star(y)$.
\end{claim}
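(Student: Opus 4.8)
\medskip
\noindent
\emph{Proof plan.}\
The identity is an instance of \emph{Laplace's principle}: if $Z$ is a compact metric space carrying a finite Borel measure $dx$ that is positive on every nonempty open set (Lebesgue, equivalently Haar, measure on the torus $\TT=\RR^d/\ZZ^d$ has this property), and $g\colon Z\to\RR$ is continuous, then
$$
\lim_{k\to\infty}\frac1k\log\int_Z e^{kg(x)}\,dx=\sup_{x\in Z}g(x).
$$
I would apply this with $Z=\TT$, which is compact, and with $g(x):=d(x,y)^2-f(x)$: this $g$ is continuous and bounded, since $d(\,\cdot\,,y)^2$ is bounded by $(\diam\TT)^2$ and $f$ is continuous on a compact space, so $\sup_Z g=\sup_x[d(x,y)^2-f(x)]$ is finite and is exactly the transform $f^\star(y)$ in the statement. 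Thus the claim reduces to the displayed Laplace asymptotic.

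To prove that asymptotic I would use its two elementary halves. For the upper bound, $g\le\sup_Z g$ pointwise gives $\int_Z e^{kg}\,dx\le e^{k\sup g}\,\vol(Z)$, hence $\tfrac1k\log\int_Z e^{kg}\,dx\le \sup g+\tfrac1k\log\vol(Z)$, and the last term tends to $0$ (here $0<\vol(Z)<\infty$). For the lower bound, compactness of $Z$ and continuity of $g$ give a point $x_0$ with $M:=g(x_0)=\sup_Z g$; fixing $\eps>0$, continuity yields an open neighbourhood $U\ni x_0$ on which $g>M-\eps$, and $\vol(U)>0$ because $dx$ charges nonempty open sets, so $\int_Z e^{kg}\,dx\ge\int_U e^{kg}\,dx\ge e^{k(M-\eps)}\vol(U)$ and therefore $\tfrac1k\log\int_Z e^{kg}\,dx\ge M-\eps+\tfrac1k\log\vol(U)\to M-\eps$. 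Combining the two bounds and letting $\eps\to0$ gives $\lim_k\tfrac1k\log\int_Z e^{kg}\,dx=M$, which is the claim.

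There is no genuine obstacle here — this is the simplest form of Laplace's method — but two small points deserve attention: one must know the reference measure assigns positive mass to open sets (so the localized lower bound is not vacuous), and that $d(\,\cdot\,,y)^2-f$ is bounded on the compact torus (so that $f^\star(y)$ is finite and the two-sided estimate actually pins down the limit rather than giving $+\infty$ on both sides). Finally, although the statement is pointwise in $y$, the same argument shows the convergence is uniform in $y$: the modulus of continuity of $(x,y)\mapsto d(x,y)^2-f(x)$ is uniform over the compact product $\TT\times\TT$, and on the torus balls of a fixed radius have equal volume, so both the size of the neighbourhood $U$ and the correction terms $\tfrac1k\log\vol(\,\cdot\,)$ can be chosen independently of $y$. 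This uniform version is what is actually invoked when Claim~\ref{Legasympclaim} is combined with Claim~\ref{phiclaim} (whose $o(1)$ is likewise uniform in $i$ and $x$) inside the proof of Lemma~\ref{ldpHnLemma}.
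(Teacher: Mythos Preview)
Your proof is correct and is essentially the same as the paper's: the paper invokes the classical fact $\lim_{k\to\infty}\|F\|_{L^k(X,\mu)}=\|F\|_{L^\infty(X)}$ for continuous $F$ on compact $X$ with probability $\mu$, applied to $F=e^{d(\cdot,y)^2-f(\cdot)}$, which is exactly Laplace's principle rephrased; you simply spell out the standard two-sided estimate behind that fact. Your additional remark on uniformity in $y$ is a welcome clarification that the paper leaves implicit.
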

\begin{proof}
Of course, $\lim_{k\ra \infty }||F||_{L^k(X,\mu)}=||F||_{L^\infty(X)}$ for
continuous $F$ and compact $X$ and probability $\mu$. By definition, 
$\sup_x[d(x,y)^2-f(x)]=f^\star(y)$. So,
$$
\begin{aligned}
\lim_{k\ra \infty } \frac 1k\log \int e^{k(d(x,y)^2-f(x))}dx
&=
\lim_{k\ra \infty } \log ||e^{d(x,y)^2-f(x)}||_{L^k(dx)}
\cr
&=
\log ||e^{d(x,y)^2-f(x)}||_{L^\infty}
\cr
&=
||d(x,y)^2-f(x)||_{L^\infty}
\cr
&=
f^\star(y),
\end{aligned}
$$
as desired.
\end{proof}

The alternative proof of Theorem \ref{hultthm} (that is actually the original proof in \cite{Hult}) is now a consequence. The point is that once we know there is a large deviation principle for $\beta\ra\infty$ we can use Proposition \ref{ldpprop}
and Sanov's Corollary \ref{SanovCor} to deduce the convergence in Lemma \ref{HnWLemma} in an argument which provide a formal converse of Lemma \ref{EHnLemma} above, valid in the $\beta\ra\infty$ case (see Lemma 4.9 in \cite{Berm} or the proof of Theorem 3.2 in \cite{Hult}). After that we get the LDP in Theorem \ref{hultthm} by applying Lemma 
\ref{HnWLemma} as above.

\section*{Acknowledgments}

Many thanks to R. Berman, J. Hultgren, O. Zeitouni, and S. Zelditch for many discussions
on these topics and to them and to the referees for many detailed 
comments/corrections on this manuscript, 
to them and I. Cheltsov for the encouragement to write these notes,
and to J. Hultgren also for enlightening lectures at UMD on \cite{Hult} in September 2017.
Finally, it is a real privilege to dedicate this article to Steve Zelditch whose theorems and vision have been nothing short of pioneering both in complex geometry and in its
relations to probability and microlocal analysis, and whose guidance, generosity, and encouragement
have been invaluable in my own pursuits over the years. 
This work was supported by NSF grants DMS-1515703,1906370,
 and a Sloan Research Fellowship.

\def\listing#1#2#3{{\sc #1}:\ {\it #2}, \ #3.}

\vspace{.5 cm}
\noindent

{\sc University of Maryland}

{\tt yanir@alum.mit.edu}


\begin{thebibliography}{9999}

\bi{AmbGig}L.  Ambrosio, N. Gigli, A user's guide to optimal transport, 
version downloaded in 2011 from {\tt https://cvgmt.sns.it/paper/195/}


\bi{Bakry}D. Bakry, 
L'hypercontractivit\'e et son utilisation en th\'eorie des semigroupes, in
Lectures on Probability Theory (Saint-Flour, 1992), Lecture Notes in Math. 1581, 
1994, pp. 1--114.

\bi{BakryLedoux}D. Bakry and M. Ledoux, 
Sobolev inequalities and Myers's diameter
theorem for an abstract Markov generator, Duke Math. J. 85 (1996), 253--270. 

\bi{Berm}
R. Berman, Statistical mechanics of permanents, real Monge--Ampe\`re equations and optimal transport, preprint, arxiv:1302.4045.

\bi{BermanCMP}
R. Berman, 
Large deviations for Gibbs measures with singular
Hamiltonians and emergence of K\"ahler--Einstein metrics, Comm. Math. Phys.
354 (2017), 1133--1172. 


\bi{Chung}K.L. Chung, Green, Brown, and Probability and Brownian motion
on the line, World Scientific, 2002.

\bi{Delarue}
F. Delarue,
Probabilistic approach to regularity, in:
Complex Monge--Amp\`ere equations and 
geodesics in the space of K\"ahler metrics
(V. Guedj, Ed.), Lecture Notes in Math. {\bf 2038}, 2011, pp. 55--198.

\bi{DemboZ}
A. Dembo, O. Zeitouni, Large deviation techniques and applications, 2nd Ed.,
Springer, 1998.


\bi{Don}S.K. Donaldson, 
Scalar curvature and projective embeddings I, J. Differential Geom.
59 (2001), 479--522.


\bi{EllisHavenTurk}
R.S. Ellis, K. Haven, B. Turkington, Large Deviation Principles and
complete equivalence and nonequivalence results for pure and mixed ensembles,
J. Statist. Phys. 101 (2000),  999--1064.

\bi{FengZeld}R. Feng, S. Zelditch, Large deviations for zeros of $P(\phi)_2$ random polynomials, J. Stat. Phys. { 143} (2011), 619--635.

\bi{Gav1}
B. Gaveau,
M\'ethodes de contr\^ole optimale en analyse complexe. I. R\'esolution
d'\'equations de Monge--Amp\`ere, J. Functional Anal.
25 (1977), 391--411.

\bi{Gav2}
B. Gaveau,
M\'ethodes de contr\^ole optimale en analyse complexe. II. 
\'Equation de Monge--Amp\`ere dans certains domaines faiblement 
pseudoconvexes, Bull. Soc. Math. France {\bf 102} (1978), 101--128.

\bi{Gav3}
B. Gaveau,
M\'ethodes de contr\^ole optimale en analyse complexe ou
r\'eelle. III. Diffusion
d'\'equations des fonctions plurisousharmoniques ou convexes, J. Functional Anal.
32 (1979), 228--253.


\bi{Gross}L. Gross,
Logarithmic Sobolev inequalities, Amer. J. Math. 97 (1976), 
1061--1083.

\bi{HultThesis}J. Hultgren, Real and complex Monge--Amp\`ere
equations, statistical mechanics and
canonical metrics, Ph.D. Thesis, Chalmers University of Technology
and University of Gothenburg, 2018.

\bi{Hult}J. Hultgren, Permanental Point Processes on Real Tori, Theta Functions and Monge--Amp\'ere Equations, 
Annales Fac. sci. Toulouse: Math\'matiques 28 (2019), 11--65.


\bi{JMR}T. Jeffres, R. Mazzeo, Y.A. Rubinstein, Kahler-Einstein metrics with edge singularities, (with an appendix by C. Li and Y.A. Rubinstein), Annals of Math. 183 (2016), 95--176. 

\bi{KS96}I. Karatzas, S.E. Shreve, 
Brownian motion and Stochastic Calculus (2nd Ed.), Springer, 1991.


\bi{Kry87}
N.V. Krylov, Nonlinear elliptic and parabolic equations of the second order,
D. Reidel Publishing, 1987.

\bi{Kry89}
N.V. Krlylov, Smoothness of the payoff function for a controllable diffusion
process in a domain, Izv. Akad. Nauk SSSR Ser. Mat. 53 (1989), 66--96;
translation in Math. USSR-Izv. 34 (1990), 65--95.

\bi{Kry89b}
N.V. Krylov, Moment estimates for the quasiderivatives, with respect to
the initial data, of solutions of stochastic equations and their application,
Math. USSR-Sb. 64 (1989), 505--526.

\bi{Kry90}
N.V. Krylov,On control of diffusion processes on a surface in Euclidean
space. Math. USSR-Sb. 65 (1990), 185--203.


\bi{PS}D.H. Phong, J. Sturm, The Monge--Amp\`ere operator 
and geodesics in the space of K\"ahler potentials, 
Invent. Math. 166 (2006), 125--149.


\bi{ras} F. Rassoul-Agha, T. Sepp\"al\"ainen, A Course on Large Deviations with an Introduction to Gibbs Measures, Amer. Math. Soc., 2015.

\bibitem{RauchTaylor} J. Rauch, B.A. Taylor, The Dirichlet problem for
the multidimensional Monge-Amp\`ere equation,
Rocky Mountain J. Math. 7 (1977), 345--364.

\bibitem{Rock}
R.~T.~Rockafellar, Convex analysis, Princeton University Press, 1970.

\bi{R08}Y.A. Rubinstein, Some discretizations of geometric 
evolution equations and the Ricci iteration on the space of K\"ahler metrics, Adv. Math. 218 (2008), 1526--1565. 

\bi{R} Y.A. Rubinstein, 
Smooth and singular K\"ahler--Einstein metrics, 
in: Geometric and Spectral Analysis, (P. Albin et al., Eds.), Contemp. Math. 630, Amer. Math. Soc. and 
Centre de Recherches Math\'ematiques, 2014, pp. 45--138. 


\bi{SongZelditch}J. Song, S. Zelditch, Test configurations, large deviations and geodesic rays on toric varieties, Adv. Math. { 229} (2012), 2338--2378.

\bibitem{Villani}
C.~Villani, Topics in optimal transportation, Amer. Math. Soc., 2003.

\bibitem{Villani:oldandnew}
C.~Villani, {Optimal transport}, Springer, 2009.


\bi{ZeitZeld1}O. Zeitouni, S. Zelditch, Large deviations of empirical measures of zeros of random polynomials, Int. Math. Res. Not. (2010) 3935--3992. 

\bi{Zeld2}S. Zelditch, Large deviations of empirical measures of zeros on Riemann surfaces. Int. Math. Res. Not. (2013), 592--664


\bi{Zelditch}
S. Zelditch, Eigenfunctions of the Laplacian on a Riemannian Manifold,
Amer. Math. Soc., 2017.


\end{thebibliography}
\end{document}